\theoremstyle{plain}
\newtheorem*{theorem*}{Theorem}
\newtheorem*{lemma*}{Lemma}
\newtheorem{corollary*}{Corollary}
\newtheorem*{proposition*}{Proposition}
\newtheorem{conjecture*}{Conjecture}
\newtheorem{theorem}{Theorem}[section]
\newtheorem{lemma}[theorem]{Lemma}
\newtheorem{corollary}[theorem]{Corollary}
\newtheorem{proposition}[theorem]{Proposition}
\newtheorem{question}[theorem]{Question}
\newtheorem{ther}{Theorem}
\theoremstyle{remark}
\newtheorem*{remark}{Remark}
\newtheorem*{remarks}{Remarks}
\newtheorem*{definition}{Definition}
\newtheorem*{notation}{Notation}
\newtheorem{example}[theorem]{Example}
\newtheorem{examples}[theorem]{Examples}
\newtheorem*{claim}{Claim}
\theoremstyle{definition}
   \def\Z{\Bbb{Z}} \def\R{\Bbb{R}} 
\def\N{\Bbb{N}}  
    \def\bp{\begin{pmatrix}}
 \def\ep{\end{pmatrix}} \def\bn{\begin{enumerate}} 
   \def\en{\end{enumerate}}
\def\ba{\begin{array}} \def\ea{\end{array}}  
\def\id{\operatorname{id}}   
\def\be{\begin{equation}} \def\ee{\end{equation}}
     \def\fr12{\frac{1}{2}} \def\z12{\Z[\fr12]}
\def\ol{\overline}
\def\epi{\text{\rm epi}}
\newcommand\conv{\operatorname{conv}}
\newcommand\cl{\operatorname{cl}}
\newcommand\bd{\operatorname{bd}}
\newcommand\interior{\operatorname{int}}
\newcommand\graph{\operatorname{graph}}
\newcommand{\abs}[1]{\lvert#1\rvert}
\newcommand{\norm}[1]{\left\lVert#1\right\rVert}
\newcommand{\dom}{\operatorname{dom}}
\newcommand{\trans}{{\operatorname{t}}}
\DeclareMathAlphabet{\mathbf}{OML}{cmm}{b}{it}
\numberwithin{equation}{section}
\begin{document}

\title{Definable Versions of Theorems by Kirszbraun and Helly}

\subjclass[2000]{Primary 03C64; Secondary 
	26B35, 52A35, 52A41, 54C60}

\date{July 2010}
\begin{abstract}
Kirszbraun's Theorem states that every Lipschitz map $S\to\R^n$, where $S\subseteq \R^m$, has an extension to a Lipschitz map $\R^m\to\R^n$ with the same Lipschitz constant. Its proof relies on Helly's Theorem: every family of compact subsets of $\R^n$, having the property that each of its subfamilies consisting of at most $n+1$ sets share a common point, has a non-empty intersection.
We prove versions of these theorems valid for definable maps and sets in arbitrary definably complete expansions of ordered fields.
\end{abstract}

\author{Matthias Aschenbrenner}
\address{University of California, Los Angeles, California, USA}
\email{matthias@math.ucla.edu}

\author{Andreas Fischer}
\address{The Fields Institute, Toronto, Ontario, Canada}
\curraddr{Gymnasium St. Ursula Dorsten, 46282 Dorsten, Germany}
\email{el.fischerandreas@live.de}

\maketitle

\section*{Introduction}

\noindent
Let $L$ be a non-negative real number and let $f\colon S\to \R^n$, $S\subseteq\R^m$, be an $L$-Lipschitz map, i.e., $||f(x)-f(y)||\leq L\,||x-y||$ for all $x,y\in S$. 
It was noted by McShane and Whitney independently (1934) that if $n=1$, then $f$ extends to an $L$-Lipschitz function $\R^m\to\R$.  This immediately implies that for general $n$, there always exists a $\sqrt{n}\,L$-Lipschitz map $F\colon\R^m\to\R^n$ with $F|S=f$.  A seminal result proved by Kirszbraun (1934) shows that in fact, the multiplicative constant $\sqrt{n}$ is redundant: {\it there is an $L$-Lipschitz map $F\colon\R^m\to \R^n$  such that $F|S=f$.}\/ 
This theorem plays an important role in geometric measure theory (see \cite{Federer}) and has been generalized in many ways, e.g., to more general moduli of continuity and arbitrary Hilbert spaces (see \cite[Theorem~1.12]{BenLi}).
The usual proofs of theorems of this kind in the literature employ, in some form or other, the Axiom of Choice. (See, e.g., \cite{BenLi, DGK, Federer, Goebel, Heinonen}.) This prompted Chris Miller to ask:  {\it suppose $f$ as before is semialgebraic; is there a semialgebraic $L$-Lipschitz map $\R^m\to\R^n$ extending $f$?}\/ More generally:
\begin{quote}
{\it Let $\mathfrak R$ be an o-minimal expansion of a real closed ordered field $R$, and let $f\colon S\to R^n$, $S\subseteq R^m$, be definable in $\mathfrak R$ and $L$-Lipschitz \textup{(}where $L\in R$, $L\geq 0$\textup{)}. Does $f$ admit an extension to an $L$-Lipschitz map $R^m\to R^n$ which is definable in $\mathfrak R$?}
\end{quote}
Here and below, ``definable'' means ``definable, possibly with parameters.'' 
Questions like these are of interest since many (but not all \cite{HP}) properties familiar from real analysis and topology hold for sets and functions definable in o-minimal structures, even if the underlying ordered set is different from the real line. See \cite{vdDries-Tame} for this, and basic definitions concerning o-minimal structures. 

It is easy to see that the question above has a positive answer in the case $n=1$ by the McShane-Whitney construction alluded to above (see Proposition~\ref{prop:McShane-Whitney} below) and also if the domain $S$ of $f$ is convex (see Proposition~\ref{prop:Kirszbraun convex}). In this paper we answer Miller's question positively in general. In fact, o-minimality may be replaced by a weaker assumption. For the rest of this introduction, we fix
an expansion $\mathfrak R=(R,0,1,{+},{\,\cdot\,},{<},\dots)$ of a real closed ordered field, and ``definable'' means ``definable in $\mathfrak R$.'' One says that $\mathfrak R$ is {\bf definably complete} if every non-empty definable subset of $R$ which is bounded from above has a least upper bound in $R$. (See Section~\ref{sec:defcompleteness} below for more on this notion.) Our first main result is the following:

\begin{ther}\label{thm:Kirszbraun}
Suppose $\mathfrak{R}$ is definably complete. Let  $L\in R$, $L\geq 0$, and let $f\colon S\rightarrow R^n$, where $S\subseteq R^m$, be a definable $L$-Lipschitz map. Then there exists a definable $L$-Lipschitz map $F\colon R^m\to R^n$ such that $F|S=f$.
\end{ther}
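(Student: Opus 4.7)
The plan is to prove Theorem~A in three stages, paralleling the classical treatment but replacing all uses of compactness and Zorn's lemma with definable-completeness arguments. First I would establish the companion main result of the paper, a definable version of Helly's theorem: for a definable family $\{C_t\}_{t\in T}$ of closed and bounded convex subsets of $R^n$ with the property that every $n+1$ of the $C_t$ share a common point, one has $\bigcap_{t\in T}C_t\neq\varnothing$. The finite-family case follows from Radon's theorem and is purely first-order, so it transfers to $\mathfrak R$ verbatim. The real work is in passing from finite to definable infinite families without compactness, which I would handle by rephrasing the existence of a common point as a sup/inf problem over a suitable one-dimensional definable parameter and then invoking definable completeness.

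Granted definable Helly, the next step is to apply it to the family of closed balls
\[
K_s(x) := \overline{B}\bigl(f(s),\,L\|x-s\|\bigr) \qquad (s\in S)
\]
for each fixed $x\in R^m$. The classical Kirszbraun--Jung inequality says that for any $n+1$ source points $s_0,\dots,s_n\in S$ the pairwise Lipschitz bounds already force $\bigcap_{i=0}^{n}K_{s_i}(x)\neq\varnothing$; its proof is elementary Euclidean geometry over an ordered field and so remains valid in $\mathfrak R$. Definable Helly then yields $K(x):=\bigcap_{s\in S}K_s(x)\neq\varnothing$ for every $x\in R^m$, and since $K(s_0)=\{f(s_0)\}$ when $s_0\in S$, any subsequent definable selection from $K(\cdot)$ will automatically agree with $f$ on $S$.

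The final stage is to pick $F(x)\in K(x)$ canonically and to verify that $F$ is $L$-Lipschitz. I would take $F(x)$ to be the Chebyshev center of the non-empty closed bounded convex definable set $K(x)$, i.e.\ the center of the unique smallest closed ball in $R^n$ containing $K(x)$; definable completeness yields both existence of the defining infimum and definability of the resulting map $F\colon R^m\to R^n$. By the classical $1$-Lipschitz continuity of the Chebyshev center in Hausdorff distance on non-empty closed bounded convex subsets of $R^n$, it suffices to show the Hausdorff estimate $d_{\mathrm{Haus}}(K(x),K(y))\leq L\|x-y\|$. This in turn reduces, via one more application of definable Helly to the augmented ball family $\{K_s(y):s\in S\}\cup\{\overline{B}(p,L\|x-y\|)\}$ for each fixed $p\in K(x)$, to the pairwise triangle-inequality bound $\|f(s)-p\|\leq L\|x-s\|\leq L\|y-s\|+L\|x-y\|$ together with a final Kirszbraun--Jung computation for the $(n+1)$-wise intersections.

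The main obstacle I expect is the definable Helly theorem itself: the classical proof for infinite families leans crucially on topological compactness of closed and bounded subsets of $R^n$, and this has no direct analogue in an arbitrary definably complete expansion of a real closed field. Simulating compactness via one-dimensional definable-completeness arguments is the technical heart of the project; once it is in place the remainder of the proof of Theorem~A is, as sketched above, a careful but essentially first-order unwinding of the classical geometric proof of Kirszbraun's theorem.
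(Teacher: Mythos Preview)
Your plan correctly identifies definable Helly as a key ingredient, and the Hausdorff estimate $d_H(K(x),K(y))\leq L\,\|x-y\|$ can indeed be established along the lines you sketch. The genuine gap is in the final stage: the assertion that the Chebyshev center is $1$-Lipschitz with respect to the Hausdorff metric on nonempty closed bounded convex subsets of $R^n$ is not a ``classical'' fact---for $n\geq 2$ it is false. In fact it is known that for $n\geq 2$ there is \emph{no} selection $s$ from nonempty compact convex subsets of $\mathbb{R}^n$ to $\mathbb{R}^n$ with $s(K)\in K$ and $\|s(K)-s(K')\|\leq d_H(K,K')$ for all $K,K'$ (the optimal Lipschitz constant for any such selector, realized by the Steiner point, grows with $n$). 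Hence no canonical choice of point in $K(x)$ can be proved $L$-Lipschitz from the Hausdorff bound alone; one would have to exploit the special structure of the family $\{K(x)\}_x$, and it is unclear how to do that definably. The obstacle you flag---the infinite definable Helly theorem---is real, but the selection step you pass over is a second, equally serious, obstacle.

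This is exactly why the paper takes a different route. After proving definable Helly, the paper uses it only to show that a definable firmly non-expansive map can be extended by \emph{one} point. The global extension is then produced not by iterating one-point extensions but via convex analysis: reduce to $m=n$ and to non-expansive $f$, pass to the firmly non-expansive $g=\tfrac12(\id+f)$ and then to the monotone set-valued map $T=g^{-1}-\id$. One constructs a definable \emph{maximal} monotone extension $\overline{T}$ of $T$ by forming the Fitzpatrick function $\Phi_T$, taking the proximal average $\Psi_T$ of $\Phi_T$ and $\Phi_T^{*\trans}$ (shown to be autoconjugate), and letting $\overline{T}$ be the operator represented by $\Psi_T$. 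The one-point extension result enters only at the very end, to show that maximality of $\overline{T}$ forces $(\overline{T}+\id)^{-1}$ to be a firmly non-expansive map defined on all of $R^n$; unraveling gives the desired $F$. The monotone-operator/proximal-average machinery is precisely what replaces both Zorn's lemma and any would-be Lipschitz selector.
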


It turns out that definable completeness is indeed necessary for the conclusion of Theorem~\ref{thm:Kirszbraun} to hold, see Proposition~\ref{prop:def completeness necessary} below. The extension $F$ of $f$ in the theorem can additionally be chosen to depend uniformly on parameters, see Corollary~\ref{cor:uniform Kirszbraun}.

The proof of Theorem~\ref{thm:Kirszbraun} is based on a recent constructive approach to Kirsz\-braun's Theorem due to Bauschke and Wang \cite{Bauschke1, Bauschke2} using the proximal average of convex functions. 
This is the culmination of a long train of thought (going back at least to Minty \cite{Minty}) relating Lipschitz maps to monotone set-valued maps.
It is remarkable that the arguments of loc.~cit.~may be transferred in a straightforward way to the setting of definable complete expansions of ordered fields, with the exception of an interesting property of definable families: In general,  a family $\mathcal C$ of closed balls in $R^n$ with the finite intersection property may have empty intersection; however (and perhaps, somewhat surprisingly), if $\mathfrak R$ is definably complete and the family $\mathcal C$ is definable, then $\bigcap\mathcal C\neq\emptyset$. More precisely, we have the following result:

\begin{ther}\label{thm:Helly}
Suppose $\mathfrak R$ is definably complete.
Let $\mathcal C$ be a definable family of closed bounded convex subsets of $R^n$. If any collection of at most $n+1$ sets from $\mathcal C$ has a non-empty intersection, then $\mathcal C$ has a non-empty intersection.
\end{ther}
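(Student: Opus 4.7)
The plan is to combine the classical \emph{finite} Helly theorem---which over any real closed ordered field reduces, via Radon's partition theorem, to the solvability of an affine linear system---with a variational argument that extracts a point of $\bigcap\mathcal C$ as the minimizer of a sup-distance function, using definable completeness in the form that continuous definable functions on closed, bounded, definable sets attain their extrema.

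First I would show that every finite subfamily of $\mathcal C$ has non-empty intersection, upgrading the $(n{+}1)$-intersection hypothesis to the finite intersection property; the standard derivation from Radon's partition theorem is purely algebraic and transfers to $R$ verbatim. Next I reduce to a bounded domain: fix $t_0\in T$, write $K_0:=C_{t_0}$, and replace each $C_t$ by $C_t\cap K_0$. This preserves the finite intersection property, and all sets in the new family lie in the bounded convex set $K_0$. Take $K:=\{x\in R^n:\dist(x,K_0)\le D\}$ with $D>\operatorname{diam}(K_0)$, and define
\[
g\colon K\longrightarrow R,\qquad g(x):=\sup_{t\in T}\dist(x,C_t)^2.
\]
This is well defined and bounded on $K$ (the pointwise supremum exists by definable completeness). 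Since $\sqrt g$ is a supremum of $1$-Lipschitz maps, $g$ is continuous, hence attains a minimum value $\mu\ge 0$ at some $x^\ast\in K$. The choice $D>\operatorname{diam}(K_0)$ forces $x^\ast$ into the interior of $K$, since $g\le\operatorname{diam}(K_0)^2$ on $K_0$ but $g\ge D^2$ on $\bd K$.

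The heart of the argument is to show $\mu=0$. Write $y_t:=\pi_{C_t}(x^\ast)$ for the unique nearest point in $C_t$ to $x^\ast$, and $v_t:=y_t-x^\ast$, so that $\|v_t\|^2=\dist(x^\ast,C_t)^2\le\mu$. Assume for contradiction $\mu>0$, and fix $\eps\in(0,\mu)$. If there existed $w\in R^n$ with $\|w\|=1$ and $\alpha>0$ such that $\langle w,v_t\rangle\ge\alpha$ whenever $\|v_t\|^2>\mu-\eps$, then a direct computation would show $g(x^\ast+\delta w)<\mu$ for sufficiently small $\delta>0$, contradicting the minimality of $x^\ast$. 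By the separating hyperplane theorem over $R$, the non-existence of such $w$ implies $0$ belongs to the closed convex hull of $\{v_t:\|v_t\|^2>\mu-\eps\}$, and then by Carathéodory's theorem over $R$ one can find, with arbitrarily small error, indices $t_1,\dots,t_{n+1}\in T$ with $\|v_{t_i}\|^2\ge\mu-\eps$ and convex coefficients $\lambda_i\ge 0$, $\sum_i\lambda_i=1$, such that $\left\|\sum_i\lambda_iv_{t_i}\right\|$ is as small as we please. The $(n{+}1)$-intersection hypothesis supplies $z\in\bigcap_{i=1}^{n+1}C_{t_i}\subseteq K_0$. The standard projection inequality $\langle v_{t_i},z-y_{t_i}\rangle\ge 0$ rearranges to $\langle v_{t_i},z-x^\ast\rangle\ge\|v_{t_i}\|^2\ge\mu-\eps$; summing with weights $\lambda_i$ and applying Cauchy--Schwarz,
\[
\mu-\eps\le\left\langle\sum_i\lambda_iv_{t_i},\,z-x^\ast\right\rangle\le\left\|\sum_i\lambda_iv_{t_i}\right\|\cdot\|z-x^\ast\|,
\]
and the right-hand side can be made arbitrarily small by the choice of the $(t_i,\lambda_i)$. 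Hence $\mu\le\eps$; since $\eps\in(0,\mu)$ was arbitrary, $\mu\le 0$, the desired contradiction. Therefore $\mu=0$ and $x^\ast\in\bigcap\mathcal C$.

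The main obstacle is the \emph{closure gap}: the pointwise supremum defining $g(x^\ast)=\mu$ need not be attained by any single $t\in T$, which is why the ``$\eps$-active set'' device and a soft version of Carathéodory must replace the cleaner pick of $n+1$ worst-case indices one would have in the compact setting. Beyond this, one must verify that the standard convex-analytic toolkit---metric projections onto closed bounded convex sets, the separating hyperplane theorem, Carathéodory's theorem, and the attainment of extrema of continuous definable functions on definably compact sets---is available in any definably complete expansion of a real closed ordered field. Each is either purely algebraic or a direct consequence of definable completeness, so this part is bookkeeping rather than a genuine new difficulty.
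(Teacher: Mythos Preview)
Your proposal is correct and follows essentially the same variational argument as the paper: minimize the sup-distance function, assume the minimum $\mu>0$, show that $0$ (equivalently, $x^\ast$) lies in the closed convex hull of the $\varepsilon$-active projection vectors, extract $n{+}1$ indices via Carath\'eodory, and derive a contradiction from the projection inequality and Cauchy--Schwarz. The only cosmetic difference is that the paper phrases the ``$0$ lies in the closed convex hull'' step by moving toward the metric projection $p(x_0,C)$ (packaged as a separate lemma giving a uniform gap $\delta$), whereas you phrase it contrapositively via the separating-hyperplane theorem; the content is identical.
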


This theorem is a definable analogue of a classical theorem of Helly (1913) on families of compact convex subsets of $\R^n$. In the standard proofs of this theorem (e.g., as given in \cite{Webster}), one first reduces to the case of a finite family by a topological compactness argument, which is unavailable in the more general context considered here. Thus we were forced to find a different proof which adapts to infinite definable families. (See \cite{DGK, Eckhoff} for the history and numerous variants of Helly's Theorem.)

Note that the theorem fails trivially if the assumptions ``closed'' or ``bounded'' are dropped, as suitable definable families of intervals in $R$ show. 
Definable completeness of $\mathfrak R$ is also necessary in this case:  if $\mathfrak R$ has the property that every infinite definable family of closed bounded convex subsets of $R$ with empty intersection contains two disjoint members,  then $\mathfrak R$ is definably complete. 
It may also be worth noting that the natural definable analogue of the Heine-Borel Theorem (a definable set $S\subseteq R^n$ is closed and bounded if and only if every definable family of closed subsets of $S$ with the finite intersection property has a non-empty intersection) fails if the ordered field $R$ is non-archimedean. (See Section~\ref{sec:related results}.)

\subsection*{Organization of the paper}
Many of the basic properties of convex sets in $\R^n$ (as presented in, say, \cite{Schneider,Webster}) hold in the setting of a definably complete expansion of an ordered field, provided attention is restricted to definable convex sets.  After a preliminary Section~\ref{sec:Preliminaries}, we develop some of these properties in Section~\ref{sec:convex}, restricting ourselves to what is necessary for the proof of Theorems~\ref{thm:Kirszbraun} and \ref{thm:Helly}. We give the proof of Theorem~\ref{thm:Helly} and some applications of this theorem in Section~\ref{sec:Helly proof}.  In Section~\ref{sec:related results} we also present another proof of Theorem~\ref{thm:Helly} valid in the case where $\mathfrak R$ is o-minimal, due to S.~Starchenko (and based on results by Dolich and Peterzil-Pillay). In Section~\ref{sec:convex analysis} we  establish a few basic results of convex analysis in the definably complete setting, and in Section~\ref{sec:Kirszbraun proof} we prove Theorem~\ref{thm:Kirszbraun}. In Section~\ref{sec:Variants} we discuss some variants of Theorem~\ref{thm:Kirszbraun}: a weak version of Kirszbraun's Theorem for Lipschitz maps which are locally definable in expansions of the ordered field of real numbers, and the extension problem for uniformly continuous definable maps.

\subsection*{Acknowledgments}
We thank Chris Miller for many discussions around the topics of this paper, and Sergei Starchenko for permission to include the argument in Section~\ref{sec:related results}. This paper was partially written while both authors were participating in the thematic program on O-minimal Structures and Real Analytic Geometry at the Fields Institute in Toronto in 2009. The support of this institution is gratefully acknowledged. The first author was also partially supported by a grant from the National Science Foundation.

\subsection*{Conventions and notations} We let $k$, $m$, $n$, range over the set $\N=\{0,1,2,\dots\}$ of natural numbers. ``Definable'' means ``definable, possibly with parameters.''

Let $R$ be a real closed ordered field. We equip $R$ with the order topology, and each $R^n$ with the corresponding product topology. Given a subset $S$ of $R^n$ we write $\interior(S)$ for the interior, $\cl(S)$ for the closure, and $\bd(S)=\cl(S)\setminus\interior(S)$ for the boundary of $S$. We write the dot product of $x=(x_1,\dots,x_n)\in R^n$ and $y=(y_1,\dots,y_n)\in R^n$ as
$$\langle x,y\rangle = x_1y_1+\cdots+x_ny_n,$$
and we set
$||x|| := \sqrt{ \langle x,x \rangle }$. For $\varrho>0$ and $x\in R^n$ we write
$$B_\varrho(x):=\big\{y\in R^n: ||x-y||<\varrho\big\}, \qquad
  \ol{B}_\varrho(x):=\big\{y\in R^n: ||x-y||\leq\varrho\big\}$$
for the open respectively closed ball in $R^n$ with radius $\varrho$ and center $x$. A set $S\subseteq R^n$ is said to be bounded if $S\subseteq B_\varrho(0)$ for some $\varrho>0$.

For $a,b\in R$ we put $[a,b]:=\{x\in R:a\leq x\leq b\}$. For $S\subseteq R$ and $a\in R$ we set
$S^{>a}:=\{r\in S:r>a\}$ and similarly with other inequality symbols in place of ``$>$.''
We extend the linear ordering of $R$ to a linear ordering of $R_{\pm\infty}=R\cup \{-\infty,+\infty\}$ such that $-\infty<R<+\infty$. 
We assume the usual rules for addition and multiplication with $\pm\infty$. 
We also set $R_\infty=R\cup\{+\infty\}$.
We say that a function $f\colon S\to R_{\pm\infty}$ (where $S\subseteq R^n$) is {\bf finite} at $x\in S$ if $f(x)\in R$, and we simply say that $f$ is finite if it is finite at every $x\in S$.

\section{Preliminaries}\label{sec:Preliminaries}

\noindent
This section contains material which is fundamental for the following sections. In Sections~\ref{sec:defcompleteness} and \ref{sec:def BW} we collect basic properties of definably complete expansions of ordered fields. In Section~\ref{sec:Lipschitz} we discuss Lipschitz maps, and Section~\ref{sec:Minkowski} contains a useful fact about Minkowski sums of closed sets.

\subsection{Definable completeness}\label{sec:defcompleteness}
Let $\mathfrak R$ be an expansion of an ordered field $R$.
One says that $\mathfrak R$ is {\bf definably complete} if every non-empty definable subset of $R$ which is bounded from above has a least upper bound in $R$. Clearly then every non-empty definable subset of $R$ which is bounded from below has a greatest lower bound in $R$. Moreover, if $\mathfrak R$ is definably complete, then the field $R$ is necessarily real closed. For a proof of this fact see \cite{Miller}, where further
basic properties of definably complete structures were developed. In particular, the following characterization of definable completeness is proved in \cite[Corollary~1.5]{Miller}. 

\begin{proposition}
The following are equivalent:
\begin{enumerate}
\item $\mathfrak R$ is definably complete.
\item Every continuous definable function $f\colon [a,b]\to R$ has the intermediate value property: for each $y\in R$ between $f(a)$ and $f(b)$ there is some $x\in [a,b]$ with $y=f(x)$.
\item Intervals in $R$ are definably connected.
\item $R$ is definably connected.
\end{enumerate}
\end{proposition}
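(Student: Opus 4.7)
The plan is to establish the cycle $(1)\Rightarrow(2)\Rightarrow(3)\Rightarrow(4)\Rightarrow(1)$. For $(1)\Rightarrow(2)$ I would run the classical supremum argument: given continuous definable $f\colon[a,b]\to R$ with, say, $f(a)<y<f(b)$, consider the definable set $S:=\{x\in[a,b]:f(x)<y\}$; it is non-empty (it contains $a$) and bounded above by $b$, so by (1) it has $c:=\sup S\in R$. Continuity of $f$ at $c$ rules out both $f(c)<y$ (which would force some right neighborhood of $c$ to lie in $S$, contradicting that $c$ is an upper bound of $S$) and $f(c)>y$ (which would give a left neighborhood of $c$ disjoint from $S$, contradicting the minimality of $c$), so $f(c)=y$.

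For $(2)\Rightarrow(3)$ I would argue by contradiction: if some interval $I\subseteq R$ decomposes as $I=U\sqcup V$ with $U,V$ definable, non-empty, and relatively open in $I$, pick $a\in U$ and $b\in V$; assuming $a<b$, define $f\colon[a,b]\to R$ by $f=0$ on $U\cap[a,b]$ and $f=1$ on $V\cap[a,b]$. Since $[a,b]\subseteq I$ and each of the two pieces is relatively open in $[a,b]$, the function $f$ is continuous; it is plainly definable, and $f(a)=0$, $f(b)=1$, but $1/2$ is never attained, violating (2). The implication $(3)\Rightarrow(4)$ is immediate, since $R$ itself is an interval.

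For $(4)\Rightarrow(1)$ I would argue contrapositively. Assume some non-empty definable $S\subseteq R$ is bounded above but has no supremum, and set
\[
V:=\{x\in R:x\text{ is an upper bound of }S\},\qquad U:=R\setminus V=\{x\in R:\exists s\in S,\ s>x\}.
\]
Both $U$ and $V$ are definable, and they are non-empty by the assumption that $S$ is non-empty and bounded above, respectively. Openness of $U$ is straightforward: for $x\in U$ any witness $s\in S$ with $s>x$ gives an open interval around $x$ contained in $U$. The crucial step is openness of $V$: by the no-supremum hypothesis $V$ has no least element, so for each $x\in V$ there is some $y\in V$ with $y<x$, and then $(y,\infty)\subseteq V$ is an open neighborhood of $x$. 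Thus $R=U\sqcup V$ is a definable disconnection of $R$, contradicting (4). This openness of $V$ is the one place where the precise form of the hypothesis (absence of a supremum rather than mere boundedness) is used in an essential way, and is therefore the main, if modest, obstacle in the proof.
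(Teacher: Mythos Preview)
Your argument is correct. The cycle $(1)\Rightarrow(2)\Rightarrow(3)\Rightarrow(4)\Rightarrow(1)$ goes through as written; each step uses only elementary properties of ordered fields together with the definition of definable connectedness recalled immediately after the proposition. A small remark on $(2)\Rightarrow(3)$: the paper's definition of definable connectedness is phrased in terms of definable open $U,V\subseteq R$ (not relatively open subsets of $I$), but your argument adapts immediately, since for such $U,V$ the sets $U\cap[a,b]$ and $V\cap[a,b]$ are open in $[a,b]$ and your characteristic function is continuous for exactly that reason.

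As for comparison with the paper: the paper does not actually prove this proposition. It is stated with the attribution ``proved in \cite[Corollary~1.5]{Miller}'' and no argument is given. Your self-contained proof is therefore a genuine addition rather than a reworking of something already present, and the approach you take (the classical supremum argument for the IVP, a two-valued continuous function to witness failure of the IVP from a disconnection, and the upward/downward closure of non-upper-bounds/upper-bounds for the final step) is the standard one and presumably close to what Miller does.
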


(Recall that a set $S\subseteq R^n$ is said to be definably connected if for all definable open sets $U,V\subseteq R^n$ with $S=(S\cap U)\cup (S\cap V)$ and $S\cap U\cap V=\emptyset$, we have $S\subseteq U$ or $S\subseteq V$.)

\medskip

The notion of
definable completeness is intended to capture the first-order content of Dedekind completeness: indeed, every expansion of the ordered field of real numbers is definably complete, and every structure elementarily equivalent to a definably complete structure is definably complete \cite[Section~3]{Miller}. 
Definable completeness is connected to o-minimality:
If $\mathfrak R$ is o-minimal, then $\mathfrak R$ is definably complete. (In fact, it is enough to require that the open core $\mathfrak R^\circ$ of $\mathfrak R$ is o-minimal.) If $\mathfrak R$ is o-minimal, and $R'$ is a proper dense subset of $R$ which is the underlying set of an elementary substructure of $\mathfrak R$, then $(\mathfrak R,R')$ is definably complete, by \cite{vdDries-Dense}.
However, definable completeness is sufficiently far removed from o-minimality to warrant independent interest:  by results in \cite{DM, MS}, $\mathfrak R$ is o-minimal if and only if
$\mathfrak R$ is definably complete, every definable subset of $R$ is constructible (i.e., a finite boolean combination of open sets), and there is no definable subset of $R$ which is both infinite and discrete.

\medskip

{\it In the rest of this section we assume that $\mathfrak R$ is definably complete.}

\begin{notation}
We say that $A\subseteq R_{\pm\infty}$ is definable if $A\cap R$ is definable. With this convention, every non-empty definable subset $A$ of  $R_{\pm\infty}$ has a least upper bound in $R_{\pm\infty}$, which we denote by $\sup A$, and $A$ has a greatest lower bound in $R_{\pm\infty}$, denoted by $\inf A$.
We also set $\sup\emptyset:=-\infty$ and $\inf\emptyset:=+\infty$. 
\end{notation}

We have a weak version of definable choice \cite[Proposition~1.8]{Miller}:

\begin{lemma}\label{lem:defchoice}
Let $\mathcal C=\{C_a\}_{a\in A}$, where $A\subseteq R^n$, be a definable family of non-empty closed and bounded subsets of $R^m$. Then there is a definable map $f\colon A\to R^m$ such that $f(a)\in C_a$ for every $a\in A$.
\end{lemma}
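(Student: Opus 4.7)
My plan is to proceed by induction on $m$. The base case $m = 1$ is immediate: for each $a \in A$, definable completeness yields $\alpha(a) := \inf C_a \in R$, and closedness of $C_a$ forces $\alpha(a) \in C_a$; the assignment $f(a) := \alpha(a)$ is clearly definable and meets the requirement.

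For the inductive step with $m \geq 2$, I would construct $f$ coordinate by coordinate. Let $\pi_1 \colon R^m \to R$ denote projection onto the first coordinate. For each $a$, the set $\pi_1(C_a) \subseteq R$ is non-empty and bounded, so $\alpha(a) := \inf \pi_1(C_a) \in R$ exists by definable completeness and depends definably on $a$. Granting the key sub-claim that $\alpha(a) \in \pi_1(C_a)$, the sets $C'_a := C_a \cap \pi_1^{-1}(\alpha(a))$ form a definable family of non-empty closed bounded subsets of $R^m$; identifying each $C'_a$ with a definable subset of $R^{m-1}$ via its last $m - 1$ coordinates, the inductive hypothesis supplies a definable map $g \colon A \to R^{m-1}$ with $(\alpha(a), g(a)) \in C_a$. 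Setting $f(a) := (\alpha(a), g(a))$ completes the step.

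The main obstacle is the sub-claim that $\alpha(a)$ is attained in $\pi_1(C_a)$: this is a definably complete version of the extreme value theorem for the continuous definable function $x \mapsto x_1$ on the closed bounded definable set $C_a$, and it is not automatic from the one-dimensional case, since the projection of a closed bounded definable set in $R^m$ need not a priori be closed in $R$. I would establish it by a parallel induction on $m$ using definable completeness: fix $a$, write $\alpha := \alpha(a)$, and consider the decreasing definable family of non-empty closed bounded sets $K_t := C_a \cap \pi_1^{-1}\bigl([\alpha, \alpha + t]\bigr)$ for $t > 0$. For each $j \in \{2, \dots, m\}$, the definable functions $t \mapsto \inf \pi_j(K_t)$ and $t \mapsto \sup \pi_j(K_t)$ are monotone and bounded in $t$, so by definable completeness they admit one-sided limits in $R$ as $t \searrow 0$; these limits assemble into a candidate point $p \in R^m$ with $\pi_1(p) = \alpha$, and the closedness of $C_a$ (together with the inductive hypothesis used to control the remaining coordinates) forces $p \in C_a$, yielding the required element of $C'_a$. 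The delicate point throughout is bootstrapping the one-dimensional form of definable completeness upward across coordinates without invoking topological compactness.
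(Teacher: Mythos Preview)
The paper does not give its own proof of this lemma; it merely cites \cite{Miller}. Your overall inductive strategy---pick out the lexicographically least point of each $C_a$, using the $m=1$ case as the engine---is the standard one and is sound once the sub-claim that $\alpha(a)\in\pi_1(C_a)$ is established. The base case and the reduction of the inductive step to that sub-claim are correct.

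The argument you sketch for the sub-claim, however, has a genuine gap. Taking for each $j\geq 2$ the limit $\ell_j:=\lim_{t\to 0^+}\inf\pi_j(K_t)$ and ``assembling'' the candidate $p=(\alpha,\ell_2,\dots,\ell_m)$ does \emph{not} in general give a point of $C_a$: the coordinates cannot be treated independently. For a concrete failure take $m=3$ and $C_a=\{(0,0,1),(0,1,0)\}$. Then $\alpha=0$, $K_t=C_a$ for every $t>0$, and $\inf\pi_2(K_t)=\inf\pi_3(K_t)=0$, so your candidate is $(0,0,0)\notin C_a$. Your parenthetical about using the inductive hypothesis ``to control the remaining coordinates'' does not repair this; you never say how.

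One way to close the gap: first handle $m=2$ directly. For $x\in\pi_1(C)$ set $g(x):=\min\{y:(x,y)\in C\}$ (this exists by the $m=1$ case), put $\ell(\varepsilon):=\inf\{g(x):x\in\pi_1(C),\ |x-\alpha|\leq\varepsilon\}$ and $\ell:=\lim_{\varepsilon\to 0^+}\ell(\varepsilon)$; an easy $\varepsilon$--$\delta$ argument shows $(\alpha,\ell)\in\cl\{(x,g(x))\}\subseteq C$. For $m\geq 3$, use the inductive hypothesis (defchoice in $R^{m-1}$) to obtain a definable section $g\colon\pi_1(C)\to R^{m-1}$ with $(x,g(x))\in C$, and then iterate the $m=2$ argument coordinate by coordinate on $g=(g_1,\dots,g_{m-1})$: having found $\ell_1$, restrict to those $x$ near $\alpha$ with $g_1(x)$ near $\ell_1$, then extract $\ell_2$ from $g_2$ on this smaller set, and so on. After $m-1$ steps one has $(\alpha,\ell_1,\dots,\ell_{m-1})\in\cl\{(x,g(x))\}\subseteq C$. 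This is what your phrase ``control the remaining coordinates'' ought to unpack to, but the work is nontrivial and is missing from your proposal.
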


Many facts familiar from set-theoretic topology in $\R$ continue to hold for $\mathfrak R$, provided attention is restricted to the definable category. In the following we collect some of those properties. 
The first one \cite[Lemma~1.9]{Miller} (which follows from Lemma~\ref{lem:defchoice}) captures a crucial feature of compact subsets of $\R^n$:

\begin{lemma}\label{lem:monotone}
Let $\mathcal C=\{C_a\}_{a\in A}$, where $A\subseteq R$, be a definable family of non-empty closed bounded subsets of $R^n$ which is monotone, i.e., either $C_a\subseteq C_b$ for all $a,b\in A$ with $a\leq b$, or $C_a\supseteq C_b$ for all $a,b\in A$ with $a\leq b$. Then $\bigcap\mathcal C\neq\emptyset$.
\end{lemma}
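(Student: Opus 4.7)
My plan is to proceed by induction on $n$, after reducing without loss of generality to the decreasing case: replacing the index set $A$ by $-A$ interchanges the two monotonicity hypotheses, so I may assume throughout that $C_a \supseteq C_b$ whenever $a \leq b$.

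For the base case $n = 1$, the candidate common point will be the supremum of the minima. Define $\mu(a) := \min C_a$, which lies in $C_a$ since $C_a$ is a non-empty definable closed subset of $R$ bounded below. Decreasingness forces $\mu\colon A \to R$ to be weakly increasing, and $\mu(a) \leq \max C_{a_0}$ for every fixed $a_0 \in A$ (using $C_a \subseteq C_{a_0}$ when $a \geq a_0$, and monotonicity of $\mu$ when $a \leq a_0$). By definable completeness I set $\alpha := \sup_{a \in A} \mu(a) \in R$. For each fixed $a \in A$, monotonicity of $\mu$ yields $\alpha = \sup\{\mu(b) : b \in A,\ b \geq a\}$, and $\{\mu(b) : b \geq a\} \subseteq C_a$ because $C_b \subseteq C_a$ whenever $b \geq a$; hence $\alpha \in \cl(C_a) = C_a$, so $\alpha \in \bigcap_{a \in A} C_a$.

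For the inductive step, suppose the lemma is known in all dimensions smaller than $n$. Let $\pi\colon R^n \to R^{n-1}$ forget the last coordinate. The family $\{\cl(\pi(C_a))\}_{a \in A}$ is definable, decreasing, and consists of non-empty closed bounded subsets of $R^{n-1}$, so the induction hypothesis in dimension $n-1$ furnishes some $y^* \in \bigcap_a \cl(\pi(C_a))$. The main obstacle is here: $\pi(C_a)$ need not itself be closed (that property is essentially equivalent to the present lemma), so the naive slice $\{t \in R : (y^*, t) \in C_a\}$ may be empty. I bypass this by introducing, for each $\epsilon > 0$ and $a \in A$, the fattened slice
\[
F_a^\epsilon := \cl\bigl\{t \in R : (y, t) \in C_a \text{ for some } y \in B_\epsilon(y^*)\bigr\},
\]
which is definable, closed, and bounded, and which is non-empty precisely because $y^* \in \cl(\pi(C_a))$.

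Fixing $\epsilon$, the family $\{F_a^\epsilon\}_{a \in A}$ is a decreasing definable family of non-empty closed bounded subsets of $R$, so the $n = 1$ case (already proved) gives $G^\epsilon := \bigcap_a F_a^\epsilon \neq \emptyset$; then $\{G^\epsilon\}_{\epsilon > 0}$ is in turn a decreasing family of non-empty closed bounded subsets of $R$, and a second application of the $n=1$ case supplies $t^* \in \bigcap_{a, \epsilon} F_a^\epsilon$. Unwinding the closures finishes the proof: for every $a \in A$ and every $\epsilon, \delta > 0$ there exist $y \in B_\epsilon(y^*)$ and $t' \in R$ with $|t' - t^*| < \delta$ and $(y, t') \in C_a$, so every neighbourhood of $(y^*, t^*)$ meets $C_a$, whence $(y^*, t^*) \in \cl(C_a) = C_a$. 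Since $a$ was arbitrary, $(y^*, t^*) \in \bigcap_a C_a$, completing the induction.
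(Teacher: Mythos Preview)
Your argument is correct. The paper itself does not give a proof of this lemma; it simply cites \cite[Lemma~1.9]{Miller} and remarks that the result follows from the weak definable choice principle (Lemma~\ref{lem:defchoice}). Your route is genuinely different: you avoid definable choice entirely and instead induct on the ambient dimension, using only definable completeness to handle the one-dimensional case and then a ``fattened slice'' trick to lift the result. The advantage of your approach is that it is self-contained and makes the role of closedness completely transparent (each use of a closure is eventually cashed out as membership in $\cl(C_a)=C_a$); the advantage of the cited approach is that once one has Lemma~\ref{lem:defchoice}, the present lemma drops out quickly without an inductive scaffold.

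Two small remarks. First, when you write that $\{G^\epsilon\}_{\epsilon>0}$ is ``decreasing,'' note that with the lemma's convention it is actually \emph{increasing} (larger $\epsilon$ gives larger $G^\epsilon$); this is harmless since the $n=1$ case, once established for one direction of monotonicity, holds for the other by your own $A\mapsto -A$ reduction. Second, it is worth making explicit that $G^\epsilon$ is closed (as an intersection of closed sets), bounded (since $G^\epsilon\subseteq F_{a_0}^\epsilon$ for any fixed $a_0\in A$), and definable (the condition $t\in G^\epsilon$ is first-order in $t,\epsilon$), so that the $n=1$ case genuinely applies to $\{G^\epsilon\}_{\epsilon>0}$.
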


Note that this lemma implies a special case of Theorem~\ref{thm:Helly} for monotone definable families of closed bounded sets (without the assumption of convexity).

\begin{proposition}\label{prop:cbd}
Let $f\colon S\to R^n$ be definable and continuous, where $S\subseteq R^m$. If $S$ is closed and bounded, then so is $f(S)$.
\end{proposition}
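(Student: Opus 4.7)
The plan is to prove boundedness and closedness of $f(S)$ separately, using Lemma~\ref{lem:monotone} as a substitute for sequential compactness (which is not available in this generality, since $R$ may be non-archimedean).

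\textbf{Boundedness.} I would argue by contradiction: suppose $f(S)$ is unbounded. For each $\varrho>0$ consider the definable set
\[
C_\varrho := \bigl\{x\in S : \norm{f(x)}\geq \varrho\bigr\}.
\]
Since $f$ is continuous, $C_\varrho$ is closed in $S$, hence in $R^m$ (as $S$ is closed); it is bounded because $S$ is; and by assumption $C_\varrho \neq \emptyset$ for every $\varrho>0$. The family $\{C_\varrho\}_{\varrho>0}$ is clearly monotone decreasing in $\varrho$, so Lemma~\ref{lem:monotone} yields a point $x\in\bigcap_{\varrho>0}C_\varrho$. But then $\norm{f(x)}\geq\varrho$ for every $\varrho>0$, contradicting $\norm{f(x)}\in R$.

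\textbf{Closedness.} Fix $y\in\cl(f(S))$; I would show $y\in f(S)$. For each $\varepsilon>0$ set
\[
D_\varepsilon := \bigl\{x\in S : \norm{f(x)-y}\leq\varepsilon\bigr\}.
\]
Each $D_\varepsilon$ is a definable, closed (by continuity of $f$) and bounded (subset of $S$) set, and $D_\varepsilon\neq\emptyset$ because $y\in\cl(f(S))$ means $B_\varepsilon(y)\cap f(S)\neq\emptyset$. The family $\{D_\varepsilon\}_{\varepsilon>0}$ is monotone decreasing, so Lemma~\ref{lem:monotone} produces a point $x\in\bigcap_{\varepsilon>0}D_\varepsilon$. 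Then $\norm{f(x)-y}\leq\varepsilon$ for every $\varepsilon>0$, hence $f(x)=y$, and thus $y\in f(S)$.

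There is no real obstacle here beyond recognizing that Lemma~\ref{lem:monotone} is the correct replacement for the usual compactness arguments: the standard proof in $\R^n$ passes to a convergent subsequence, which we cannot do in a non-archimedean setting, but the monotone intersection property provides exactly the ``limit point'' we need in both parts.
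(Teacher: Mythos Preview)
Your proof is correct. The paper does not actually prove this proposition; it merely cites \cite[Proposition~1.10]{Miller}, so there is nothing to compare against directly, but your argument---reducing both boundedness and closedness to the monotone intersection property of Lemma~\ref{lem:monotone}---is exactly the natural route and almost certainly what Miller does as well.
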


This is \cite[Proposition~1.10]{Miller}.
As an immediate consequence, one has:

\begin{corollary}\label{cor:minmax}
Let $f\colon S\to R$ be definable and continuous, where $S\subseteq R^m$ is closed and bounded. Then $f$ achieves a minimum and a maximum on $S$.
\end{corollary}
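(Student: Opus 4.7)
The plan is to deduce this directly from Proposition~\ref{prop:cbd} together with the basic definable completeness axiom, so the argument is essentially a one-liner. We may assume $S\neq\emptyset$ (otherwise the statement is vacuous). The image $f(S)\subseteq R$ is definable (since both $f$ and $S$ are) and by Proposition~\ref{prop:cbd} it is closed and bounded.

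By definable completeness, the non-empty bounded definable set $f(S)$ admits a least upper bound $M\in R$ and a greatest lower bound $m\in R$. The key observation is that $M,m\in f(S)$: indeed, if $M\notin f(S)$, then because $f(S)$ is closed, there would exist $\varepsilon>0$ with $(M-\varepsilon,M+\varepsilon)\cap f(S)=\emptyset$, so $M-\varepsilon$ would be an upper bound of $f(S)$, contradicting the choice of $M$. An analogous argument works for $m$. Pick any $x_1,x_2\in S$ with $f(x_1)=m$ and $f(x_2)=M$; these are the desired minimum and maximum.

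I don't anticipate any obstacle: the two nontrivial ingredients (the image is closed and bounded, and definable completeness supplies sup/inf) are already in hand from the immediately preceding proposition and from the definition. One could alternatively invoke Lemma~\ref{lem:monotone} applied to the monotone definable family $\{f(S)\cap [M-1/k,M]\}$, but the direct closedness argument above is simpler and avoids indexing by $\N$.
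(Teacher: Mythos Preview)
Your proof is correct and is exactly the argument the paper has in mind: the corollary is stated as an immediate consequence of Proposition~\ref{prop:cbd}, and your proposal simply spells out that immediate step (closed bounded image $\Rightarrow$ sup and inf exist by definable completeness and belong to the image by closedness). There is nothing to add.
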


\subsection{Definable Bolzano-Weierstrass Theorem}\label{sec:def BW}
For our investigations it is useful to have a counterpart of the
Bolzano-Weierstrass Theorem  from classical analysis, concerning infinite sequences in compact subsets of $\R^n$. 
In o-minimal geometry, this counterpart is described by the Curve~Selection~Lemma, which is not available in the definably complete situation.
%
%
%
\begin{definition}
Let $\gamma\colon I\to R^n$ be a definable function, where
$I\subseteq R^{>0}$ is unbounded. We call such a function $\gamma$  a {\bf sequence-map}. A sequence-map $\gamma'\colon I'\to R^n$ is said to be a {\bf subsequence-map} of $\gamma$ if $I'\subseteq I$ and $\gamma'=\gamma|I'$.
We say that $\gamma$ {\bf converges} if
$a=\lim\limits_{t\rightarrow \infty,\ t\in I} \gamma(t)$ exists, and in this case, we say that $\gamma$ converges to $a$.
An element $a$ of $R^n$ such that there is a subsequence-map $\gamma'$ of $\gamma$ converging to $a$ is called an {\bf accumulation point} of $\gamma$.
\end{definition}
\begin{proposition}[Definable Bolzano-Weierstrass Theorem]\label{prop:BW}
Let $S\subseteq R^n$ be a closed and bounded definable set.
Then every sequence-map $\gamma\colon I\to S$ has an accumulation point in $S$.
\end{proposition}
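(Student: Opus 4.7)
The plan is in two parts. First, I apply Lemma~\ref{lem:monotone} to obtain a candidate accumulation point. For each $s \in I$, set $C_s := \cl\gamma(\{t \in I : t \geq s\}) \subseteq S$. Each $C_s$ is non-empty (since $I$ is unbounded), closed, and bounded (as a subset of the bounded set $S$); the family $\{C_s\}_{s \in I}$ is definable and monotonically decreasing in $s$. Lemma~\ref{lem:monotone} then yields a point $a \in \bigcap_{s \in I} C_s \subseteq S$. By construction, for every $s \in I$ and every $\varepsilon > 0$ the set $\{t \in I : t \geq s,\ \|\gamma(t) - a\| < \varepsilon\}$ is non-empty (and in fact unbounded, as one sees by replacing $s$ with $\max(s,N)$ for arbitrary $N$).

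Second, I construct a definable subsequence-map converging to $a$. Writing $f(t) := \|\gamma(t) - a\|$, I define the definable function $\tau \colon R^{>0} \to R$ by $\tau(s) := \inf\{t \in I : t \geq s,\ f(t) \leq 1/s\}$, which satisfies $\tau(s) \geq s$, so $\tau(s) \to \infty$ with $s$. By the definition of infimum, the definable set $B_s := \{t \in I : \tau(s) \leq t \leq \tau(s) + 1/s,\ f(t) \leq 1/s\}$ is non-empty for every $s > 0$. Applying Lemma~\ref{lem:defchoice} to a suitable closed bounded version of the $B_s$, I extract a definable selector $\psi \colon R^{>0} \to I$ with $\psi(s) \in B_s$, hence $\psi(s) \geq s$ and $f(\psi(s)) \leq 1/s$, and set $I' := \psi(R^{>0}) \subseteq I$. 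The set $I'$ is then definable and unbounded, and $\gamma|_{I'}$ converges to $a$.

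The main technical obstacle is the definable selection step: Lemma~\ref{lem:defchoice} requires a family of non-empty closed bounded sets, whereas the $B_s$ need not be closed, because $\gamma$ (and hence $f$) is not assumed continuous and $I$ itself may fail to be closed in $R$. Replacing $B_s$ by its closure $\cl(B_s)$ can introduce spurious points outside $I$, or points at which the bound $f \leq 1/s$ fails; one must argue carefully---for instance via a secondary definable selection, or by directly exhibiting $I'$ through a definable existential formula whose unboundedness and convergence properties are verified by elementary estimates---to produce a selector $\psi$ landing genuinely inside $I$ with a controlled norm bound. Once this is in place, the verification that $I'$ is unbounded ($\psi(s) \geq s$) and that $\gamma|_{I'} \to a$ (from $f(\psi(s)) \leq 1/s \to 0$) is routine.
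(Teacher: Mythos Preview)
Your first step---producing the candidate $a$ via the monotone family $\{C_s\}$ and Lemma~\ref{lem:monotone}---is exactly the paper's opening move.

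The second step has two problems. The first is the one you flag: Lemma~\ref{lem:defchoice} needs closed bounded fibres, and your sets $B_s$ need not be closed. The second, which you do not flag, is that your convergence claim is not justified even granting a selector. You assert that $\gamma|_{I'}\to a$ follows from $f(\psi(s))\leq 1/s\to 0$; but convergence of the subsequence-map is a statement about $f(t)$ as $t\to\infty$ along $I'$, not about $f(\psi(s))$ as $s\to\infty$. A large element $t\in I'=\psi(R^{>0})$ may arise as $\psi(s)$ for a \emph{small} $s$: since your window $[\tau(s),\tau(s)+1/s]$ has width $1/s$, which blows up as $s\to 0^+$, values $\psi(s)$ with tiny $s$ can land arbitrarily far out while the bound $f(\psi(s))\leq 1/s$ says nothing useful. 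So $I'$ may contain arbitrarily large $t$ with $f(t)$ not small. A cure exists (restrict to $s\geq 1$, so the window width is at most $1$, and then use monotonicity of $\tau$ to force large $t$ to come from large $s$), but that is exactly the extra argument you have not supplied, and your ``routine'' is doing real work here.

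The paper sidesteps both issues by taking precisely the route you list as a fallback: it defines $I'$ directly as a definable subset of $I$ via an existential formula, never invoking a selector. With $I_\varepsilon:=\{t\in I:\norm{\gamma(t)-a}<\varepsilon\}$ and $s_\varepsilon:=1+\inf(I_\varepsilon\cap R^{\geq 1/\varepsilon})$, one sets
\[
I':=\big\{t:\exists\varepsilon>0\ (t\in I_\varepsilon\ \text{and}\ t\leq s_\varepsilon)\big\}.
\]
Unboundedness is immediate (for each $\varepsilon$ there is $t\in I_\varepsilon$ with $1/\varepsilon\leq t\leq s_\varepsilon$). For convergence one checks the monotonicity $\varepsilon'\leq\varepsilon\Rightarrow s_{\varepsilon'}\geq s_\varepsilon$; then any $t\in I'$ with $t>s_\varepsilon$ must have its witness $\varepsilon'<\varepsilon$, whence $t\in I_{\varepsilon'}\subseteq I_\varepsilon$. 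The fixed additive window $+1$ (in place of your $+1/s$) is what makes this monotonicity argument go through cleanly.
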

\begin{proof}
Let $\gamma\colon I\to S$ be a sequence-map. In the following let $\varepsilon$, $\varepsilon'$ and $t$ range over $R^{>0}$.
For every $t$ put $S_t:=\cl(\gamma(I^{>t}))$, a closed and bounded non-empty definable subset of $S$. By Lemma~\ref{lem:monotone} we have $\bigcap_t S_t\neq\emptyset$. Let $a\in \bigcap_t S_t$; we claim that $a$ is an accumulation point of $\gamma$. To see this, note that by choice of $a$, for every $\varepsilon$ the definable set
$$I_{\varepsilon} := \big\{t\in I : \norm{\gamma(t)-a}<\varepsilon \big\}$$
is unbounded, hence $I_\varepsilon\cap R^{\geq 1/\varepsilon}\neq\emptyset$. For each $\varepsilon$ put 
$$t_\varepsilon := \inf\big(I_\varepsilon\cap R^{\geq 1/\varepsilon}\big)\in R^{>0}, \qquad s_\varepsilon := t_\varepsilon+1.$$
So for every $\varepsilon$ there exists $t\in I_\varepsilon$ with $1/\varepsilon \leq t\leq s_\varepsilon$, hence the definable subset
$$I' := \big\{t: \exists\varepsilon\, (t\in I_\varepsilon\ \&\ t\leq s_\varepsilon)\big\}$$
of $I$ is unbounded. 
Moreover, if $\varepsilon'\leq\varepsilon$ then $s_{\varepsilon'}\geq s_{\varepsilon}$.
Let $\varepsilon$ be given, and let $t\in I'$ with $t > s_{\varepsilon}$. Then there is some $\varepsilon'$ with $t\in I_{\varepsilon'}$ and $t\leq s_{\varepsilon'}$. Then $\varepsilon>\varepsilon'$ and hence $t\in I_\varepsilon$. This shows that $a=\lim\limits_{t\rightarrow \infty,\ t\in I'} \gamma(t)$.
\end{proof}

\subsection{Moduli of continuity}
Let $f\colon S\to R^n$ be a definable map, where $S\subseteq R^m$ is non-empty. Then the {\bf mod\-u\-lus of continuity $\omega_f\colon R^{\geq 0}\to R_\infty$ of $f$} is given by
$$\omega_f(t):=\sup\big\{ ||f(x)-f(y)|| : x,y\in S,\ ||x-y||\leq t \big\}.$$
The function $\omega_f$ is definable and increasing with $\omega_f\geq 0$, and $f$ is uniformly continuous if and only if $\omega_f(t)\to 0$ as $t\to 0^+$.  If $f$ is bounded, then $\omega_f$ is finite.

\begin{lemma}\label{lem:extension to closure}
Suppose $f$ is uniformly continuous. Then $f$ extends uniquely to a continuous map $F\colon\cl(S)\to R^n$. This extension is again definable, with
$$\omega_f(t) \leq \omega_F(t)\leq \inf_{t'>0} \omega_f(t'+t)\qquad\text{for all $t>0$.}$$
In particular, $F$ remains uniformly continuous. 
\end{lemma}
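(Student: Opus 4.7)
The plan is to define $F(x)$ pointwise as a definable analogue of the Cauchy-completion extension. Uniform continuity gives $\omega_f(t)\to 0$ as $t\to 0^+$, so I can fix $\varepsilon_0>0$ on which $\omega_f$ is finite. For each $x\in\cl(S)$ and $0<\varepsilon\leq\varepsilon_0$, put $C_\varepsilon(x):=\cl(f(S\cap\ol{B}_\varepsilon(x)))$. This set is non-empty since $x\in\cl(S)$, and any two points of $f(S\cap\ol{B}_\varepsilon(x))$ lie within $\omega_f(2\varepsilon)$ of each other, so $C_\varepsilon(x)$ is closed and bounded with diameter at most $\omega_f(2\varepsilon)$. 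The definable family $\{C_\varepsilon(x)\}_{0<\varepsilon\leq\varepsilon_0}$ is decreasing in $\varepsilon$, so Lemma~\ref{lem:monotone} gives $\bigcap_\varepsilon C_\varepsilon(x)\neq\emptyset$; since the diameters tend to $0$, this intersection contains exactly one point, which I declare to be $F(x)$.

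Definability of $F\colon\cl(S)\to R^n$ follows from the first-order characterization: $F(x)=y$ iff for every $\varepsilon>0$ there exists $\delta>0$ with $f(S\cap B_\delta(x))\subseteq B_\varepsilon(y)$. (The equivalence with the above construction uses the diameter bound: any $y$ satisfying this condition lies in every $C_\varepsilon(x)$, hence equals $F(x)$.) For $x\in S$ we have $f(x)\in C_\varepsilon(x)$ for all $\varepsilon$, forcing $F(x)=f(x)$, so $F|S=f$. Uniqueness of any continuous extension to $\cl(S)$ follows from the same characterization.

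It remains to bound $\omega_F$; continuity of $F$ will then be a consequence. The inequality $\omega_f\leq\omega_F$ is immediate from $F|S=f$ and $S\subseteq\cl(S)$. For the upper bound, fix $x,y\in\cl(S)$ with $||x-y||\leq t$, and let $t'>0$ and $\eta>0$. By the pointwise characterization, pick $\delta,\delta'\in(0,t'/2)$ such that $||f(z)-F(x)||<\eta$ for all $z\in S\cap B_\delta(x)$ and $||f(w)-F(y)||<\eta$ for all $w\in S\cap B_{\delta'}(y)$, and choose $x'\in S\cap B_\delta(x)$, $y'\in S\cap B_{\delta'}(y)$ (non-empty since $x,y\in\cl(S)$). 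Then $||x'-y'||\leq t+t'$, so $||f(x')-f(y')||\leq\omega_f(t+t')$, and the triangle inequality yields $||F(x)-F(y)||\leq 2\eta+\omega_f(t+t')$. As $\eta>0$ was arbitrary, $||F(x)-F(y)||\leq\omega_f(t+t')$ for every $t'>0$; taking the supremum over such $x,y$ and the infimum over $t'>0$ gives $\omega_F(t)\leq\inf_{t'>0}\omega_f(t+t')$. Continuity (indeed, uniform continuity) of $F$ then follows because $\inf_{t'>0}\omega_f(t+t')\leq\omega_f(2t)\to 0$ as $t\to 0^+$. The only subtle point is ensuring boundedness of the sets $C_\varepsilon(x)$, which is why I restricted the construction to $\varepsilon\leq\varepsilon_0$.
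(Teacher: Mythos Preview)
Your proof is correct and follows essentially the same approach as the paper: both construct $F(x)$ as the unique point in the intersection of a decreasing definable family of non-empty closed bounded sets (invoking Lemma~\ref{lem:monotone}), and both obtain the upper bound on $\omega_F$ by approximating $x,y\in\cl(S)$ by nearby points of $S$ and applying the triangle inequality. The only cosmetic difference is the choice of nested sets---the paper uses $C_t=\bigcap_{x\in \ol{B}_t(x_0)\cap S}\ol{B}_{\omega_f(t)}(f(x))$ while you use $\cl\big(f(S\cap\ol{B}_\varepsilon(x))\big)$---and one tiny slip: to guarantee your $C_\varepsilon(x)$ is bounded you need $\omega_f(2\varepsilon)<\infty$, so the family should be indexed by $\varepsilon\leq\varepsilon_0/2$ rather than $\varepsilon\leq\varepsilon_0$.
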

\begin{proof}
Uniqueness is easy to see (and only needs continuity of $f$). For existence, take $\delta>0$ such that the restriction of $\omega_f$ to the interval $A:=(0,\delta)$ is finite. 
Let $x_0\in\cl(S)$; we introduce a definable family $\mathcal C=\mathcal C(x_0)$ as follows: For $t\in A$ let
$$C_t := \bigcap_{x\in \ol{B}_t(x_0)\cap S} \ol{B}_{\omega_f(t)}(f(x));$$
then $\mathcal C=\{C_t\}_{t\in A}$ is a decreasing definable family of
non-empty closed bounded subsets of $R^m$.  By Lemma~\ref{lem:monotone}, we have $\bigcap\mathcal C\neq\emptyset$. Note that this intersection is a singleton: if $y\neq y'$ are both in $\mathcal C$, take $t\in A$ such that $\omega_f(t)<\frac{1}{2}||y-y'||$; then for every $x\in\ol{B}_t(x_0)\cap S$ we have $||y-f(x)||\leq\omega_f(t)$ and $||y'-f(x)||\leq\omega_f(t)$, hence $||y-y'||\leq 2\omega_f(t)$, a contradiction. Therefore we have a definable map $F\colon\cl(S)\to R^m$ which sends $x_0\in\cl(S)$ to the unique element in $\bigcap \mathcal C(x_0)$. Clearly the map $F$ extends $f$, and hence $\omega_f\leq\omega_F$. Let $t>0$ and $x_0,x_1\in\cl(S)$ with $||x_0-x_1||\leq t$ be given. 
For every $t'$ with $0<t'<\delta-t$ we find $y_0,y_1\in S$ with $||x_0-y_0||\leq t'/2$ and $||x_1-y_1||\leq t'/2$, and  so $||y_0-y_1||\leq t+t'$; then 
\begin{align*}
||F(x_0)-F(x_1)||&\leq ||F(x_0)-f(y_0)||+||f(y_0)-f(y_1)||+||F(x_1)-f(y_1)||\\
&\leq \omega_f(t')+\omega_f(t'+t)+\omega_f(t').
\end{align*}
The inequality for the moduli of continuity now follows by letting $t'\to 0$.
\end{proof}

As over $\R$ we have  uniform continuity of definable continuous maps with closed and bounded domain, as shown in the next lemma. (The classical proof of this fact uses the finite subcover property of compact sets.) Notice that by Corollary~\ref{cor:minmax}, definable, closed and bounded non-empty subsets $D$ and $E$ of $R^m$ have a common point if and only if $d(D,E)=0$, where $d(D,E):=\inf\big\{\norm{x-y}:x\in D, y\in E\big\}$ is the distance between $D$ and $E$. 
\begin{lemma}\label{lem:cbd implies uniform continuous}
Suppose $S$ is closed and bounded, and $f$ is continuous. Then $f$ is uniformly continuous.
\end{lemma}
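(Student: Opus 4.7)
The plan is to argue by contradiction, with the definable Bolzano--Weierstrass theorem (Proposition~\ref{prop:BW}) playing the role that the finite subcover property plays in the classical proof. Suppose $f$ is not uniformly continuous. Since $S$ is closed and bounded, $f(S)$ is closed and bounded by Proposition~\ref{prop:cbd}, so $\omega_f$ is finite, and non-uniform continuity of $f$ translates (since $\omega_f$ is increasing) into the existence of some $\varepsilon \in R^{>0}$ with $\omega_f(t)\geq\varepsilon$ for every $t\in R^{>0}$. For each such $t$, the set
$$D_t := \bigl\{(x,y)\in S\times S : \|x-y\|\leq 1/t,\ \|f(x)-f(y)\|\geq\varepsilon\bigr\}$$
is then a non-empty, closed, bounded, definable subset of $R^{2m}$ (non-emptiness from $\omega_f(1/t)\geq\varepsilon$, closedness and boundedness from continuity of $f$ and of $S\times S$).

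Next I would apply the definable choice lemma (Lemma~\ref{lem:defchoice}) to $\{D_t\}_{t\in R^{>0}}$ to obtain a definable map $t\mapsto (x_t,y_t)$ with $(x_t,y_t)\in D_t$. Writing $\gamma(t):=x_t$ and $\widetilde\gamma(t):=y_t$, both are sequence-maps $R^{>0}\to S$. Proposition~\ref{prop:BW} furnishes an accumulation point $a\in S$ of $\gamma$, i.e., an unbounded definable $I'\subseteq R^{>0}$ with $\gamma|_{I'}$ converging to $a$. A second application of Proposition~\ref{prop:BW} to the subsequence-map $\widetilde\gamma|_{I'}$ then produces an unbounded $I''\subseteq I'$ along which $\widetilde\gamma|_{I''}$ converges to some $a'\in S$.

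To finish, I would observe that on $I''$ we have $\|x_t-y_t\|\leq 1/t\to 0$, which forces $a=a'$. By continuity of $f$ at $a$, both $f(x_t)$ and $f(y_t)$ converge to $f(a)$ along $I''$, so $\|f(x_t)-f(y_t)\|\to 0$; but $\|f(x_t)-f(y_t)\|\geq\varepsilon$ for all $t\in I''$ by construction, a contradiction.

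The main obstacle, and the reason the proof departs from the textbook one, is that we cannot directly invoke a finite subcover of balls witnessing uniform continuity; instead the work is done by iterating the definable Bolzano--Weierstrass theorem on nested subsequence-maps (first extracting convergence in the first coordinate, then refining to get convergence in the second), which is legitimate because a subsequence-map of a subsequence-map is again a subsequence-map by definition. Definable choice (Lemma~\ref{lem:defchoice}) is what allows us to produce the pair-valued sequence-map in a definable way, sidestepping the need for any form of the Axiom of Choice.
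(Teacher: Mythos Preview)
Your proof is correct, but it takes a genuinely different route from the paper's. The paper argues much more directly: assuming $\omega_f(t)\geq\varepsilon$ for arbitrarily small $t$, it observes that the two definable closed bounded sets $D=\{(x,y)\in S\times S:\norm{f(x)-f(y)}\geq\varepsilon\}$ and $E=\{(x,x):x\in S\}$ are disjoint yet satisfy $d(D,E)=0$, contradicting the remark (an immediate consequence of Corollary~\ref{cor:minmax}) that disjoint non-empty closed bounded definable sets have strictly positive distance. Your argument instead transplants the classical sequential proof: you build a definable one-parameter family of witnessing pairs via Lemma~\ref{lem:defchoice}, then iterate the definable Bolzano--Weierstrass theorem (Proposition~\ref{prop:BW}) to extract convergent subsequence-maps in each coordinate. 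Both approaches ultimately rest on Lemma~\ref{lem:monotone} (the paper's via Corollary~\ref{cor:minmax}, yours via Proposition~\ref{prop:BW}), but the paper's is shorter and avoids definable choice and the double extraction; your version has the virtue of making transparent how the standard real-analysis proof survives in the definably complete setting once sequences are replaced by sequence-maps. One small point worth making explicit in your write-up: the non-emptiness of $D_t$ uses that the supremum defining $\omega_f(1/t)$ is actually attained, which follows from Corollary~\ref{cor:minmax} applied to the continuous function $(x,y)\mapsto\norm{f(x)-f(y)}$ on the closed bounded set $\{(x,y)\in S\times S:\norm{x-y}\leq 1/t\}$.
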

\begin{proof}
Note that $\omega_f$ is finite since $f$ is bounded, cf.~Proposition~\ref{prop:cbd}. We shall show that $\omega_f(t)\to 0$ as $t\to 0^+$.
Assume, for a contradiction, that $\varepsilon>0$ is such that
$\omega_f(t)\geq\varepsilon$ for arbitrarily small positive $t$.
Then $D:=\{(x,y)\in S\times S:\norm{f(x)-f(y)}\geq \varepsilon\}
$ and $E:=\{(x,x) : x\in S\}$ are
disjoint definable closed and bounded non-empty sets with $d(D,E)=0$,
a contradiction. 
\end{proof}

We say that a function $\omega\colon R^{\geq 0}\to R_\infty$ is {\bf a modulus of continuity of $f$} if $\omega_f\leq\omega$.
The following is easy to show; we skip the proof:

\begin{lemma}\label{lem:inf and sup of Lipschitz functions}
Let $\omega\colon R^{\geq 0}\to R^{\geq 0}$ be definable, and let $\{f_a\}_{a\in A}$ be a definable family of functions $f_a\colon S\to R$ with modulus of continuity $\omega$. If the function
$$x\mapsto \inf_{a\in A} f_a(x)\colon S\to R\cup\{-\infty\}$$
is finite at one point of $S$, then it is finite with modulus of continuity $\omega$.
Similarly, if the function
$$x\mapsto \sup_{a\in A} f_a(x)\colon S\to R\cup\{+\infty\}$$
is finite at one point of $S$, then it is finite with modulus of continuity $\omega$.
\end{lemma}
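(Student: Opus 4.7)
The key observation is that for each $a\in A$ and all $x,y\in S$ the modulus-of-continuity hypothesis delivers
\[
|f_a(x)-f_a(y)|\leq\omega_{f_a}(\|x-y\|)\leq\omega(\|x-y\|),
\]
which rearranges to $f_a(x)\leq f_a(y)+\omega(\|x-y\|)$. Definability of the resulting infimum or supremum is automatic from the definability of the family $\{f_a\}_{a\in A}$ and of $\omega$, so no separate verification of definability is required.

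I will treat the infimum case first. Set $g(x):=\inf_{a\in A}f_a(x)$ and fix $x_0\in S$ with $g(x_0)\in R$; the convention $\inf\emptyset=+\infty$ already forces $A\neq\emptyset$. For an arbitrary $y\in S$, to bound $g(y)$ from below, I use that for every $a\in A$ we have $g(x_0)\leq f_a(x_0)\leq f_a(y)+\omega(\|x_0-y\|)$, and taking the infimum over $a$ (the term $\omega(\|x_0-y\|)$ being independent of $a$) yields $g(y)\geq g(x_0)-\omega(\|x_0-y\|)>-\infty$. For the upper bound, I use that for every $a\in A$, $g(y)\leq f_a(y)\leq f_a(x_0)+\omega(\|x_0-y\|)$, and taking the infimum over $a$ gives $g(y)\leq g(x_0)+\omega(\|x_0-y\|)<+\infty$. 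Hence $g$ is finite throughout $S$.

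Once finiteness is secured, the modulus-of-continuity estimate is the same trick: for $x,y\in S$ and each $a\in A$, $g(x)\leq f_a(x)\leq f_a(y)+\omega(\|x-y\|)$, and taking the infimum over $a$ produces $g(x)\leq g(y)+\omega(\|x-y\|)$; symmetrizing gives $|g(x)-g(y)|\leq\omega(\|x-y\|)$, whence $\omega_g(t)\leq\omega(t)$ for all $t\geq 0$. The supremum case reduces instantly to the infimum case upon replacing each $f_a$ by $-f_a$, since $f_a$ and $-f_a$ share the same modulus of continuity. I do not anticipate any real obstacle: the argument is the standard preservation of Lipschitz-type estimates under pointwise infima and suprema, and uses only the order-completeness already built into the definition of definable completeness, with no appeal to the deeper results from Sections~\ref{sec:defcompleteness}--\ref{sec:def BW}.
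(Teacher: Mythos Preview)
Your argument is correct and is the standard one; the paper itself omits the proof entirely (``The following is easy to show; we skip the proof''), so there is nothing to compare against beyond noting that your write-up is exactly the routine verification the authors had in mind. One minor redundancy: since the infimum is declared to take values in $R\cup\{-\infty\}$, the upper bound on $g(y)$ is automatic once $A\neq\emptyset$, so only the lower bound needs checking for finiteness.
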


\subsection{Lipschitz maps}\label{sec:Lipschitz}
Let $f\colon S\to R^n$ be a definable map, where $S\subseteq R^m$ is non-empty.
Given $L\in R^{\geq 0}$, we say that $f$ is {\bf $L$-Lipschitz} if $||f(x)-f(y)||\leq L||x-y||$ for all $x,y\in S$. We call $f$ {\bf Lipschitz} if $f$ is $L$-Lipschitz for some $L\in R^{\geq 0}$. Every Lipschitz map is  uniformly continuous; in fact, given $L\in R^{\geq 0}$,
$f$ is $L$-Lipschitz if and only if $t\mapsto Lt$ is a modulus of continuity of $f$. Consequently, if $f$ is $L$-Lipschitz, then $f$ extends uniquely to a continuous map $\cl(S)\to R^n$, and this map is also $L$-Lipschitz, by Lemma~\ref{lem:extension to closure}.

We use  {\bf non-expansive} synonymously for $1$-Lipschitz. By the triangle inequality, for every $y\in R^n$ the function
$$x\mapsto d(x,y):=||x-y||\colon R^n\to R$$
is non-expansive. From Lemma~\ref{lem:inf and sup of Lipschitz functions} we  therefore obtain:

\begin{corollary} \label{cor:distance is non-expansive}
For every definable subset $S$ of $R^n$, the distance function
$$x\mapsto d(x,S):=\inf\big\{d(x,y):y\in S\big\}\colon R^n\to R$$
is non-expansive.
\end{corollary}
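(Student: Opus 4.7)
The plan is to apply Lemma~\ref{lem:inf and sup of Lipschitz functions} to the definable family $\{f_y\}_{y\in S}$ of functions $f_y\colon R^n\to R$ defined by $f_y(x):=d(x,y)=\|x-y\|$. As noted in the paragraph immediately preceding the corollary, each $f_y$ is non-expansive (i.e.\ $1$-Lipschitz) by the triangle inequality, so the function $\omega\colon R^{\geq 0}\to R^{\geq 0}$, $\omega(t):=t$, is a common modulus of continuity for the family. By definition one has
$$d(x,S)=\inf_{y\in S} f_y(x),$$
so the distance function is exactly the pointwise infimum covered by the lemma.

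To invoke Lemma~\ref{lem:inf and sup of Lipschitz functions} I need the infimum to be finite at one point of $R^n$. The statement of the corollary implicitly assumes $S\neq\emptyset$ (otherwise the displayed infimum is $+\infty$, not an element of $R$); picking any $y_0\in S$ and any $x_0\in R^n$ yields
$$0\leq d(x_0,S)\leq f_{y_0}(x_0)=\|x_0-y_0\|\in R,$$
so finiteness holds at $x_0$. The lemma then delivers finiteness of $d(\cdot,S)$ on all of $R^n$ together with the modulus of continuity $\omega(t)=t$, which is precisely the non-expansiveness claim. There is no substantive obstacle here: the corollary is essentially a one-line deduction from the preceding lemma and the non-expansiveness of each $f_y$, with the only mild subtlety being the choice of a basepoint at which to verify finiteness of the infimum.
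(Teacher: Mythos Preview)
Your proof is correct and follows exactly the paper's approach: the paper simply states that the corollary follows from Lemma~\ref{lem:inf and sup of Lipschitz functions} together with the observation (made in the preceding paragraph) that each $x\mapsto d(x,y)$ is non-expansive by the triangle inequality. Your added verification of finiteness at a basepoint is a harmless elaboration that the paper leaves implicit.
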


The set $S$ in the previous corollary was not assumed to be closed.
However, in this context one may often reduce to the case of a closed set, since $d(x,S)=d(x,\cl(S))$ for every definable set $S\subseteq R^n$ and every $x\in R^n$.
For closed sets we have, as a consequence of Corollary~\ref{cor:minmax}:

\begin{corollary}\label{cor:nearest point}
Suppose $S$ is closed and definable. Then
for every $x\in R^n$ there is a nearest point of $S$ to $x$, that is, a point $y_0\in S$ such that $d(x,y_0) = d(x,S)$.
\end{corollary}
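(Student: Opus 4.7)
The plan is to reduce the problem to finding the minimum of the continuous definable distance function $y\mapsto\norm{x-y}$ on a suitable closed and bounded definable subset of $S$, and then invoke Corollary~\ref{cor:minmax}.

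Fix $x\in R^n$. We may assume $S\neq\emptyset$ (otherwise there is nothing to prove, or the statement is vacuous depending on the convention for $d(x,\emptyset)$). Pick any $y_1\in S$ and set $\varrho:=\norm{x-y_1}\in R^{\geq 0}$. Consider the definable set
$$T := S\cap \ol{B}_\varrho(x).$$
Then $T$ is closed (as an intersection of two closed sets), bounded (as a subset of $\ol{B}_\varrho(x)$), and non-empty (it contains $y_1$). The function
$$g\colon T\to R,\qquad g(y):=\norm{x-y},$$
is definable and continuous. By Corollary~\ref{cor:minmax}, $g$ attains a minimum on $T$ at some point $y_0\in T$.

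It remains to observe that $y_0$ is also a nearest point of $S$ to $x$. Indeed, for any $y\in S\setminus T$ we have $\norm{x-y}>\varrho=\norm{x-y_1}\geq\norm{x-y_0}$, since $y_1\in T$ and $y_0$ minimizes $g$ on $T$. Hence $d(x,y_0)=\inf\{\norm{x-y}:y\in S\}=d(x,S)$, as required.

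The argument is essentially routine; the only point worth highlighting is the use of an auxiliary point $y_1\in S$ to truncate the (possibly unbounded) closed set $S$ to a closed bounded definable set without changing the infimum of the distance function, so that Corollary~\ref{cor:minmax} becomes applicable.
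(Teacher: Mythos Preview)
Your proof is correct and follows essentially the same route as the paper: intersect $S$ with a closed ball of suitable radius to obtain a non-empty closed bounded definable set, then apply Corollary~\ref{cor:minmax} to the continuous distance function. The paper is slightly terser and simply asserts that any $\varrho>0$ making $S\cap\ol{B}_\varrho(x)$ non-empty works, but your explicit choice $\varrho=\norm{x-y_1}$ is the natural way to realize this.
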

\begin{proof}
Let $x\in R^n$.
Choose $\varrho>0$ such that the closed and bounded definable set $S\cap \ol{B}_\varrho(x)$ is non-empty. By Corollary~\ref{cor:minmax} the function $y\mapsto d(x,y)$ attains a minimum on this set, say at $y_0$; then $y_0$ is a nearest point of $S$ to $x$.
\end{proof}

The following concept plays an important role in the proof of Theorem~\ref{thm:Kirszbraun} below.

\begin{definition}
A map $f\colon S\rightarrow R^n$, where $S\subseteq R^n$, is called
{\bf  firmly non-expansive} if
$$\norm{f(x)-f(y)}^2\leq\langle f(x)-f(y),x-y\rangle
\qquad\text{for all $x,y\in S$.}$$
\end{definition}
The Cauchy-Schwarz Inequality implies that every firmly non-expansive map is non-expansive.  We also have the following fact, well-known in classical convex analysis  (see, e.g., \cite[Theorem~12.1]{Goebel}):
\begin{proposition}\label{prop44}
Let $S\subseteq R^n$. Then $f\mapsto \frac{1}{2} (f+\id)$
 is a bijection from the set of non-expansive maps $S\to R^n$ to the set of firmly non-expansive maps $S\to R^n$.
\end{proposition}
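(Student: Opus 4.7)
The plan is a direct algebraic computation which shows that the condition ``$g$ is firmly non-expansive'' translates exactly to ``$f=2g-\id$ is non-expansive,'' and vice versa. Nothing from the definably complete setting is needed — this is a purely pointwise identity.

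First I would fix $S\subseteq R^n$ and, for a definable map $f\colon S\to R^n$, set $g:=\frac12(f+\id)$. For $x,y\in S$ I would write $u:=f(x)-f(y)$ and $v:=x-y$, so that $g(x)-g(y)=\frac12(u+v)$. Expanding,
\begin{align*}
\norm{g(x)-g(y)}^2 &= \tfrac14\bigl(\norm{u}^2+2\langle u,v\rangle+\norm{v}^2\bigr),\\
\langle g(x)-g(y),\,x-y\rangle &= \tfrac12\bigl(\langle u,v\rangle+\norm{v}^2\bigr).
\end{align*}
A subtraction then yields the key identity
\[
\langle g(x)-g(y),\,x-y\rangle-\norm{g(x)-g(y)}^2=\tfrac14\bigl(\norm{v}^2-\norm{u}^2\bigr),
\]
so firm non-expansiveness of $g$ at the pair $(x,y)$ is literally equivalent to $\norm{u}\leq\norm{v}$, i.e., to non-expansiveness of $f$ at $(x,y)$.

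For bijectivity I would note that the assignment $\Phi\colon f\mapsto\tfrac12(f+\id)$ has an obvious candidate inverse $\Psi\colon g\mapsto 2g-\id$, and that each is definable whenever the input is. The identity above shows that $\Phi$ sends non-expansive $f$ to firmly non-expansive $g$, and, read in reverse (with $f=2g-\id$), that $\Psi$ sends firmly non-expansive $g$ to non-expansive $f$. Since $\Psi\circ\Phi=\id$ and $\Phi\circ\Psi=\id$ as self-maps of the space of all maps $S\to R^n$, restricting them to the two distinguished classes gives mutually inverse bijections, as required.

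There is no real obstacle here; the only thing to be slightly careful about is to phrase the equivalence bidirectionally so that both halves of the bijection argument come out of the same one-line computation, rather than proving one direction and leaving the other for the reader.
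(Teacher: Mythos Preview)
Your proof is correct and follows essentially the same route as the paper: both arguments expand $\norm{g(x)-g(y)}^2$ and $\langle g(x)-g(y),x-y\rangle$ for $g=\tfrac12(f+\id)$ and observe that their difference equals $\tfrac14\bigl(\norm{x-y}^2-\norm{f(x)-f(y)}^2\bigr)$. Your version is slightly more explicit in naming the inverse $g\mapsto 2g-\id$ and spelling out the bijectivity, but the underlying identity is identical.
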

\begin{proof}
For $x,y\in S$ consider 
\begin{align*} 
a(x,y) &:=\textstyle\frac{1}{4}\norm{x-y}^2+\frac{1}{2}\langle f(x)-f(y),x-y\rangle+\frac{1}{4}\norm{f(x)-f(y)}^2\\
&=\textstyle\norm{\frac{1}{2}(x+f(x))-\frac{1}{2}(y+f(y))}^2
\end{align*}
and
\begin{align*} 
b(x,y) &:=\textstyle\frac{1}{2}\norm{x-y}^2+\frac{1}{2}\left\langle f(x)-f(y),x-y\right\rangle\\ &=\textstyle\left\langle \frac{1}{2}(x+f(x))-\frac{1}{2}(y+f(y)), x-y\right\rangle.
\end{align*}
Then $x\mapsto \frac{1}{2}(x+f(x))$ is firmly non-expansive if and only if $a(x,y)\leq b(x,y)$ for all $x,y\in S$. Moreover, for given $x,y\in S$, the inequality $a(x,y) \leq b(x,y)$ holds if and only if  $\norm{f(x)-f(y)}\leq \norm{x-y}$.
\end{proof}

\subsection{Minkowski sum}\label{sec:Minkowski}
Let $A$ and $B$ be subsets of $R^n$. We denote the (Minkowski) sum of $A$ and $B$ by 
$$A+B=\{a+b:a\in A,\ b\in B\}.$$ If both $A$ and $B$ are closed, then $A+B$ is not necessarily closed, as the example $$A=\{0\}\times R, \quad B=\big\{(x,y)\in R^2: xy\geq 1,\ x\geq 0\big\}$$ shows.
The following fact is used in Section~\ref{sec:Helly applications}.

\begin{lemma}\label{lem:A+B closed}
Let $A,B\subseteq R^n$ be definable, and suppose $A$ is closed, and $B$ is closed and bounded. Then $A+B$ is closed.
\end{lemma}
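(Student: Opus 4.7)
The plan is to mimic the classical argument (take a sequence in $A+B$ converging to a point $c$; use compactness of $B$ to extract a convergent subsequence of the $B$-components; then closedness of $A$ forces the limit into $A+B$), but replace ordinary sequences with sequence-maps, compactness with the Definable Bolzano-Weierstrass Theorem (Proposition~\ref{prop:BW}), and sequence extraction with definable choice (Lemma~\ref{lem:defchoice}).

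Concretely, I would argue that $\cl(A+B)\subseteq A+B$. Fix $c\in\cl(A+B)$. Choose $M>0$ with $B\subseteq\ol{B}_M(0)$, and for each $t\in R^{\geq 1}$ consider the definable set
\[
S_t := \bigl\{(a,b)\in A\times B : \norm{a+b-c}\leq 1/t,\ \norm{a}\leq \norm{c}+M+1\bigr\}.
\]
Each $S_t$ is closed and bounded. It is non-empty: since $c\in\cl(A+B)$, there exist $a\in A$, $b\in B$ with $\norm{a+b-c}\leq 1/t$, and then the triangle inequality gives $\norm{a}\leq\norm{c}+\norm{b}+1/t\leq\norm{c}+M+1$. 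Hence by Lemma~\ref{lem:defchoice} applied to the definable family $\{S_t\}_{t\in R^{\geq 1}}$, there is a definable map $t\mapsto(a_t,b_t)$ with $(a_t,b_t)\in S_t$ for every $t\geq 1$.

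Now $t\mapsto b_t$ is a sequence-map into the closed and bounded set $B$, so by the Definable Bolzano-Weierstrass Theorem (Proposition~\ref{prop:BW}) it has an accumulation point $b\in B$: there is an unbounded definable $I'\subseteq R^{\geq 1}$ with $b = \lim_{t\to\infty,\ t\in I'} b_t$. Since $\norm{a_t+b_t-c}\leq 1/t$, we have $a_t+b_t\to c$ on $I'$ as well, and subtracting gives $a_t\to c-b$ along $I'$. As $a_t\in A$ and $A$ is closed, $c-b\in A$, so $c=(c-b)+b\in A+B$, as required.

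There is no real obstacle here once the right definable family is set up; the only subtlety is to cut $A$ down to a closed and bounded region (via the bound $\norm{a}\leq\norm{c}+M+1$) so that Lemma~\ref{lem:defchoice} actually applies, and to verify that the family $\{S_t\}$ is non-empty for all $t\geq 1$, which uses the definition of $\cl(A+B)$ directly.
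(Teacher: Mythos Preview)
Your argument is correct. It differs from the paper's in that you invoke definable choice (Lemma~\ref{lem:defchoice}) to produce a sequence-map and then the Definable Bolzano--Weierstrass Theorem (Proposition~\ref{prop:BW}) to extract a limit, whereas the paper applies Lemma~\ref{lem:monotone} directly to the decreasing family $C_\varepsilon=\{(a,b)\in A\times B:\norm{a+b-z}\leq\varepsilon\}$ and simply takes a point in $\bigcap_{\varepsilon>0}C_\varepsilon$. The paper's route is shorter and more economical: it does not need to select points or pass to accumulation points, and Lemma~\ref{lem:monotone} is in fact the tool underlying both Lemma~\ref{lem:defchoice} and Proposition~\ref{prop:BW}, so your approach is going through two derived results rather than straight to the source. (Your explicit bound $\norm{a}\leq\norm{c}+M+1$ is also redundant, since $\norm{a+b-c}\leq 1/t$ and $b\in B$ already force such a bound; the paper simply observes this rather than building it into the definition of the family.) That said, your approach has the pedagogical advantage of mirroring the classical sequential argument almost line for line, which makes the translation from the real case transparent.
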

\begin{proof}
Let $z\in \cl(A+B)$. Then for every $\varepsilon>0$ the definable closed set
$$C_\varepsilon := \big\{ (a,b)\in A\times B : ||a+b-z||\leq\varepsilon\big\}$$
is non-empty. Note that each $C_\varepsilon$ is bounded: if $(a,b)\in C_\varepsilon$ then $||a||\leq ||a+b-z||+||b-z||\leq \varepsilon+\varrho+||z||$ where $\varrho>0$ is such that $B\subseteq B_{\varrho}(0)$. Hence by Lemma~\ref{lem:monotone} we have $\bigcap_{\varepsilon>0} C_\varepsilon\neq\emptyset$, showing that $z\in A+B$.
\end{proof}

\section{Basic Properties of Convex Sets}\label{sec:convex}

\noindent
In this section, $\mathfrak R$ is an expansion of an ordered field $R$.
Recall: $A\subseteq R^n$ is {\bf convex}\/ if for all $x,y\in A$ we have $[x,y]\subseteq A$. 
Here and below, for $x,y\in R^n$ we write
$$[x,y] = \big\{\lambda x+(1-\lambda)y:0\leq\lambda\leq 1\big\}$$
for the line segment in $R^n$ connecting $x$ and $y$. (We also use analogous notation for the half-open line segments $(x,y]$ and $[x,y)$.)
If $A$, $B$ are convex, then so are $A+B$ and $\lambda A=\{\lambda a:a\in A\}$, where $\lambda\in R$.

\subsection{Theorems of Carath\'eodory, Radon, and Helly}
The  intersection of an arbitrary family of convex subsets of $R^n$ is convex. In particular, the intersection of all convex subsets of $R^n$ which contain a given set $A\subseteq R^n$ is a convex set containing $A$, called the {\bf convex hull}\/ $\conv(A)$ of $A$. As in the case $R=\R$ (cf., e.g., \cite[Theorem~2.2.2]{Webster}),  one shows that $\conv(A)$ is the set of {\bf convex combinations}\/ of elements of $A$, that is, the set of $x\in R^n$ for which there are $x_1,\dots,x_k\in R^n$ and $\lambda_1,\dots,\lambda_k\in R^{\geq 0}$ such that $x=\sum_i \lambda_i x_i$ and $\sum_i\lambda_i=1$. In fact, only convex combinations of $n+1$ elements of $A$ need to be considered:

\begin{lemma}[Carath\'eodory's Theorem]
Let $A$ be a subset of $R^n$, and let $x\in\conv(A)$. Then $x$ is a convex combination of affinely independent points in $A$. In particular, $x$ is a convex combination of at most $n+1$ points in $A$.
\end{lemma}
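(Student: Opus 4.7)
The plan is to follow the standard proof of Carath\'eodory's theorem, which is purely a matter of finite-dimensional linear algebra and finite induction, and so transfers verbatim to an arbitrary ordered field $R$ without any recourse to definable completeness.

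First I would fix $x\in\conv(A)$ and write it, as provided by the characterization of $\conv(A)$ recalled just above the lemma, as a convex combination $x=\sum_{i=1}^{k}\lambda_i x_i$ with $x_1,\dots,x_k\in A$, $\lambda_i\geq 0$, and $\sum_i\lambda_i=1$, choosing such a representation with $k$ minimal. The claim is that then $x_1,\dots,x_k$ are affinely independent. The second assertion of the lemma then follows immediately, since an affinely independent subset of $R^n$ has cardinality at most $n+1$.

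Suppose for contradiction that the points $x_1,\dots,x_k$ are affinely dependent. Then there exist $\mu_1,\dots,\mu_k\in R$, not all zero, such that
\[
\sum_{i=1}^{k}\mu_i x_i=0\qquad\text{and}\qquad \sum_{i=1}^{k}\mu_i=0.
\]
Since at least one $\mu_i$ is nonzero and they sum to zero, at least one $\mu_i$ is strictly positive. Set
\[
t:=\min\bigl\{\lambda_i/\mu_i : 1\leq i\leq k,\ \mu_i>0\bigr\},
\]
attained at some index $j$, and define $\lambda_i':=\lambda_i-t\mu_i$ for $i=1,\dots,k$. Then $\lambda_i'\geq 0$ for every $i$ (by the choice of $t$ when $\mu_i>0$, and trivially when $\mu_i\leq 0$), $\lambda_j'=0$, $\sum_i\lambda_i'=\sum_i\lambda_i-t\sum_i\mu_i=1$, and $\sum_i\lambda_i'x_i=\sum_i\lambda_ix_i-t\sum_i\mu_ix_i=x$. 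Dropping the $j$th term gives a representation of $x$ as a convex combination of the $k-1$ points $\{x_i:i\neq j\}\subseteq A$, contradicting the minimality of $k$.

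There is no real obstacle here: the only ingredient beyond elementary arithmetic in $R$ is the fact that $k>n+1$ points in $R^n$ are affinely dependent, i.e., the vectors $x_2-x_1,\dots,x_k-x_1$ are linearly dependent over $R$, which is standard linear algebra over any field. Accordingly, one could equivalently organize the proof by starting from any $k>n+1$ and applying the reduction step to cut $k$ down to $n+1$, but the minimal-representation formulation above yields simultaneously the sharper statement about affine independence and the cardinality bound.
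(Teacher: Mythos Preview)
Your argument is correct and is exactly the standard reduction proof of Carath\'eodory's theorem that the paper has in mind; the paper itself does not spell out a proof but simply remarks that the result ``is also shown just as for $R=\mathbb R$'' and cites \cite[Theorem~2.2.4]{Webster}. Your write-up makes explicit the point the paper only alludes to, namely that the argument uses nothing beyond linear algebra over $R$ and so requires neither the real numbers nor definable completeness.
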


This is also shown just as for $R=\R$, cf.~\cite[Theorem~2.2.4]{Webster}. 
We record some consequences of this lemma. First, an obvious yet important observation:

\begin{corollary}
The convex hull of every definable subset of $R^n$ is definable. 
\end{corollary}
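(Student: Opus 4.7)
The plan is to read the definition of $\conv(A)$ provided by Carath\'eodory's Theorem as a first-order formula in the language of $\mathfrak{R}$ expanded by a predicate for $A$. Concretely, I would write down the explicit description
$$\conv(A) = \left\{\sum_{i=1}^{n+1}\lambda_i x_i : x_1,\dots,x_{n+1}\in A,\ \lambda_1,\dots,\lambda_{n+1}\in R^{\geq 0},\ \sum_{i=1}^{n+1}\lambda_i=1\right\},$$
which is exactly the content of Carath\'eodory's Theorem stated just before (where points appearing with coefficient zero allow us to pad any shorter convex combination up to $n+1$ terms).

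Next I would note that the condition on the right-hand side is a finite existential formula: $x\in\conv(A)$ if and only if there exist $(n+1)n$ coordinates for the $x_i$'s and $n+1$ scalars $\lambda_i\in R$ satisfying the quantifier-free conditions $\lambda_i\ge 0$, $\sum_i\lambda_i=1$, $x=\sum_i\lambda_ix_i$, together with $x_i\in A$ for each $i$. Since $A$ is definable, each predicate ``$x_i\in A$'' is expressed by a formula in the language of $\mathfrak{R}$ (possibly with parameters), and the remaining conditions are polynomial equalities and inequalities. Existentially quantifying over the $x_i$'s and $\lambda_i$'s therefore yields a formula defining $\conv(A)$.

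There is essentially no obstacle here beyond invoking Carath\'eodory; the main conceptual point is simply that bounding the number of summands by $n+1$ turns an a priori infinitary condition (``some finite convex combination'') into a first-order condition with a fixed number of quantifiers, which is what ``definable'' requires. Uniformity in parameters is automatic from the same formula, so the argument also shows that if $\{A_s\}_{s\in P}$ is a definable family of subsets of $R^n$, then $\{\conv(A_s)\}_{s\in P}$ is a definable family as well.
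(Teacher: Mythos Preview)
Your proposal is correct and is precisely the argument the paper has in mind: the corollary is stated as ``an obvious yet important observation'' immediately after Carath\'eodory's Theorem, with no further proof given. You have simply spelled out the obvious step, namely that the bound of $n+1$ summands turns the description of $\conv(A)$ into a single first-order formula with parameters.
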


Clearly the convex hull of a bounded subset of $R^n$ is bounded.
The union of a line and a point not on it shows that the convex hull of a closed definable set need not be closed. However, we have:

\begin{corollary}\label{cor:conv of cbd is cbd}
Let $A\subseteq R^n$.
Then $\conv(\cl(A))\subseteq\cl(\conv(A))$.
Moreover, if $\mathfrak R$ is definably complete and $A$ is definable and bounded, then $\conv(\cl(A))=\cl(\conv(A))$; in particular, the convex hull of every closed and bounded definable set is closed and bounded.
\end{corollary}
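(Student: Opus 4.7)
The first inclusion $\conv(\cl(A))\subseteq \cl(\conv(A))$ holds without any definability or completeness hypothesis, and is a direct consequence of Carath\'eodory's Theorem. Given $x\in \conv(\cl(A))$, write $x=\sum_{i=1}^{n+1}\lambda_i x_i$ with $x_i\in\cl(A)$, $\lambda_i\geq 0$, $\sum_i\lambda_i=1$. For any $\varepsilon>0$, choose $y_i\in A$ with $\norm{x_i-y_i}<\varepsilon$; then $y:=\sum_i\lambda_i y_i\in\conv(A)$ and $\norm{x-y}\leq\sum_i\lambda_i\norm{x_i-y_i}<\varepsilon$, so $x\in\cl(\conv(A))$.

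For the reverse inclusion under the stated hypotheses, the key step is to show that $\conv(\cl(A))$ is already closed; once this is established, $\conv(A)\subseteq\conv(\cl(A))$ gives $\cl(\conv(A))\subseteq\conv(\cl(A))$. To prove closedness, I would represent $\conv(\cl(A))$ as the image of a closed and bounded set under a continuous definable map and then invoke Proposition~\ref{prop:cbd}. Concretely, set $B:=\cl(A)$, which is closed, bounded, and definable (closedness is clear; boundedness follows from $A\subseteq B_\varrho(0)$ implying $B\subseteq\ol{B}_\varrho(0)$). Let
$$\Delta:=\Big\{(\lambda_1,\dots,\lambda_{n+1})\in R^{n+1}:\lambda_i\geq 0,\ \textstyle\sum_i\lambda_i=1\Big\},$$
a closed and bounded definable subset of $R^{n+1}$, and consider
$$\Phi\colon \Delta\times B^{n+1}\to R^n,\qquad \Phi(\lambda,x_1,\dots,x_{n+1}):=\sum_{i=1}^{n+1}\lambda_i x_i.$$
Then $\Phi$ is continuous and definable, its domain $\Delta\times B^{n+1}$ is closed and bounded, and by Carath\'eodory's Theorem its image is exactly $\conv(B)=\conv(\cl(A))$. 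Applying Proposition~\ref{prop:cbd} shows that $\conv(\cl(A))$ is closed (and bounded), completing the proof of the equality.

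The final assertion is then immediate: if $A$ itself is closed and bounded and definable, take $\cl(A)=A$ in the argument above to conclude that $\conv(A)$ is closed and bounded. The main conceptual point of the proof is that the usual topological compactness argument over $\R$ has a clean definable replacement here, namely Proposition~\ref{prop:cbd}, which guarantees that continuous definable images of closed bounded definable sets are closed and bounded; no serious obstacle arises once one sets up the Carath\'eodory parametrization correctly.
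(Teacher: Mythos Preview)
Your proof is correct and follows essentially the same approach as the paper: the key step---representing $\conv(\cl(A))$ as the image of the closed bounded definable set $\Delta\times\cl(A)^{n+1}$ under the continuous definable map $(\lambda,x_1,\dots,x_{n+1})\mapsto\sum_i\lambda_i x_i$ via Carath\'eodory, and then invoking Proposition~\ref{prop:cbd}---is exactly what the paper does. The only minor difference is in the first inclusion, where the paper uses the one-line observation that the closure of a convex set is convex, so $\cl(\conv(A))$ is a closed convex set containing $\cl(A)$ and hence $\conv(\cl(A))$; your explicit approximation argument is an equally valid alternative.
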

\begin{proof}
It is easy to see that the closure of a convex set is convex; this yields $\conv(\cl(A))\subseteq\cl(\conv(A))$. Now suppose $\mathfrak R$ is definably complete and $A$ is definable and bounded. Then the subset
$$C := \left\{ (\lambda_1,\dots,\lambda_{n+1},x_1,\dots,x_{n+1}) :  
\lambda_i\geq 0,\ x_i\in\cl(A),\ \sum_{i=1}^{n+1} \lambda_i=1
\right\}$$
of $R^{2(n+1)}$ is definable, closed, and bounded. Hence by Proposition~\ref{prop:cbd} its image under the definable continuous map
$$(\lambda_1,\dots,\lambda_{n+1},x_1,\dots,x_{n+1})\mapsto \sum_{i=1}^{n+1} \lambda_i x_i\in R^n$$
is also closed and bounded. By Carath\'eodory's Theorem, this image is equal to $\conv(\cl(A))$. Thus $\cl(\conv(A))\subseteq\cl(\conv(\cl(A))=\conv(\cl(A))$.
\end{proof}

The next fact is also shown as in the case $R=\R$; cf.~\cite[Theorem~2.2.5]{Webster}.

\begin{lemma}[Radon's Lemma]
Each finite set of affinely dependent points in $R^n$ is a union of two disjoint sets whose convex hulls have a common point.
\end{lemma}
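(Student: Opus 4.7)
The plan is to give a direct algebraic proof, essentially identical to the classical one over $\R$, since Radon's Lemma is purely a statement about affine combinations and requires no topological or definable-completeness assumptions — only the field structure of $R$.

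First I would unwind the hypothesis: a finite set $\{x_1,\dots,x_k\}\subseteq R^n$ is affinely dependent precisely when there exist scalars $\lambda_1,\dots,\lambda_k\in R$, not all zero, with $\sum_{i=1}^k \lambda_i x_i = 0$ and $\sum_{i=1}^k \lambda_i = 0$. Partition the index set by sign: let $I=\{i:\lambda_i>0\}$ and $J=\{i:\lambda_i\leq 0\}$. Since the $\lambda_i$ are not all zero but sum to zero, both $I$ and $J$ are non-empty, and setting $s:=\sum_{i\in I}\lambda_i>0$ one has $s=-\sum_{j\in J}\lambda_j$.

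Next I would exhibit the common point. Rewriting $\sum_{i\in I}\lambda_i x_i = -\sum_{j\in J}\lambda_j x_j$ and dividing both sides by $s$, we obtain
\[
z := \sum_{i\in I} \frac{\lambda_i}{s}\, x_i \ =\ \sum_{j\in J} \frac{-\lambda_j}{s}\, x_j.
\]
Both expressions are convex combinations: the coefficients are non-negative and sum to $1$. Hence $z\in \conv\{x_i:i\in I\}\cap \conv\{x_j:j\in J\}$. Setting $A:=\{x_i:i\in I\}$ and $B:=\{x_j:j\in J\}$, the sets $A$ and $B$ are disjoint (as $I\cap J=\emptyset$ and the $x_i$ are distinct points of the given set) and their union is all of $\{x_1,\dots,x_k\}$.

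There is no real obstacle here: the argument uses only the linear algebra of $R^n$ over the field $R$, and all sets involved are finite, so questions of definability and completeness do not arise. The only point worth being careful about is extracting the defining affine relation from the notion of affine dependence, and partitioning strictly into positive and non-positive coefficients to ensure the two index sets are disjoint and both non-empty.
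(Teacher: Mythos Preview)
Your proof is correct and is exactly the classical argument the paper has in mind: the paper does not spell out a proof but simply notes that Radon's Lemma ``is also shown as in the case $R=\R$'' and cites Webster, and the standard proof there is precisely the sign-partition of an affine dependence relation that you give. Your remark that only the field structure of $R$ is used (no definable completeness, no topology) is on point and explains why the classical proof transfers verbatim.
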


As for $R=\R$, Radon's Lemma implies Theorem~\ref{thm:Helly} in the case of a finite family of convex sets; see \cite[Theorem~7.1.1]{Webster} for a proof. Given a family $\mathcal F=\{F_i\}_{i\in I}$ of sets, we say that $\mathcal F$ has the {\bf $n$-intersection property} if $F_{i_1}\cap\cdots\cap F_{i_n}\neq\emptyset$ for all $i_1,\dots,i_n\in I$, and we say that $\mathcal F$ has the {\bf finite intersection property} if $\mathcal F$ has the $n$-intersection property for some $n$.

\begin{corollary}[Helly's Theorem for finite families] \label{cor:Helly finite}
Let $A_1,\dots,A_k\subseteq R^n$ be convex. If $\{A_i\}_{i=1,\dots,k}$ has the $(n+1)$-intersection property, then $A_1\cap\cdots\cap A_k\neq\emptyset$.
\end{corollary}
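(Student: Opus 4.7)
The plan is to proceed by induction on $k$, using Radon's Lemma in the inductive step, which is the classical argument for Helly's Theorem for finite families (and, since we are dealing with finitely many sets, makes no appeal to definable completeness).

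For the base case $k \leq n+1$, there is nothing to do: the conclusion is exactly the $(n+1)$-intersection hypothesis applied to the full family. For the inductive step, suppose $k \geq n+2$ and the statement holds for every family of $k-1$ convex sets. For each $i \in \{1,\dots,k\}$, the subfamily $\{A_j\}_{j\neq i}$ still has the $(n+1)$-intersection property (any at most $n+1$ of its members are among the $A_1,\dots,A_k$), so by the inductive hypothesis I may pick a point
\[ x_i \in \bigcap_{j\neq i} A_j. \]

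Now $x_1,\dots,x_k$ are $k \geq n+2$ points in $R^n$, so they are affinely dependent. By Radon's Lemma, the index set $\{1,\dots,k\}$ splits into two disjoint subsets $I$ and $J$ with $\conv\{x_i: i\in I\} \cap \conv\{x_j : j\in J\} \neq\emptyset$; pick $z$ in this intersection. For any $i\in I$ and any $j\in J$ we have $i\neq j$, hence $x_j \in A_i$; since $A_i$ is convex this gives $\conv\{x_j : j\in J\} \subseteq A_i$, and therefore $z \in A_i$. Symmetrically, $z \in A_j$ for every $j\in J$. Thus $z \in A_1 \cap\cdots\cap A_k$, completing the induction.

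I do not expect genuine obstacles here: both ingredients (Carath\'eodory/Radon for convex hulls, and the elementary manipulations of convex combinations) have already been established over an arbitrary ordered field exactly as over $\R$. The only thing worth emphasizing in the write-up is that the inductive hypothesis is applied to families of the same type (convex, with the $(n+1)$-intersection property), so the induction really does close, and that the argument is purely combinatorial-algebraic and does not require $\mathfrak R$ to be definably complete.
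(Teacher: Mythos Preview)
Your argument is correct and is precisely the classical induction via Radon's Lemma that the paper has in mind; the paper does not spell it out but simply cites \cite[Theorem~7.1.1]{Webster} for this proof. One small point worth making explicit in the write-up: Radon's Lemma as stated in the paper partitions the \emph{set} of points, so if two of the $x_i$ happen to coincide you should either observe that the argument is then immediate (the common point already lies in every $A_l$) or remark that the index-set version follows at once from the stated version.
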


The following consequence for arbitrary families of convex sets is perhaps  well-known, but we could not locate it in the literature:

\begin{corollary}\label{cor:Helly finite, 2}
Let $\mathcal C=\{C_a\}_{a\in A}$ be a family of convex subsets of $R^n$, and suppose $p_1,\dots,p_k\in R^n$ have the property that for all $a_1,\dots,a_{n+1}\in A$ there is some $i\in\{1,\dots,k\}$ such that $p_i\in C_{a_1}\cap\cdots\cap C_{a_{n+1}}$. Then $\bigcap\mathcal C\neq\emptyset$.
\end{corollary}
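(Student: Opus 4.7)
The plan is to replace the possibly infinite family $\mathcal C$ by a finite family of sub-polytopes of $\conv\{p_1,\dots,p_k\}$, to which Corollary~\ref{cor:Helly finite} can then be applied. For each $a\in A$, I will introduce the index set $I(a):=\{i\in\{1,\dots,k\}:p_i\in C_a\}$ and the convex hull $K_a:=\conv\{p_i:i\in I(a)\}$. By convexity of $C_a$ we have $K_a\subseteq C_a$, so it will be enough to show $\bigcap_{a\in A}K_a\neq\emptyset$; note that each $K_a$ is non-empty since applying the hypothesis to the constant tuple $(a,\dots,a)$ yields some $p_i\in C_a$.

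The crucial observation is that $K_a$ depends only on $I(a)\subseteq\{1,\dots,k\}$, so the family $\{K_a\}_{a\in A}$ takes at most $2^k$ distinct values. Choosing $a_1^*,\dots,a_m^*\in A$ so that every $K_a$ coincides with some $K_{a_j^*}$, I obtain
\[
\bigcap_{a\in A} K_a \;=\; \bigcap_{j=1}^{m} K_{a_j^*},
\]
which reduces the problem to a finite intersection of convex subsets of $R^n$.

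To apply Corollary~\ref{cor:Helly finite}, I need to verify that $K_{a_1^*},\dots,K_{a_m^*}$ has the $(n+1)$-intersection property. For any indices $j_1,\dots,j_{n+1}\in\{1,\dots,m\}$, the hypothesis applied to $a_{j_1}^*,\dots,a_{j_{n+1}}^*\in A$ produces some $i$ with $p_i\in C_{a_{j_1}^*}\cap\cdots\cap C_{a_{j_{n+1}}^*}$. Then $i\in I(a_{j_l}^*)$ and hence $p_i\in K_{a_{j_l}^*}$ for every $l$, so the intersection is non-empty. Corollary~\ref{cor:Helly finite} then gives $\bigcap_{j=1}^{m}K_{a_j^*}\neq\emptyset$, and consequently $\bigcap\mathcal C\neq\emptyset$.

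The only real conceptual obstacle is identifying this reduction. Since we are working over an arbitrary ordered field (no definable completeness assumed here), there is no compactness principle that would let us pass from a finite-intersection property of the infinite family $\{C_a\}$ to a non-empty total intersection. The trick is to shrink each $C_a$ to the sub-polytope $K_a$, which forces the effective index set to be finite (parametrised by the power set of $\{1,\dots,k\}$); once this is done, the rest is a routine application of the finite Helly theorem.
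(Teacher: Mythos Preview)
Your proof is correct and is essentially the same as the paper's: your sets $K_a=\conv\{p_i:i\in I(a)\}$ are exactly the sets $\conv(C_a\cap P)$ that the paper uses, and both arguments then observe that this auxiliary family has only finitely many distinct members and satisfies the $(n+1)$-intersection property, so finite Helly applies. The paper's write-up is simply more compressed.
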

\begin{proof}
Let $P=\{p_1,\dots,p_k\}$.
Then $\mathcal P=\{ \conv(C_a\cap P) \}_{a\in A}$ is a family of convex subsets of $R^n$ with only finitely many distinct members, and by assumption, $\mathcal P$ has the $(n+1)$-intersection property. Hence $\emptyset\neq\bigcap\mathcal P\subseteq \bigcap\mathcal C$ by Corollary~\ref{cor:Helly finite}.
\end{proof}

\subsection{Convex functions}
Let $f\colon S\to R_{\pm\infty}$, where $S\subseteq R^n$.  
The {\bf epigraph of $f$} is the set 
\[
\epi(f)=\big\{ (x,t)\in R^n\times R: x\in S,\ t\geq f(x)\big\}.
\]
We say that $f$ is {\bf convex} if $\epi(f)$ is a convex subset of $R^{n+1}$, and we say that $f$ is {\bf concave} if $-f$ is convex.
Clearly if $f$ is convex, then its {\bf domain}
$$\dom(f)=\{x\in S:f(x)<+\infty\}$$ is a convex subset of $R^n$, since $\dom(f)=\pi(\epi(f))$ where $\pi\colon R^{n+1}\to R^n$ is the natural projection onto the first $n$ coordinates.
We say that $f$ is {\bf proper} if $\epi(f)$ is non-empty and contains no vertical lines, i.e., $f(x)<+\infty$ for some $x\in S$ and $f(x)>-\infty$ for all $x\in S$. Otherwise, $f$ is called {\bf improper}.

\begin{example}
Suppose $S$ is a convex subset of $R^n$ and $f(x)>-\infty$ for all $x\in S$. Then $f$ is convex if and only if for all $x,y\in S$ and $\lambda\in [0,1]$ we have
$$f(\lambda x+ (1-\lambda) y) \leq \lambda f(x) + (1-\lambda) f(y)\qquad\text{for all $\lambda\in [0,1]$,}$$
where this inequality is interpreted in $R_\infty$.
If $f$ is finite and convex, then extending $f$ by setting $f(x):=+\infty$ for $x\in R^n\setminus S$ yields a convex function $R^n\to R_\infty$ (and every proper convex function $R^n\to R_\infty$ arises in this way from the restriction to its domain). For example, the constant function $0$ on $S$ extends to a convex function $\delta_S\colon R^n\to R_\infty$ with $\delta_S|(R^n\setminus S)\equiv +\infty$, called the
{\bf indicator function of  $S$.}
\end{example}

We say that $f$ is definable if the restriction of $f$ to the set $f^{-1}(R)$ of points at which $f$ is finite  is definable (as function $f^{-1}(R)\to R$). Similarly, a family $\{f_a\}_{a\in A}$ of functions $f_a\colon S_a\to R_{\pm\infty}$ (where $A\subseteq R^m$ and $S_a\subseteq R^n$ for every $a\in A$) is called definable if the family $\{f_a|f_a^{-1}(R)\}_{a\in A}$ is definable. 



\subsection{Constructing convex functions}

{\it Throughout the rest of this section, we assume that $\mathfrak R$ is definably complete.}\/
The following lemma (which is easy to verify) shows in particular that the pointwise supremum of a definable family of convex functions is convex:

\begin{lemma}\label{lem:sup of convex functions}
Let $f\colon R^n\to R_{\pm\infty}$, and let $\{f_a\}_{a\in A}$ be a definable family of functions $f_a\colon R^n\to R_{\pm\infty}$. Then
$$f=\sup_{a\in A} f_a\quad\Longleftrightarrow\quad \epi(f)=\bigcap_{a\in A}\epi(f_a).$$ 
\end{lemma}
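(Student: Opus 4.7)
The plan is to unpack the definition of epigraph and the characterization of suprema in $R_{\pm\infty}$; no serious ideas are needed beyond bookkeeping of the $\pm\infty$ cases.

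For the forward direction, assume $f(x) = \sup_{a \in A} f_a(x)$ for every $x \in R^n$. Fix $(x,t) \in R^n \times R$. Then $(x,t) \in \epi(f)$ means $t \geq f(x)$, which by the definition of supremum in $R_{\pm\infty}$ is equivalent to $t \geq f_a(x)$ for every $a \in A$, i.e., $(x,t) \in \epi(f_a)$ for every $a$. Thus $\epi(f) = \bigcap_{a \in A} \epi(f_a)$.

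For the reverse direction, I would use the observation that for any function $g \colon R^n \to R_{\pm\infty}$ and any $x \in R^n$,
\[
g(x) = \inf\bigl\{ t \in R : (x,t) \in \epi(g) \bigr\},
\]
where the infimum is taken in $R_{\pm\infty}$ (so it equals $+\infty$ if the set is empty, i.e., $g(x) = +\infty$, and equals $-\infty$ if the set is all of $R$, i.e., $g(x) = -\infty$). Applying this to $f$ and using $\epi(f) = \bigcap_a \epi(f_a)$, one gets
\[
f(x) = \inf\bigl\{ t \in R : t \geq f_a(x) \text{ for all } a \in A \bigr\},
\]
and this last infimum is precisely $\sup_{a \in A} f_a(x)$, again by the definition of supremum in $R_{\pm\infty}$ (splitting into the cases where the sup is finite, $-\infty$, or $+\infty$, and noting that the corresponding vertical slice of $\bigcap_a \epi(f_a)$ is $[\sup_a f_a(x), +\infty)$, $R$, or $\emptyset$ respectively).

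The only mild subtlety — and the closest thing to an obstacle — is carefully tracking the conventions when some $f_a(x)$ takes the values $\pm\infty$, since the epigraph is taken inside $R^n \times R$ (not $R^n \times R_{\pm\infty}$). Once the three cases for the value of $\sup_a f_a(x)$ are matched with the three possibilities for the $x$-slice of $\bigcap_a \epi(f_a)$, both implications become immediate, so no appeal to definable completeness (or anything beyond the order structure on $R_{\pm\infty}$) is actually required.
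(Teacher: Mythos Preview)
Your proof is correct and matches the paper's intent: the paper itself gives no proof, merely noting that the lemma ``is easy to verify,'' and your direct unpacking of the epigraph definition (with the case analysis on $\pm\infty$) is exactly the kind of verification the authors have in mind. Your observation that definable completeness is not actually needed is also correct---the hypothesis $\epi(f)=\bigcap_a\epi(f_a)$ already furnishes $f(x)$ as a candidate least upper bound, so no existence of suprema needs to be invoked separately.
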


The next lemma is also easily proved; it allows the construction of convex functions from fibers of definable convex sets:

\begin{lemma}
Let $C$ be a convex definable subset of $R^{n+1}$. Then  $f\colon R^n\to R_{\pm\infty}$ defined by $f(x) = \inf C_x$ is convex with domain $\pi(C)$, where $\pi\colon R^{n+1}\to R^n$ is the projection onto the first $n$ coordinates.
\end{lemma}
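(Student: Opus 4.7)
The plan is to verify that $\epi(f)$ is convex (which by definition makes $f$ convex) and separately to compute $\dom(f)$.

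For the domain: under the conventions from Section~\ref{sec:defcompleteness}, $\inf C_x = +\infty$ precisely when $C_x = \emptyset$, which happens precisely when $x \notin \pi(C)$. Hence $\dom(f) = \{x : f(x) < +\infty\} = \pi(C)$.

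For convexity of $\epi(f)$, I would take two points $(x_1,t_1), (x_2,t_2) \in \epi(f)$ and $\lambda \in [0,1]$, and show that $z := (\lambda x_1 + (1-\lambda) x_2,\ \lambda t_1 + (1-\lambda) t_2) \in \epi(f)$. Given $\varepsilon > 0$, I would use the definition of infimum together with $t_i \geq \inf C_{x_i}$ to pick $s_i \in C_{x_i}$ with $s_i \leq t_i + \varepsilon$; then $(x_i, s_i) \in C$, so by convexity of $C$ the point $(\lambda x_1 + (1-\lambda)x_2,\ \lambda s_1 + (1-\lambda) s_2)$ lies in $C$. This forces $f(\lambda x_1 + (1-\lambda) x_2) \leq \lambda s_1 + (1-\lambda) s_2 \leq \lambda t_1 + (1-\lambda) t_2 + \varepsilon$, and letting $\varepsilon \to 0^+$ gives $z \in \epi(f)$.

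The only minor subtlety is that $\inf C_{x_i}$ may equal $-\infty$, in which case the phrase ``pick $s_i \in C_{x_i}$ close to $\inf C_{x_i}$'' needs interpretation. But what is actually needed is only some $s_i \in C_{x_i}$ with $s_i \leq t_i + \varepsilon$, which is available since $C_{x_i} \neq \emptyset$ and $\inf C_{x_i} \leq t_i \in R$ (and definable completeness of $\mathfrak{R}$ guarantees that $\inf C_x$ is a well-defined element of $R_{\pm\infty}$ in the first place). I do not foresee a real obstacle beyond this small piece of bookkeeping.
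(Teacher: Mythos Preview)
Your argument is correct and is exactly the standard verification the paper has in mind; indeed the paper states only that the lemma ``is also easily proved'' and gives no details, so your epigraph computation and domain identification are precisely what is needed. The bookkeeping you flag (the case $\inf C_{x_i}=-\infty$) is handled correctly: all you need is some $s_i\in C_{x_i}$ with $s_i\leq t_i+\varepsilon$, which follows from $\inf C_{x_i}\leq t_i<t_i+\varepsilon$.
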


Let now $f\colon R^n\to R_{\pm\infty}$ be definable, and let $A\colon R^n\to R^m$ be $R$-linear. We denote the definable function
$$x\mapsto \inf\big\{f(y):A(y)=x\big\}\colon R^m\to R_{\pm\infty}$$
by $Af$. Applying the lemma above to  $C=(A\times\id)(\epi(f))$ yields:

\begin{lemma}
Suppose $f$ is convex. Then  $Af$ is convex with domain $A(\dom(f))$.
\end{lemma}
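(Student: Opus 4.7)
The approach is the one suggested by the parenthetical in the statement: apply the preceding lemma to
$$C := (A\times\id)\bigl(\epi(f)\bigr) \subseteq R^{m+1},$$
where $\id\colon R\to R$ is the identity, so that $A\times\id\colon R^{n+1}\to R^{m+1}$.

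First I will verify that $C$ is a definable convex subset of $R^{m+1}$. Definability is automatic, since $A$ is $R$-linear (hence definable) and $f$ is definable, so both $\epi(f)$ and its image under $A\times\id$ are definable. For convexity, I use that $\epi(f)$ is convex (because $f$ is) together with the elementary fact that the image of a convex set under an $R$-linear (hence affine) map is convex.

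Next I will check that the inf-of-fibers construction from the previous lemma recovers $Af$. For $x\in R^m$, unraveling the definition gives
$$C_x = \bigl\{t\in R : \exists\, y\in R^n\ \text{with}\ A(y)=x\ \text{and}\ t\geq f(y)\bigr\},$$
so $\inf C_x = \inf\{f(y):A(y)=x\} = (Af)(x)$, with the standing convention $\inf\emptyset=+\infty$ (which matches both sides when no preimage of $x$ exists). The projection $\pi(C)\subseteq R^m$ onto the first $m$ coordinates consists of those $x\in R^m$ for which at least one $y\in R^n$ satisfies both $A(y)=x$ and $f(y)<+\infty$; that is, $\pi(C)=A(\dom f)$.

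Applying the preceding lemma to this $C$ yields at once that $Af$ is convex with $\dom(Af) = A(\dom f)$, completing the proof. There is no real obstacle: the argument is entirely a bookkeeping translation, and the single conceptual input is the preservation of convexity under $R$-linear images, which is immediate from the definition of convexity.
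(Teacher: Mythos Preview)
Your proof is correct and follows exactly the approach indicated in the paper, which simply says ``Applying the lemma above to $C=(A\times\id)(\epi(f))$ yields'' the result. You have filled in the routine verifications (definability and convexity of $C$, the identification $\inf C_x=(Af)(x)$, and $\pi(C)=A(\dom f)$) that the paper leaves implicit.
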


Let  $f,g\colon R^n\rightarrow R_\infty$ be definable.
The (infimal) {\bf convolution} $f\boxempty g\colon R^n\rightarrow R_{\pm\infty}$ of $f$ and $g$ is defined by 
\[
(f\boxempty g)(x):=\inf_{y\in R^n} \big(f(y)+g(x-y)\big)\qquad\text{for $x\in R^n$.}
\]
By the previous lemma,
if $f$ and $g$ are convex, then $f\boxempty g$ is convex, with domain $\dom(f)+\dom(g)$. (However, if $f$ and $g$ are proper, $f\boxempty g$ may fail to be proper, as the example $f=\id_R$, $g=-\id_R$ shows.)
Note that for every $x\in R^n$,
$$(f\boxempty g)(x) = \inf\big\{s+t: (y,s)\in\epi(f),\ (z,t)\in\epi(g),\ y+z=x\big\}$$
and hence
\begin{equation}\label{eq:epi of convolution}
\epi(f)+\epi(g) \subseteq \epi(f\boxempty g)\subseteq \cl(\epi(f)+\epi(g)).
\end{equation}

\subsection{The distance function and the metric projection}
After this digression on convex functions, we return to the study of convex sets.
{\it For the rest of this section we fix a non-empty convex closed definable subset $C$ of $R^n$.}
Recall that the distance from $x\in R^n$ to $C$ is defined by
$$d(x,C) = \inf\big\{|| x-c ||:c\in C\big\}.$$
We have:

\begin{lemma}\label{lem:convex}
The function
$x\mapsto d(x,C)\colon R^n\to R$
is convex.
\end{lemma}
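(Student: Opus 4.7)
The plan is to give two independent proofs, both short. The cleanest is via infimal convolution: observe that
$$d(x,C) = \inf_{c\in C}\norm{x-c} = \inf_{y\in R^n}\bigl(\norm{y} + \delta_C(x-y)\bigr) = \bigl(\norm{\cdot}\boxempty\delta_C\bigr)(x),$$
so $d(\cdot,C)$ is the infimal convolution of the two functions $\norm{\cdot}\colon R^n\to R$ and $\delta_C\colon R^n\to R_\infty$. The Euclidean norm is convex by the triangle inequality (or directly from the definition of the epigraph as a convex cone), and $\delta_C$ is convex because $C$ is convex. By the convolution lemma just recorded, the infimal convolution of two convex functions is convex, which gives the result.

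As an alternative (more elementary) approach, I would argue directly from the definition of convexity of a function. Given $x_0,x_1\in R^n$ and $\lambda\in[0,1]$, I would use Corollary~\ref{cor:nearest point} (available since $C$ is closed, non-empty, and definable) to pick nearest points $c_i\in C$ with $\norm{x_i - c_i} = d(x_i,C)$ for $i=0,1$. Since $C$ is convex, $c_\lambda := \lambda c_1 + (1-\lambda)c_0\in C$, and so by the triangle inequality
$$d\bigl(\lambda x_1 + (1-\lambda)x_0,\,C\bigr) \leq \bigl\lVert \lambda(x_1-c_1) + (1-\lambda)(x_0-c_0)\bigr\rVert \leq \lambda d(x_1,C) + (1-\lambda)d(x_0,C).$$
This is exactly the convexity inequality for the finite function $d(\cdot,C)$ on the convex set $R^n$, which suffices by the example characterization of convex functions with $S=R^n$.

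There is no real obstacle; both arguments are standard, and both use only tools already developed in the paper (closure of $C$ ensures attainment of the infimum in the second proof, and the convolution lemma requires only convexity of the two summands in the first). I would probably present the direct proof in the paper since it is self-contained and avoids invoking the indicator function formalism for a one-line verification.
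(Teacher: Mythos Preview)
Your first proof is exactly the paper's proof: it expresses $d(\cdot,C)$ as $\norm{\cdot}\boxempty\delta_C$ and invokes the convolution lemma. Your alternative direct argument is also correct, but contrary to your stated preference the paper opts for the one-line convolution identification rather than the nearest-point computation.
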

\begin{proof}
The function $d(\,\cdot\, , C)$ may be expressed as the convolution of the Euclidean norm and the indicator function of $C$.
\end{proof}


\begin{lemma} For every $x\in R^n$, there is a unique element of $C$ of smallest distance to $x$.
\end{lemma}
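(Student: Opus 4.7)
My plan is to split the statement into existence and uniqueness. Existence is already in hand: Corollary~\ref{cor:nearest point} (applied to the definable closed set $C$) produces a point $y_0\in C$ with $\norm{x-y_0}=d(x,C)$, so nothing new is needed there.

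For uniqueness, I would argue by the classical strict-convexity-of-the-Euclidean-norm trick, which transfers verbatim to our setting because $R$ is a real closed ordered field and the parallelogram identity is a purely algebraic statement. Suppose $y_1,y_2\in C$ both satisfy $\norm{x-y_1}=\norm{x-y_2}=d(x,C)=:r$. Since $C$ is convex, the midpoint $m:=\frac{1}{2}(y_1+y_2)$ lies in $C$. Using the polarization/parallelogram identity
\begin{equation*}
\norm{x-m}^2 \;=\; \tfrac{1}{2}\norm{x-y_1}^2 + \tfrac{1}{2}\norm{x-y_2}^2 - \tfrac{1}{4}\norm{y_1-y_2}^2 \;=\; r^2 - \tfrac{1}{4}\norm{y_1-y_2}^2,
\end{equation*}
one finds $\norm{x-m}^2\leq r^2$, with equality only when $y_1=y_2$. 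If $y_1\neq y_2$ then $\norm{x-m}<r$, contradicting the definition of $r=d(x,C)$ since $m\in C$. Hence $y_1=y_2$.

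There is no real obstacle: the only ingredient beyond high-school algebra is existence of a nearest point, which is Corollary~\ref{cor:nearest point}, and the convexity of $C$. Definable completeness of $\mathfrak R$ is used only through that corollary; the uniqueness half is a formal identity valid over any ordered field. I would present the proof in two short sentences (one for existence citing Corollary~\ref{cor:nearest point}, one for uniqueness via the parallelogram identity).
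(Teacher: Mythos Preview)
Your proposal is correct and matches the paper's own proof almost verbatim: existence via Corollary~\ref{cor:nearest point}, and uniqueness by taking the midpoint $z=\tfrac{1}{2}(y_1+y_2)\in C$ and observing $\norm{x-z}<\norm{x-y_1}$ unless $y_1=y_2$. The only difference is that you spell out the parallelogram identity explicitly, whereas the paper leaves that inequality as a one-line remark.
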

\begin{proof}
We have existence by Corollary~\ref{cor:nearest point}. For uniqueness,
let $x\in R^n$, and suppose $y_1,y_2\in C$ are both nearest points of $C$ to $x$. Then $z:=\frac{1}{2}(y_1+y_2)\in C$ and $||x-z||<||x-y_1||$ except if $y_1=y_2$.
\end{proof}

Given $x\in R^n$, we denote the unique nearest point to $x$ in $C$ by $p(x,C)$. The map $x\mapsto p(x,C)\colon R^n\to C$ is called the (metric) {\bf projection}\/ of $C$. Note that
$$d(x,C)=||x-p(x,C)||=\min \big\{|| x-y ||:y\in C\big\}.$$

\begin{lemma}\label{lem:pyth}
For all $x\in R^n$ and $z\in C$ we have
$$\big\langle x-p(x,C), z-p(x,C)\big\rangle \leq 0.$$
\end{lemma}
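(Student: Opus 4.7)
The plan is to use the standard variational argument based on the convexity of $C$ and the defining minimality property of $p(x,C)$, all of which go through verbatim in the definably complete setting (no limits beyond one-sided limits in the ordered field $R$ are needed).

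Fix $x\in R^n$ and $z\in C$, and abbreviate $p:=p(x,C)$. The key observation is that the line segment $[p,z]$ lies in $C$ by convexity, so for every $\lambda\in[0,1]$ the point
\[
z_\lambda := p+\lambda(z-p)=\lambda z+(1-\lambda)p
\]
belongs to $C$. By the defining property of $p$ we have $\|x-z_\lambda\|^2\geq\|x-p\|^2$ for all such $\lambda$.

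Next I would expand the left-hand side using bilinearity of $\langle\cdot,\cdot\rangle$:
\[
\|x-z_\lambda\|^2=\|(x-p)-\lambda(z-p)\|^2=\|x-p\|^2-2\lambda\langle x-p,z-p\rangle+\lambda^2\|z-p\|^2.
\]
Subtracting $\|x-p\|^2$ from both sides of the inequality yields, for every $\lambda\in(0,1]$,
\[
-2\lambda\langle x-p,z-p\rangle+\lambda^2\|z-p\|^2\geq 0,
\]
and dividing by $\lambda>0$ gives
\[
2\langle x-p,z-p\rangle\leq\lambda\,\|z-p\|^2.
\]

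The final step is to let $\lambda\to 0^+$. Since $\|z-p\|^2$ is a fixed element of $R$ and the inequality above holds for all sufficiently small positive $\lambda\in R$, it follows that $2\langle x-p,z-p\rangle\leq 0$, which is the claim. The only possible obstacle would be some appeal to the Archimedean property, but none is needed: the implication ``$c\leq\lambda d$ for all $\lambda\in R^{>0}$ implies $c\leq 0$'' holds in any ordered field (take $\lambda\to 0$ via $\lambda=1, 1/2, 1/4,\dots$ or, more directly, note that if $c>0$ then $\lambda=c/(2d)$ with $d>0$ gives a contradiction, while $d=0$ makes the conclusion immediate).
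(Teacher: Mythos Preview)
Your proof is correct and follows essentially the same variational argument as the paper: parametrize the segment $[p,z]$ by $z_\lambda=\lambda z+(1-\lambda)p$, expand $\|x-z_\lambda\|^2$, subtract $\|x-p\|^2$, divide by $\lambda$, and let $\lambda\to 0^+$. Your added remark justifying the final limit step in an arbitrary ordered field is a helpful clarification but not a departure from the paper's method.
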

\begin{proof}
Let $x\in R^n$, $z\in C$, and put $p:=p(x,C)$. For $0<\lambda\leq 1$ set
$z_\lambda := \lambda\,z+(1-\lambda)\,p$.
Then $z_\lambda\in C$ and hence
$$\norm{x-p}^2 \leq \norm{x-z_\lambda}^2 = \norm{(x-p)+\lambda(z-p)}^2.$$
Subtracting $\norm{x-p}^2$ yields
$0\leq \lambda^2\norm{z-p}^2-2\lambda\langle x-p,z-p\rangle$.
Dividing by $\lambda$ and taking $\lambda\to 0$ yields the lemma.
\end{proof}

\begin{corollary}\label{cor:non-expansive}
The projection $p(\,\cdot\, , C)$ is firmly non-expansive.
\end{corollary}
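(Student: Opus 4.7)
The plan is to prove the firmly non-expansive inequality
$$\|p(x,C)-p(y,C)\|^2 \leq \langle p(x,C)-p(y,C),\, x-y\rangle \qquad\text{for all } x,y\in R^n$$
by applying Lemma~\ref{lem:pyth} twice, once at $x$ and once at $y$, and combining the resulting variational inequalities.

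More precisely, write $p_x:=p(x,C)$ and $p_y:=p(y,C)$; both lie in $C$. Applying Lemma~\ref{lem:pyth} to the point $x$ with $z=p_y\in C$ yields $\langle x-p_x,\,p_y-p_x\rangle\leq 0$, which I rearrange to
$$\langle x-p_x,\,p_x-p_y\rangle \geq 0.$$
Analogously, Lemma~\ref{lem:pyth} applied to $y$ with $z=p_x\in C$ gives $\langle y-p_y,\,p_x-p_y\rangle\leq 0$, i.e.
$$-\langle y-p_y,\,p_x-p_y\rangle \geq 0.$$

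Adding these two inequalities, the left-hand side collapses to
$$\big\langle (x-p_x)-(y-p_y),\,p_x-p_y\big\rangle = \langle x-y,\,p_x-p_y\rangle - \|p_x-p_y\|^2 \geq 0,$$
which rearranges to the desired firm non-expansiveness of $p(\,\cdot\,,C)$. There is no real obstacle here; the only ingredients are Lemma~\ref{lem:pyth} (which encodes the obtuse-angle characterization of the projection onto a convex set) and elementary bilinearity of $\langle\,\cdot\,,\,\cdot\,\rangle$, so the argument is essentially the same as in the classical case $R=\R$.
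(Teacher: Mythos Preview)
Your proof is correct and follows essentially the same approach as the paper: apply Lemma~\ref{lem:pyth} once with $(x,z)=(x,p(y,C))$ and once with $(x,z)=(y,p(x,C))$, then add the two inequalities. The paper states this in a single sentence; you have simply written out the algebra in full.
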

\begin{proof}
Let $x,y\in R^n$; we need to show that $$\langle x-y,p(x,C)-p(y,C)\rangle\geq \norm{p(x,C)-p(y,C)}^2.$$
To see this apply Lemma~\ref{lem:pyth} to $(x,p(y,C))$ and $(y,p(x,C))$ in place of $(x,z)$, respectively, and add the resulting inequalities.
\end{proof}

The following lemma is used in the proof of Theorem~\ref{thm:Helly} in the next section:

\begin{lemma}\label{lem:lem1}
Suppose $C$ is bounded, and let $x\in R^n\setminus C$, $p=p(x,C)$, and $z\in (x,p]$. Then there is a $\delta>0$ such that
$||z-c||\leq ||x-c||-\delta$
for every $c\in C$.
\end{lemma}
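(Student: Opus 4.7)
Plan. Parametrize the segment: since $z \in (x,p]$, write $z = (1-\lambda)x + \lambda p$ for some $\lambda \in (0,1]$, so that $x - z = \lambda(x - p)$ and $z - p = (1-\lambda)(x - p)$. The goal is to get a uniform positive lower bound on $\|x - c\| - \|z - c\|$ as $c$ ranges over $C$.

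First I would compute $\|x - c\|^2 - \|z - c\|^2$ directly. Using $x - c = (x - z) + (z - c)$,
\[
\|x - c\|^2 - \|z - c\|^2 = \|x - z\|^2 + 2\langle x - z,\, z - c\rangle.
\]
Writing $z - c = (z - p) + (p - c) = (1-\lambda)(x-p) + (p - c)$ and $x - z = \lambda(x - p)$ and then invoking Lemma~\ref{lem:pyth} (which, applied to the variable point $c \in C$, says $\langle x - p, c - p\rangle \leq 0$, equivalently $\langle x - p, p - c\rangle \geq 0$), one gets
\[
\langle x - z, z - c\rangle = \lambda(1-\lambda)\|x - p\|^2 + \lambda\langle x - p, p - c\rangle \geq \lambda(1-\lambda)\|x - p\|^2.
\]
Combining yields $\|x - c\|^2 - \|z - c\|^2 \geq \lambda(2-\lambda)\|x - p\|^2 =: \alpha$, a positive constant independent of $c$ (positive because $x \notin C$ forces $p \neq x$, and $\lambda \in (0,1]$).

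Finally I would factor the left-hand side as a difference of squares. Because $C$ is bounded, there is some $M > 0$ with $C \subseteq \overline{B}_M(0)$; then $\|x - c\| + \|z - c\| \leq 2M + \|x\| + \|z\| =: K$ for all $c \in C$. Therefore
\[
\|x - c\| - \|z - c\| \geq \frac{\|x-c\|^2 - \|z-c\|^2}{\|x-c\| + \|z-c\|} \geq \frac{\alpha}{K} =: \delta > 0,
\]
which is the desired estimate. (One should note that $\|x-c\| + \|z-c\| > 0$ since $x \neq c$.) There is no real obstacle here: the only subtlety is that one needs both the convex-geometric inequality from Lemma~\ref{lem:pyth} (to eliminate dependence on $c$ in the cross-term) and the boundedness of $C$ (to control the denominator $\|x-c\|+\|z-c\|$ when passing from the square inequality to the linear one); dropping boundedness would in general only give $\|x - c\|^2 - \|z - c\|^2 \geq \alpha$, which is strictly weaker.
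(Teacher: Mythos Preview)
Your argument is correct. It differs from the paper's proof in approach: the paper argues by contradiction, assuming no such $\delta$ exists, and then uses the definable monotone-family compactness principle (Lemma~\ref{lem:monotone}) to produce a single point $c\in C$ with $\|z-c\|\geq\|x-c\|$, finally deriving a contradiction via Pythagoras and the location of the foot of the perpendicular from $c$ onto the line through $x$ and $p$. Your proof is direct and quantitative: you expand $\|x-c\|^2-\|z-c\|^2$, invoke Lemma~\ref{lem:pyth} to kill the cross-term, obtain the uniform lower bound $\alpha=\lambda(2-\lambda)\|x-p\|^2>0$, and then pass from squares to first powers using boundedness of $C$. The advantage of your route is that it is explicit (you can read off $\delta=\alpha/K$) and it avoids the compactness-style Lemma~\ref{lem:monotone} entirely, so the argument works over any ordered field once Lemma~\ref{lem:pyth} is available; the paper's route is shorter to write but leans on the definable completeness machinery.
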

\begin{proof}
Suppose not. Then for every $\delta>0$ the set
$$C_\delta := \big\{ c\in C : ||z-c||\geq ||x-c||-\delta\big\}$$
is non-empty, and so we have a decreasing  definable family $\{C_\delta\}_{\delta>0}$ of closed and bounded non-empty sets. Hence by Lemma~\ref{lem:monotone} there is some $c\in C$ with $||z-c||\geq ||x-c||$. Let $a$ be the point of smallest distance to $c$ on the line through $x$ and $p$. 
Then Pythagoras yields $||a-z||\geq ||a-x||$, a contradiction to $x\neq z$.
\end{proof}

\subsection{Supporting hyperplanes}
Given $p,u\in R^n$, $u\neq 0$, and $\alpha\in R$ we write
$$H_{u,p}=\big\{y\in R^n: \langle y,u\rangle = \langle p,u\rangle \big\}$$
for the hyperplane in $R^n$ through $p$ orthogonal to $u$. Note that if $\langle p,u\rangle=\langle p',u\rangle$ then $H_{u,p}=H_{u,p'}$, and we sometimes write $H_{u,\alpha}$ for $H_{u,p}$, where $\alpha=\langle p,u\rangle$.
Given a hyperplane  $H=H_{u,\alpha}$, we write
$$H^+ = \big\{y\in R^n: \langle y,u\rangle \geq \alpha \big\}, \quad
  H^- = \big\{y\in R^n: \langle y,u\rangle \leq \alpha \big\}$$
for the two closed halfspaces bounded by $H$. 

Let $S\subseteq R^n$. Given a hyperplane $H=H_{u,\alpha}$ and a point $x\in R^n$, we say that {\bf $H$ supports $S$ at $x$} if $x\in S\cap H$ and $S\subseteq H^+$ or $S\subseteq H^-$. (In this case necessarily $x\in\bd(S)$.)

\begin{lemma}\label{lem:supporting hyperplane}
Let $x\in R^n\setminus C$.
The hyperplane $H=H_{u,p}$ through $p=p(x,C)$ orthogonal to $u=x-p$ supports $C$ at $p$, and $C$ is contained in the halfspace $H^-$ bounded by $H$ which does not contain $x$.
\end{lemma}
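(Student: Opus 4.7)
The plan is to read off the claim directly from Lemma~\ref{lem:pyth}, which provides the key inequality $\langle x-p, z-p\rangle \leq 0$ for all $z\in C$. First I would set $u:=x-p$ and note that because $x\notin C$ but $p\in C$, we have $x\neq p$, so $u\neq 0$ and the hyperplane $H=H_{u,p}$ is well-defined. By definition $p\in H$, and $p\in C$, so $p\in C\cap H$.

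Next, to establish $C\subseteq H^-$, I would rewrite Lemma~\ref{lem:pyth} as
\[
\langle u, z\rangle \;\leq\; \langle u, p\rangle \qquad\text{for every $z\in C$,}
\]
which is precisely the defining inequality of $H^-=\{y\in R^n:\langle y,u\rangle\leq \langle p,u\rangle\}$. Hence $C\subseteq H^-$, and $H$ supports $C$ at $p$.

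Finally, to see that $x$ does not lie in $H^-$ (so that $H^-$ is indeed the halfspace not containing $x$), I would simply compute
\[
\langle u, x\rangle \;=\; \langle u, p+u\rangle \;=\; \langle u, p\rangle + \norm{u}^{2} \;>\; \langle u, p\rangle,
\]
since $\norm{u}^{2}>0$. Thus $x\in H^+\setminus H$ and the desired halfspace $H^-$ is the one not containing $x$.

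There is no real obstacle here: every piece is immediate once the orthogonality relation of Lemma~\ref{lem:pyth} is in hand, and no appeal to definable completeness is needed beyond what was already used to produce $p=p(x,C)$ in the first place.
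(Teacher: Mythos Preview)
Your proof is correct and follows essentially the same approach as the paper: both deduce $C\subseteq H^-$ directly from Lemma~\ref{lem:pyth} rewritten as $\langle z,u\rangle\leq\langle p,u\rangle$, and then observe that $x\neq p$ forces the strict inequality $\langle p,u\rangle<\langle x,u\rangle$ placing $x$ outside $H^-$. Your write-up is simply a more expanded version of the paper's one-line argument.
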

\begin{proof}
By Lemma~\ref{lem:pyth} and since $x\neq p$, for every $y\in C$ we have
$\langle y,x-p\rangle \leq \langle x-p,p\rangle < \langle x, x-p\rangle$,
and this yields the lemma.
\end{proof}

In particular, the previous lemma implies that if
$C\neq R^n$, then $C$ is the intersection of all closed halfspaces which contain $C$. 

\begin{corollary}\label{cor:support}
For each $p\in\bd(C)$ there exists some $y\in\bd(B_1(p))$ with $p=p(y,C)$. Hence for every $p\in\bd(C)$ there is a hyperplane that supports $C$ at $p$.
\end{corollary}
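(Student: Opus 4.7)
The plan is to construct a unit vector $u \in R^n$ such that $y := p + u$ satisfies $p(y, C) = p$; the supporting-hyperplane statement then follows immediately by applying Lemma~\ref{lem:supporting hyperplane} to $y$. Since $C$ is closed and $p \in \bd(C)$, we have $p \in C$ while $\ol{B}_\varepsilon(p) \not\subseteq C$ for every $\varepsilon > 0$. The function $x \mapsto d(x, C)$ is continuous (Corollary~\ref{cor:distance is non-expansive}), so Corollary~\ref{cor:minmax} guarantees that it attains a strictly positive maximum on the closed bounded definable set $\ol{B}_\varepsilon(p)$. Applying Lemma~\ref{lem:defchoice} to the definable family (indexed by $\varepsilon > 0$) of non-empty closed bounded maximizer sets, I obtain a definable selection $\varepsilon \mapsto x_\varepsilon \in \ol{B}_\varepsilon(p) \setminus C$. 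Setting $q_\varepsilon := p(x_\varepsilon, C)$, I note that $p \in C$ gives $\|q_\varepsilon - x_\varepsilon\| = d(x_\varepsilon, C) \leq \|x_\varepsilon - p\| \leq \varepsilon$, so $\|q_\varepsilon - p\| \leq 2\varepsilon$, and in particular $q_\varepsilon \to p$ as $\varepsilon \to 0^+$.

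Next, I define the unit vector $u_\varepsilon := (x_\varepsilon - q_\varepsilon)/\|x_\varepsilon - q_\varepsilon\|$. Lemma~\ref{lem:supporting hyperplane}, applied to $x_\varepsilon$, yields the key inequality $\langle c - q_\varepsilon, u_\varepsilon \rangle \leq 0$ for every $c \in C$. The sequence-map $t \mapsto u_{1/t}$ on $R^{\geq 1}$ takes values in the closed bounded unit sphere $\bd(B_1(0))$, so by the definable Bolzano--Weierstrass Theorem (Proposition~\ref{prop:BW}) it has an accumulation point $u$, which is necessarily a unit vector. Fixing a subsequence-map along which $u_\varepsilon \to u$, for each $c \in C$ the identity
\[ \langle c - p, u \rangle = \langle c - q_\varepsilon, u_\varepsilon \rangle + \langle c - p, u - u_\varepsilon \rangle + \langle q_\varepsilon - p, u_\varepsilon \rangle \]
exhibits $\langle c - p, u \rangle$ as a sum whose first term is non-positive and whose last two terms are bounded in absolute value by $\|c - p\|\,\|u - u_\varepsilon\| + \|q_\varepsilon - p\|$, which tends to $0$ along the chosen subsequence-map; hence $\langle c - p, u \rangle \leq 0$.

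Finally, I set $y := p + u$, so $y \in \bd(B_1(p))$ and $y \neq p$. For any $c \in C$,
\[ \|y - c\|^2 = \|u - (c - p)\|^2 = 1 - 2\langle u, c - p \rangle + \|c - p\|^2 \geq 1 = \|y - p\|^2, \]
with equality only if $c = p$; thus $p$ is the unique nearest point of $C$ to $y$, i.e., $p(y, C) = p$. Since $y \notin C$, Lemma~\ref{lem:supporting hyperplane} applied to $y$ then produces a hyperplane supporting $C$ at $p$. The main technical hurdle is ensuring that the supporting-hyperplane inequality survives the limiting procedure; this relies on having the definable Bolzano--Weierstrass Theorem available on the unit sphere, together with the automatic convergence $q_\varepsilon \to p$ forced by $p \in C$.
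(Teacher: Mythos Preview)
Your argument is correct. Both proofs share the same underlying idea---approximate $p$ by nearby exterior points $x_\varepsilon$, look at the associated outward normal directions, and pass to a limit on the unit sphere---but the technical implementation differs. The paper extends the ray from $p(x,C)$ through $x$ until it meets the sphere $S=\bd(B_1(p))$, observes that this does not change the projection, and then applies Lemma~\ref{lem:monotone} directly to the nested definable family $S_\varepsilon=\{y\in S:\ol B_\varepsilon(p)\cap[p(y,C),y]\neq\emptyset\}$ to produce the desired $y$. You instead make a definable selection of the $x_\varepsilon$ via Lemma~\ref{lem:defchoice} (using maximizers of $d(\,\cdot\,,C)$ so that the selection sets are closed), form the unit normals $u_\varepsilon$, and invoke the definable Bolzano--Weierstrass Theorem (Proposition~\ref{prop:BW}) to obtain an accumulation point $u$; you then verify the supporting-halfspace inequality in the limit by an explicit computation. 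Your route is slightly longer and uses the higher-level tool Proposition~\ref{prop:BW} (which itself rests on Lemma~\ref{lem:monotone}), whereas the paper's route is a bit more geometric and applies Lemma~\ref{lem:monotone} directly; on the other hand, your approach makes the limiting halfspace inequality completely explicit, which may read as more transparent.
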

\begin{proof}
Let $p\in\bd(C)$. For every $\varepsilon$ with $0<\varepsilon<1$ there is some $x\in \ol{B}_\varepsilon(p)\setminus C$; then $||p-p(x,C)||\leq ||p-x||<\varepsilon$ by Corollary~\ref{cor:non-expansive}, and since the distinct points $x$ and $p(x,C)$ are in $B_1(p)$, there is a (unique) $y$ on the sphere $S:=\bd(B_1(p))$ with $x\in [p(x,C),y]$. By Lemma~\ref{lem:supporting hyperplane} we have $p(x,C)=p(y,C)$. This means that for every $\varepsilon>0$ the definable set
$$S_\varepsilon := \big\{ y\in S: \ol{B}_\varepsilon(p) \cap [p(y,C),y]\neq\emptyset \big\}$$
is non-empty. It is easily verified that each $S_\varepsilon$ is closed and bounded. By Lemma~\ref{lem:monotone} take $y\in\bigcap_{\varepsilon>0} S_\varepsilon$. Then for every $\varepsilon>0$ there is some $x\in [p(y,C),y]$ with $||x-p||\leq\varepsilon$; hence $||p(y,C)-p||=||p(x,C)-p||\leq ||x-p||\leq\varepsilon$. Thus $p=p(y,C)$.
\end{proof}

\begin{remark}
The existence of a supporting hyperplane through every boundary point characterizes convex sets among definable closed subsets of $R^n$ with non-empty interior; this can be shown as in the case $R=\R$, see, e.g., \cite[Theorem~1.3.3]{Schneider}.
\end{remark}

\subsection{Separating hyperplanes}
Let $A,B\subseteq R^n$ and let $H=H_{u,\alpha}$ be a hyperplane. We say that $H$ {\bf separates $A$ and $B$} if $A\subseteq H^-$ and $B\subseteq H^+$, or vice versa. If there is a hyperplane separating $A$ and $B$, we say that $A$ and $B$ can be separated.

\begin{proposition}\label{prop:separation}
Let $S\subseteq R^n$ be definable, non-empty, and convex, and let $x\in R^n\setminus S$.
Suppose $S$ is closed, or $S$ is open. Then $S$ and $\{x\}$ can be separated.\end{proposition}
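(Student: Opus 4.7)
The plan is to treat the two hypotheses --- $S$ closed and $S$ open --- separately, since in the closed case the metric projection onto $S$ is directly available, whereas in the open case one must first pass to $\cl(S)$ and locate a supporting hyperplane at a boundary point.

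Suppose first that $S$ is closed. By Corollary~\ref{cor:nearest point} there is a nearest point $p := p(x,S) \in S$ to $x$, and since $x \notin S$ the vector $u := x - p$ is nonzero. Lemma~\ref{lem:supporting hyperplane} then directly produces a hyperplane $H = H_{u,p}$ with $S \subseteq H^-$ and $x \in H^+$; the identity $\langle x - p, u \rangle = \norm{u}^2 > 0$ even shows $x \notin H$, so the separation is strict.

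Now suppose $S$ is open. If $x \notin \cl(S)$, the closed case applied to the closed convex definable set $\cl(S)$ already separates $\cl(S) \supseteq S$ from $\{x\}$. So assume $x \in \cl(S) \setminus S$; I plan to apply Corollary~\ref{cor:support} to $\cl(S)$ at $x$, for which I need $x \in \bd(\cl(S))$, equivalently $\interior(\cl(S)) \subseteq S$. The key technical input is the \emph{line segment principle}: for $a \in S$, $b \in \cl(S)$, and $\lambda \in [0,1)$, the point $c := (1-\lambda)a + \lambda b$ lies in $S$. I would prove this by fixing $r > 0$ with $B_r(a) \subseteq S$, picking $b' \in S$ sufficiently close to $b$ (possible since $b \in \cl(S)$), and writing an arbitrary $y \in B_{(1-\lambda)r/2}(c)$ as $y = (1-\lambda)a' + \lambda b'$ with $a' := (y - \lambda b')/(1-\lambda)$; a triangle-inequality estimate then yields $\norm{a' - a} < r$, whence $a' \in S$ and $y \in S$ by convexity. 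Granting this, $\interior(\cl(S)) = S$ follows: any $y \in \interior(\cl(S))$ distinct from some fixed $a \in S$ can be written as a convex combination of $a$ and a slightly extended $b \in \cl(S)$ along the ray from $a$ through $y$, placing $y \in S$. Hence $x \in \bd(\cl(S))$, and Corollary~\ref{cor:support} supplies a hyperplane supporting $\cl(S)$ at $x$, which is the desired separator of $S$ and $\{x\}$.

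The main obstacle is the open subcase with $x \in \cl(S)$: without a nearest-point projection onto $S$ itself, one is forced to pass to the closure and certify that $S$ coincides with $\interior(\cl(S))$. The line segment principle that makes this work is classical in convex analysis, and its proof uses only openness, convexity, and the triangle inequality --- ingredients that all remain available in the definably complete setting with no further hypotheses on $\mathfrak R$.
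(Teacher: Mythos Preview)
Your proof is correct and follows essentially the same route as the paper's: the closed case via the projection and Lemma~\ref{lem:supporting hyperplane}, the open case by passing to $\cl(S)$ and invoking Corollary~\ref{cor:support} once one knows $x\in\bd(\cl(S))$. The paper isolates the fact $\interior(\cl(S))=\interior(S)$ (for convex $S$ with nonempty interior) as a separate lemma, citing \cite[Lemma~1.1.8]{Schneider} for the line-segment principle, whereas you spell out that principle directly; and in the closed case the paper uses the parallel hyperplane through $(p+x)/2$ rather than $H_{u,p}$ itself, but under the paper's definition of ``separate'' either choice works.
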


To see this we use:

\begin{lemma}
Let $A\subseteq R^n$ be convex with non-empty interior. Then $\interior(\cl(A))=\interior(A)$ and hence $\bd(A)=\bd(\cl(A))$.
\end{lemma}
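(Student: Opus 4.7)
The inclusion $\interior(A)\subseteq \interior(\cl(A))$ is immediate from $A\subseteq \cl(A)$, so the task is to prove the reverse inclusion. The plan is to reduce it to the following \emph{accessibility} claim, standard in classical convex geometry: if $a\in\interior(A)$, $y\in\cl(A)$ and $\mu\in(0,1]$, then $z:=\mu a+(1-\mu)y\in\interior(A)$. Granting this, I would take an arbitrary $x\in\interior(\cl(A))$ and, using the hypothesis that $\interior(A)\neq\emptyset$, fix some $a\in\interior(A)$; one may assume $x\neq a$. Since $x\in\interior(\cl(A))$, a short ``push'' $y:=x+t(x-a)$ with $t>0$ small enough (so $\|y-x\|=t\|x-a\|$ is less than a witness radius for $x\in\interior(\cl(A))$) still lies in $\cl(A)$. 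Rewriting gives
\[
x=\tfrac{t}{1+t}\,a+\tfrac{1}{1+t}\,y,
\]
a convex combination of the required form with $\mu=t/(1+t)\in(0,1)$, and the accessibility claim then yields $x\in\interior(A)$.

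To prove the accessibility claim, assume $\mu<1$ (the case $\mu=1$ is trivial) and fix $\varepsilon>0$ with $B_\varepsilon(a)\subseteq A$. I would show $B_{\mu\varepsilon/2}(z)\subseteq A$. Given $w$ with $\|w-z\|<\mu\varepsilon/2$, use $y\in\cl(A)$ to pick $y''\in A$ with $\|y-y''\|<\frac{\mu\varepsilon}{2(1-\mu)}$, and set $a'':=\mu^{-1}\bigl(w-(1-\mu)y''\bigr)$, so that $w=\mu a''+(1-\mu)y''$ by construction. A direct estimate using $w-\mu a=(w-z)+(1-\mu)y$ gives
\[
\|a''-a\|\ \leq\ \mu^{-1}\|w-z\|+\mu^{-1}(1-\mu)\|y-y''\|\ <\ \tfrac{\varepsilon}{2}+\tfrac{\varepsilon}{2}=\varepsilon,
\]
so $a''\in A$, and hence $w\in A$ by convexity. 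Thus $z\in\interior(A)$.

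For the second assertion, once $\interior(\cl(A))=\interior(A)$ is established, the equality $\cl(\cl(A))=\cl(A)$ gives
\[
\bd(A)=\cl(A)\setminus\interior(A)=\cl(\cl(A))\setminus\interior(\cl(A))=\bd(\cl(A)).
\]
The argument uses nothing beyond the definition of closure in the order topology and elementary convexity; no appeal to definable completeness is required (points of $\cl(A)$ have arbitrarily close neighbours in $A$ simply by definition of closure). The only delicate point is the $\varepsilon$-bookkeeping in the accessibility claim, which is straightforward once one isolates the two summands above.
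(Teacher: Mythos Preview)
Your proof is correct and follows essentially the same route as the paper's: both take a point of $\interior(\cl(A))$, push past it along the line from a fixed interior point to land in $\cl(A)$, and then invoke the accessibility lemma (that $[\text{interior point},\text{closure point})\subseteq\interior(A)$). The only difference is that the paper cites this lemma from Schneider's book, whereas you prove it directly; your $\varepsilon$-bookkeeping is fine.
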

\begin{proof}
The inclusion $\interior(A)\subseteq\interior(\cl(A))$ is trivial. Conversely, let $z\in\interior(\cl(A))$. Take an arbitrary $x\in\interior(A)$. Then there exists $y\in\cl(A)$ such that $z\in [x,y)$.
As in the case $R=\R$ (cf.~\cite[Lemma~1.1.8]{Schneider}) one shows that this implies $z\in\interior(A)$.
\end{proof}

\begin{proof}[Proof \textup{(}Proposition~\ref{prop:separation}\textup{)}]
Suppose first that $S$ is closed, and set $p=p(x,S)$, $u=x-p$. Then the hyperplane which is parallel to the supporting hyperplane $H_{u,p}$ of $S$ at $p$ and passes through $(p+x)/2$ separates $S$ and $\{x\}$. If $S$ is not closed and $x\notin\cl(S)$, then every hyperplane separating $\cl(S)$ and $\{x\}$ also separates $S$ and $\{x\}$. If $S$ is open and $x\in\cl(S)$, then $x\in\bd(\cl(S))$, so by Corollary~\ref{cor:support} there is a supporting hyperplane $H$ to $\cl(S)$ through $x$, and $H$ separates $S$ and $\{x\}$.
\end{proof}

\begin{remark}
Proposition~\ref{prop:separation} fails if the requirement that $S$ be definable is dropped.
(See \cite{Robson} for what can be salvaged in this case by employing the real spectrum.)
\end{remark}

We obtain a definable version of a special case of the separation theorem for convex sets \cite[Theorem~2.4.10]{Webster}:

\begin{corollary}\label{cor:separation}
Let $A,B\subseteq R^n$ be definable non-empty convex sets with $A\cap B=\emptyset$. If $A$ is open, or if $A$ is closed and $B$ is closed and bounded, then $A$ and $B$ can be separated.
\end{corollary}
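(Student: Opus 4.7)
The plan is to reduce to the point-versus-convex-set separation result of Proposition~\ref{prop:separation} by translating into a statement about the Minkowski difference. Set $C := A + (-B) = \{a-b : a\in A,\ b\in B\}$. This set is non-empty because $A$ and $B$ are, convex as a sum of two convex sets, and definable since $A$ and $B$ are. The disjointness hypothesis $A\cap B=\emptyset$ is equivalent to $0\notin C$, so Proposition~\ref{prop:separation} will apply to $C$ and the point $0$ once we check that $C$ is either open or closed in the appropriate case.

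First I would verify the topological hypothesis on $C$. If $A$ is open, then $C = \bigcup_{b\in B}(A-b)$ is a union of translates of the open set $A$, hence open. If $A$ is closed and $B$ is closed and bounded, then $-B$ is closed and bounded as well, so Lemma~\ref{lem:A+B closed} gives that $C = A+(-B)$ is closed. Thus in either case Proposition~\ref{prop:separation}, applied to the non-empty definable convex set $C$ and the point $0\notin C$, produces a hyperplane $H=H_{u,\gamma}$ that separates $C$ from $\{0\}$.

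Finally I would convert this into separation of $A$ and $B$. After possibly replacing $u$ by $-u$, we may assume $C\subseteq H^-$ and $\{0\}\subseteq H^+$, i.e., $\langle a-b,u\rangle\leq\gamma$ for all $a\in A$, $b\in B$, and $0\geq\gamma$. Equivalently, $\langle a,u\rangle\leq\langle b,u\rangle+\gamma\leq\langle b,u\rangle$ for all $a\in A$, $b\in B$. Setting
\[
\beta := \sup_{a\in A}\langle a,u\rangle \in R_{\pm\infty},
\]
the definable set $\{\langle a,u\rangle : a\in A\}\subseteq R$ is bounded above by any fixed $\langle b_0,u\rangle$ (where $b_0\in B$), so definable completeness gives $\beta\in R$ and $\beta\leq\inf_{b\in B}\langle b,u\rangle$. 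Then $H_{u,\beta}$ is the desired separating hyperplane for $A$ and $B$.

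There is no real obstacle here: the argument is essentially the classical one, and the only point where the definably complete setting intervenes nontrivially is the closedness of $A+(-B)$ in the second case, which is precisely the content of Lemma~\ref{lem:A+B closed}, together with the use of definable completeness to ensure $\beta$ actually lies in $R$.
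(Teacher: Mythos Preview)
Your proof is correct and follows essentially the same approach as the paper: form the Minkowski difference $A-B$, observe that it is open (respectively closed, via Lemma~\ref{lem:A+B closed}) and misses the origin, apply Proposition~\ref{prop:separation}, and translate back. The paper dismisses the last step as ``easy to see''; your explicit use of definable completeness to produce the separating level $\beta=\sup_{a\in A}\langle a,u\rangle$ is a clean way to fill in that detail.
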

\begin{proof}
The convex set $S:=A-B$ does not contain the origin $0$ of $R^n$. If $A$ is open, then so is $S$, and if $A$ is closed and $B$ is closed and bounded, then $S$ is closed (Lemma~\ref{lem:A+B closed}). Hence $S$ and $\{0\}$ can be separated by Proposition~\ref{prop:separation}. It is easy to see that this yields that $A$ and $B$ can be separated.
\end{proof}

\section{Proof of Theorem~\ref{thm:Helly}}\label{sec:Helly proof}

\noindent
Suppose that $\mathfrak R$ is a definably complete expansion of an ordered field, and let $\mathcal C=\{C_a\}_{a\in A}$ be a definable family of closed bounded convex subsets of $R^n$, with $A\neq\emptyset$. Assume $\mathcal C$ has the $(n+1)$-intersection property; we need to show $\bigcap\mathcal C\neq\emptyset$. Fix an arbitrary $a_0\in A$. By Helly's Theorem for finite families (Corollary~\ref{cor:Helly finite}), the definable family $\mathcal C'=\{C_a\cap C_{a_0}\}_{a\in A}$ of closed bounded convex subsets of $R^n$ also has the $(n+1)$-intersection property. Hence, after replacing $\mathcal C$ by $\mathcal C'$ if necessary, we may assume that $\bigcup_{a\in A} C_a$ is bounded. In particular, for each $x\in R^n$, the set of distances $d(x,C_a)$ (where $a$ ranges over $A$) is bounded from above, and we obtain a definable function $d\colon R^n\to R$ given by
$$d(x) := \sup_{a\in A} d(x,C_a).$$
The function $d$ is convex and non-expansive. (Lemmas~\ref{lem:inf and sup of Lipschitz functions} and \ref{lem:sup of convex functions}, and Corollary~\ref{cor:distance is non-expansive}.)
In particular, for $\varrho>0$ such that $B_{\varrho/2}(0)\supseteq \bigcup_{a\in A} C_a$, the 
restriction of $d$ to $\ol{B}_\varrho(0)$ has a minimum. (Corollary~\ref{cor:minmax}.) This minimum must be attained in $B_\varrho(0)$, and is indeed a global minimum of $d$. 
Let $x_0\in R^n$ such that $d(x_0)=\min_{x\in R^n} d(x)$. If $d(x_0)=0$ then $x_0\in \bigcap_{a\in A} C_a$, and we are done. So assume $d(x_0)>0$. We obtain a definable map 
$$a\mapsto x_a:=p(x_0,C_a)\colon A\to \bigcup_a C_a.$$
We have $||x_0-x_a||=d(x_0,C_a)$ for each $a\in A$. 
Let $\varepsilon>0$ be given. The definable set 
$$A_\varepsilon := \big\{ a\in A: d(x_0)-\varepsilon \leq d(x_0,C_a)\big\}$$
is non-empty. We let $H$ be the image of $A_\varepsilon$ under $a\mapsto x_a$, and put $C:=\cl(\conv(H))$. (There is no reason to believe that $\conv(H)$ is closed, unless, for example, $A_\varepsilon$ is finite.) 

\begin{claim} $x_0\in C$.
\end{claim}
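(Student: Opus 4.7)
My plan is to argue by contradiction: assuming $x_0 \notin C$, I will exhibit a point $z$ near $x_0$ with $d(z) < d(x_0)$, which contradicts the choice of $x_0$ as a minimizer of $d$.

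The starting observation is that $C = \cl(\conv(H))$ is a non-empty, closed, convex, bounded, definable subset of $R^n$: closedness and convexity are immediate from the definition, and boundedness follows from $H \subseteq \bigcup_{a\in A} C_a$ together with Corollary~\ref{cor:conv of cbd is cbd}. So Corollary~\ref{cor:nearest point} supplies the nearest point $p := p(x_0, C)$, and $p \neq x_0$ under the contradiction hypothesis. My plan is then to take $z$ on the half-open segment $(x_0, p]$, close enough to $x_0$ that $\norm{z - x_0} < \varepsilon$, and show the required strict decrease of $d$.

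For the decrease estimate I would split into the cases $a \in A_\varepsilon$ and $a \notin A_\varepsilon$. In the first case, Lemma~\ref{lem:lem1} applied to $C$ yields a uniform $\delta > 0$ with $\norm{z - c} \leq \norm{x_0 - c} - \delta$ for every $c \in C$; since $x_a \in H \subseteq C$, this gives
$$d(z, C_a) \leq \norm{z - x_a} \leq \norm{x_0 - x_a} - \delta = d(x_0, C_a) - \delta \leq d(x_0) - \delta.$$
In the second case, non-expansiveness of $y \mapsto d(y, C_a)$ (Corollary~\ref{cor:distance is non-expansive}) combined with the defining inequality $d(x_0, C_a) < d(x_0) - \varepsilon$ for $a \notin A_\varepsilon$ gives
$$d(z, C_a) \leq d(x_0, C_a) + \norm{z - x_0} < d(x_0) - \bigl(\varepsilon - \norm{z - x_0}\bigr).$$
Taking the supremum over $a \in A$ yields $d(z) \leq \max\bigl(d(x_0) - \delta,\ d(x_0) - (\varepsilon - \norm{z-x_0})\bigr) < d(x_0)$, the desired contradiction.

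The main obstacle---and the reason for enlarging $H$ to $C = \cl(\conv(H))$ in the first place---is securing the uniform gap $\delta$ across the potentially infinite family of projections $\{x_a : a \in A_\varepsilon\}$. Lemma~\ref{lem:lem1} provides such a gap for all points of $C$ simultaneously, but only under the hypothesis that the target set is closed, convex, and bounded. That $C$ remains bounded after taking the convex hull and then the closure is not automatic; it rests on definable completeness via Corollary~\ref{cor:conv of cbd is cbd}.
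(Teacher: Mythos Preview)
Your proof is correct and follows essentially the same route as the paper's: assume $x_0\notin C$, move from $x_0$ toward $p=p(x_0,C)$ along the segment to a point $z$, and use Lemma~\ref{lem:lem1} (applied to $C$) for the indices in $A_\varepsilon$ together with non-expansiveness of $d(\,\cdot\,,C_a)$ for the remaining indices to force $d(z)<d(x_0)$. The only cosmetic difference is that the paper fixes $\norm{z-x_0}=\varepsilon/2$ while you allow any $z\in(x_0,p]$ with $\norm{z-x_0}<\varepsilon$; your version is in fact slightly cleaner, since it avoids the tacit case distinction needed when $\norm{x_0-p}<\varepsilon/2$.

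One small inaccuracy in your commentary (not in the argument itself): boundedness of $C=\cl(\conv(H))$ does \emph{not} rely on definable completeness or on Corollary~\ref{cor:conv of cbd is cbd}. The convex hull of a bounded set is bounded for elementary reasons (the paper notes this just before that corollary), and the closure of a bounded set is trivially bounded. What genuinely requires the definably complete setting here is Lemma~\ref{lem:lem1} itself, whose proof invokes Lemma~\ref{lem:monotone}.
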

\begin{proof}
Suppose for a contradiction that $x_0\notin C$.
Let $p=p(x_0,C)$, and let $z\in [x_0,p]$ such that $||x_0-z||=\varepsilon/2$. We show that $d(z)<d(x_0)$; this will contradict the minimality of $d(x_0)$. If $a\notin A_\varepsilon$, then $d(x_0)-\varepsilon > d(x_0,C_a)$, and since $d(\,\cdot\, , C_a)$ is non-expansive, we have
$$d(z,C_a) - d(x_0,C_a) \leq ||z-x_0||=\varepsilon/2$$
and hence
$$d(z,C_a) \leq \varepsilon/2 + (d(x_0)-\varepsilon) = d(x_0)-\varepsilon/2.$$
Let $\delta>0$ be as in Lemma~\ref{lem:lem1} applied to $x=x_0$. Then for all $a\in A_{\varepsilon}$ we have
$$d(z,C_a)\leq ||z-x_a|| \leq ||x_0-x_a||-\delta=d(x_0,C_a)-\delta \leq d(x_0)-\delta.$$
Hence $d(z)\leq d(x_0)-\min(\varepsilon/2,\delta)$.
\end{proof}

By the claim and Carath\'eodory's theorem there are elements $a_1,\dots,a_{n+1}\in A_\varepsilon$ and non-negative $\lambda_1,\dots,\lambda_{n+1}\in R$ with $\sum_i \lambda_i=1$ and
$||x_0 - \sum_i \lambda_i x_{a_i}||<\varepsilon^2$.
By Lemma~\ref{lem:supporting hyperplane}, for $y\in C_{a_i}$ we have
$$\langle y-x_0, x_{a_i} - x_0 \rangle\geq ||x_{a_i}-x_0||^2, $$
and since $$||x_{a_i}-x_0||=d(x_0,C_{a_i})\geq d(x_0)-\varepsilon,$$
we obtain
\begin{equation}\label{eq:lower bound}
\langle y-x_0, x_{a_i} - x_0 \rangle\geq (d(x_0)-\varepsilon)^2.
\end{equation}
Take $y$ with $y\in C_{a_i}$ for all $i$. (Such $y$ exists by the assumption of the theorem.) Then using \eqref{eq:lower bound} and the Cauchy-Schwarz Inequality we get
\begin{align*}
(d(x_0)-\varepsilon)^2 &\leq \sum_i \lambda_i \langle y-x_0, x_{a_i} - x_0 \rangle \\
&= \left\langle y-x_0, \sum_i \lambda_i x_{a_i}-x_0\right\rangle \\
&\leq  ||y-x_0||\cdot \varepsilon^2 \leq r\cdot \varepsilon^2,
\end{align*}
where $r>0$ is such that $\conv(\bigcup_{a\in A} C_a)\subseteq \ol{B}_r(x_0)$. Hence $\left(\frac{d(x_0)}{\varepsilon}-1\right)^2\leq r$,
and this is a contradiction for sufficiently small $\varepsilon>0$. \qed

\begin{remark}
The proof of Theorem~\ref{thm:Helly} given above exploits a certain duality between the intersection properties of convex sets and the representation of elements in the convex hull. After a first version of this manuscript was completed, we became aware of Sandgren's proof of Helly's Theorem \cite{Sandgren} (in the exposition of Valentine \cite{Valentine}) in which this duality is made more explicit. This proof may probably be adapted to give another proof of Theorem~\ref{thm:Helly} above. 
\end{remark}

\subsection{Applications}\label{sec:Helly applications}
In this subsection we give some applications of Theorem~\ref{thm:Helly}. Throughout we assume that $\mathfrak R$ is a definably complete expansion of an ordered field. By a {\bf translate} of  $A\subseteq R^n$ we mean a set of the form $x+A$, for some $x\in R^n$.
The following generalizes Theorem~\ref{thm:Helly} (which corresponds to the case  where $K$ is a singleton):

\begin{corollary}
Let $\mathcal C=\{C_a\}_{a\in A}$ be a definable family of closed bounded convex subsets of $R^n$, and let $K\subseteq R^n$ be definable, closed, bounded and convex. If any $n+1$ elements of $\mathcal C$ intersect some translate of $K$ non-trivially, then there is a translate of $K$ intersecting every element of $\mathcal C$ non-trivially.
\end{corollary}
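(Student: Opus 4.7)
The plan is to reduce the statement to Theorem~\ref{thm:Helly} by the standard trick of encoding the condition ``some translate of $K$ meets $C_a$'' as membership in a Minkowski-type set.

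First I would observe the basic equivalence: for $x \in R^n$, we have $(x+K) \cap C_a \neq \emptyset$ if and only if $x \in C_a + (-K)$, where $-K := \{-k : k \in K\}$. Thus, finding a translate of $K$ that meets every member of $\mathcal{C}$ amounts to finding a point in $\bigcap_{a \in A}(C_a + (-K))$.

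Next I would check that the family $\mathcal{D} = \{D_a\}_{a \in A}$ with $D_a := C_a + (-K)$ satisfies the hypotheses of Theorem~\ref{thm:Helly}. The family is manifestly definable. Each $D_a$ is convex as the Minkowski sum of two convex sets. Boundedness is immediate: choosing $\varrho_1, \varrho_2 > 0$ with $C_a \subseteq B_{\varrho_1}(0)$ (uniformly in $a$, since $\bigcup_a C_a$ is bounded as a definable image of a bounded set---or one can reduce to this as in the proof of Theorem~\ref{thm:Helly}) and $-K \subseteq B_{\varrho_2}(0)$, we get $D_a \subseteq B_{\varrho_1+\varrho_2}(0)$. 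Closedness is exactly where Lemma~\ref{lem:A+B closed} applies: $C_a$ is closed and $-K$ is closed and bounded, so $D_a = C_a + (-K)$ is closed. Finally, the $(n+1)$-intersection property for $\mathcal{D}$ is just a restatement of the hypothesis: given $a_1, \dots, a_{n+1} \in A$, the assumption furnishes $x \in R^n$ with $(x + K) \cap C_{a_i} \neq \emptyset$ for $i = 1, \dots, n+1$, i.e., $x \in D_{a_1} \cap \cdots \cap D_{a_{n+1}}$.

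Applying Theorem~\ref{thm:Helly} to $\mathcal{D}$ yields $x_0 \in \bigcap_{a \in A} D_a$, and then $x_0 + K$ is the desired translate of $K$ meeting every $C_a$.

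I do not expect any real obstacle here; the only point requiring a little care is verifying closedness of $D_a$, but Lemma~\ref{lem:A+B closed} is precisely tailored for this. If $\bigcup_a C_a$ is not assumed bounded a priori, one can perform the same trimming step as at the start of the proof of Theorem~\ref{thm:Helly} (intersect each $C_a$ with a fixed $C_{a_0}$) before forming the $D_a$.
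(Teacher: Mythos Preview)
Your proof is correct and follows essentially the same approach as the paper: the paper's one-line proof applies Theorem~\ref{thm:Helly} to the family $\{K-C_a\}_{a\in A}$ (equivalently, your $\{C_a-K\}_{a\in A}$ up to a sign), invoking Lemma~\ref{lem:A+B closed} for closedness. Your remark about uniform boundedness is unnecessary, since Theorem~\ref{thm:Helly} only requires each $D_a$ to be individually closed, bounded, and convex.
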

\begin{proof}
Recall  Lem\-ma~\ref{lem:A+B closed} and
apply Theorem~\ref{thm:Helly} to the family $\{K-C_a\}_{a\in A}$.
\end{proof}

Combining Helly's Theorem for finite families (Corollary~\ref{cor:Helly finite}) with Theorem~\ref{thm:Helly} yields another slight variant:

\begin{corollary}\label{cor:Helly only one bounded}
Suppose $\mathcal C$ is a definable family of closed convex subsets of $R^n$, each $n+1$ of which intersect non-trivially, and assume some intersection $C$ of finitely many members of $\mathcal C$ is bounded. Then $\bigcap\mathcal C\neq\emptyset$.
\end{corollary}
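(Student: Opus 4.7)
The plan is to reduce to Theorem~\ref{thm:Helly} by intersecting every member of $\mathcal C$ with the bounded set $C$. Write $C = C_{a_1}\cap\cdots\cap C_{a_k}$ where $a_1,\dots,a_k\in A$, and note that $C$ is a closed bounded convex definable set. Replace $\mathcal C$ by the definable family
$$\mathcal C' := \{C_a\cap C\}_{a\in A},$$
which consists of closed bounded convex subsets of $R^n$.

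The key step is to verify that $\mathcal C'$ still has the $(n+1)$-intersection property. Given $b_1,\dots,b_{n+1}\in A$, we must show
$$C_{b_1}\cap\cdots\cap C_{b_{n+1}}\cap C_{a_1}\cap\cdots\cap C_{a_k}\neq\emptyset.$$
This is the intersection of a finite subfamily of $\mathcal C$ (of size $n+1+k$), and by hypothesis any $n+1$ members of $\mathcal C$ meet; hence Helly's Theorem for finite families (Corollary~\ref{cor:Helly finite}) delivers the required common point.

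With $\mathcal C'$ a definable family of closed bounded convex sets enjoying the $(n+1)$-intersection property, Theorem~\ref{thm:Helly} gives $\bigcap \mathcal C'\neq\emptyset$. Finally, since $\bigcap_{a\in A} C_a \subseteq C$, we have
$$\bigcap\mathcal C' \ =\ \Bigl(\bigcap_{a\in A} C_a\Bigr)\cap C \ =\ \bigcap\mathcal C,$$
so $\bigcap\mathcal C\neq\emptyset$ as desired. There is no real obstacle here: the only point that requires attention is the appeal to Corollary~\ref{cor:Helly finite} to upgrade the assumed $(n+1)$-intersection property of $\mathcal C$ to the $(n+1+k)$-intersection property needed to guarantee that each $C_a\cap C$ is non-empty (and likewise for finite subfamilies of $\mathcal C'$).
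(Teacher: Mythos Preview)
Your proof is correct and follows exactly the approach the paper intends: the paper merely states that the corollary is obtained by ``combining Helly's Theorem for finite families (Corollary~\ref{cor:Helly finite}) with Theorem~\ref{thm:Helly},'' and you have spelled out precisely that combination---intersecting each $C_a$ with the bounded $C$, using Corollary~\ref{cor:Helly finite} to verify the $(n+1)$-intersection property of the resulting family, and then invoking Theorem~\ref{thm:Helly}.
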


By taking complements, Theorem~\ref{thm:Helly} about intersections of closed sets immediately gives rise to a result about coverings by open sets:

\begin{corollary}
Let $\mathcal F=\{F_a\}_{a\in A}$ be a definable family of open subsets of $R^n$ with the property that for every $a\in A$, the complement $R^n\setminus F_a$ is convex. Let $C$ be a closed bounded convex definable subset of $R^n$ with $C\subseteq\bigcup\mathcal F$. Then there are $n+1$ members $F_{a_1},\dots, F_{a_{n+1}}$ of $\mathcal F$ with $C\subseteq F_{a_1}\cup\cdots\cup F_{a_{n+1}}$.
\end{corollary}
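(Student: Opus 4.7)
The plan is to argue by contraposition, converting the covering statement into an intersection statement and applying Theorem~\ref{thm:Helly}. For each $a\in A$ set $K_a := R^n\setminus F_a$; by hypothesis each $K_a$ is closed and convex, and the family $\{K_a\}_{a\in A}$ is definable since $\mathcal F$ is. Then the family
\[
\mathcal C := \{C\cap K_a\}_{a\in A}
\]
is a definable family of closed, bounded (since $C$ is bounded), convex subsets of $R^n$, so it is a legitimate input for Theorem~\ref{thm:Helly}.

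Next I would translate the desired conclusion into the language of $\mathcal C$. A subcover $C\subseteq F_{a_1}\cup\cdots\cup F_{a_{n+1}}$ is the exact negation of $C\cap K_{a_1}\cap\cdots\cap K_{a_{n+1}}\neq\emptyset$, i.e.\ of $(C\cap K_{a_1})\cap\cdots\cap(C\cap K_{a_{n+1}})\neq\emptyset$. So the failure of the corollary's conclusion is precisely the statement that $\mathcal C$ has the $(n+1)$-intersection property.

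Assume for contradiction that no $n+1$ members of $\mathcal F$ cover $C$; then $\mathcal C$ has the $(n+1)$-intersection property. Theorem~\ref{thm:Helly} yields a point $x\in\bigcap_{a\in A}(C\cap K_a)$. But such an $x$ lies in $C$ and in every $K_a=R^n\setminus F_a$, contradicting $C\subseteq\bigcup\mathcal F$. Hence some $n+1$ members of $\mathcal F$ cover $C$, as desired.

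There is no serious obstacle here: the only small wrinkle is that one should convince oneself that the family $\mathcal C$ is genuinely definable (immediate, since both $C$ and the family $\{F_a\}$ are definable), and that the $(n+1)$-intersection property is exactly the negation of the existence of an $(n+1)$-element subcover — which is a direct De~Morgan computation. The whole argument is a routine complement/duality step, riding entirely on Theorem~\ref{thm:Helly}.
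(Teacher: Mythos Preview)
Your argument is correct and is exactly the complementation/duality step the paper has in mind; the paper itself does not spell out a proof, merely noting that the corollary follows from Theorem~\ref{thm:Helly} ``by taking complements.'' Your write-up simply makes this explicit.
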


The hypotheses on $\mathcal F$ are satisfied, e.g., by the family of open halfspaces in $R^n$. 

\begin{corollary}[Jung's Theorem]
Let $A$ be a definable subset of $R^n$ of diameter at most $1$ \textup{(}i.e., $||a-b||\leq 1$ for all $a,b\in A$\textup{)}. Then there is a closed ball of radius $\varrho=\sqrt{n/(2(n+1))}$ containing $A$.
\end{corollary}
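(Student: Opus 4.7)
The strategy is to apply Theorem~\ref{thm:Helly} to the definable family $\mathcal{C}:=\{\ol{B}_\varrho(a):a\in A\}$ of closed bounded convex sets. Any common point $c\in\bigcap\mathcal{C}$ satisfies $A\subseteq\ol{B}_\varrho(c)$, which is exactly what we want. The case $A=\emptyset$ being trivial, assume $A\neq\emptyset$; then Theorem~\ref{thm:Helly} reduces the problem to the finite instance: for any $a_1,\dots,a_{n+1}\in R^n$ with $\|a_i-a_j\|\leq 1$, there exists $c\in R^n$ with $\|c-a_i\|\leq \varrho$ for all $i$.

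For the finite case, I would consider the continuous function $f\colon R^n\to R$ defined by $f(c):=\max_i\|c-a_i\|$. Since $f(a_1)\leq 1$ while $f(c)\geq\|c-a_1\|$, the infimum of $f$ is attained on the closed bounded set $\ol{B}_1(a_1)$ by Corollary~\ref{cor:minmax}. Let $c_0$ be a minimizer, $r:=f(c_0)$, and $S:=\{a_i:\|c_0-a_i\|=r\}$ (non-empty by the definition of $r$); the goal is to show $r\leq\varrho$.

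The key intermediate step is to prove $c_0\in\conv(S)$. Otherwise, $C:=\conv(S)$ is a non-empty closed convex definable set (being the convex hull of a finite set) not containing $c_0$, so Lemma~\ref{lem:supporting hyperplane} yields a nonzero $u:=c_0-p(c_0,C)$ with $\langle c_0-a,u\rangle>0$ for every $a\in S$. Expanding $\|c_0-tu-a\|^2=\|c_0-a\|^2-2t\langle c_0-a,u\rangle+t^2\|u\|^2$ shows that $\|c_0-tu-a\|<r$ both for $a\in S$ (by the strict inequality above) and, by continuity, for the finitely many $a_i\notin S$ (where already $\|c_0-a_i\|<r$), provided $t>0$ is small enough; thus $f(c_0-tu)<r$, contradicting the minimality of $c_0$.

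Translating so that $c_0=0$, write $0=\sum_{a\in S}\lambda_a\,a$ with $\lambda_a\geq 0$ and $\sum_a\lambda_a=1$. Using $\|a\|=r$ for $a\in S$ together with $\langle a,b\rangle\geq r^2-\tfrac{1}{2}$ for distinct $a,b\in S$ (from $\|a-b\|^2\leq 1$), expanding $\|\sum_a\lambda_a a\|^2=0$ yields $0\geq r^2-\sigma$, where $\sigma$ denotes the sum of $\lambda_a\lambda_b$ over unordered pairs of distinct $a,b\in S$. Cauchy--Schwarz applied to $\sum_a\lambda_a=1$ gives $\sum_a\lambda_a^2\geq 1/|S|$, hence $2\sigma=1-\sum_a\lambda_a^2\leq 1-1/|S|$, so $r^2\leq (|S|-1)/(2|S|)\leq n/(2(n+1))=\varrho^2$ since $|S|\leq n+1$. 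The main obstacle is this finite-dimensional geometric argument---in particular the proof that the optimal center $c_0$ lies in the convex hull of the active constraints---which must be handled within the definably complete framework; the Helly reduction itself is routine.
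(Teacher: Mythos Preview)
Your proof is correct and follows the same strategy as the paper: reduce via Theorem~\ref{thm:Helly} applied to the family $\{\ol{B}_\varrho(a)\}_{a\in A}$ to the case of at most $n+1$ points, then handle that finite case. The paper simply cites \cite[Theorem~7.1.6]{Webster} for the finite case, whereas you supply a complete argument (smallest enclosing ball, $c_0\in\conv(S)$ via Lemma~\ref{lem:supporting hyperplane}, then the convex-combination estimate); all steps you use---Corollary~\ref{cor:minmax} for existence of the minimizer, Corollary~\ref{cor:conv of cbd is cbd} to ensure $\conv(S)$ is closed, and the purely algebraic inequalities---are available in the definably complete setting, so your self-contained treatment is a genuine bonus over the paper's citation.
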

\begin{proof}
If $A$ has at most $n+1$ elements, this may be shown as for $R=\mathbb R$, cf.~\cite[Theorem~7.1.6]{Webster}. Hence for arbitrary $A$, by Theorem~\ref{thm:Helly} there is some $x\in \bigcap_{a\in A} \ol{B}_\varrho(a)$, and then $A\subseteq \ol{B}_\varrho(x)$.
\end{proof}

Given a subset $A$ of $R^n$, a family $\{C_a\}_{a\in A}$ of subsets of $A$ is called a {\bf Knaster-Kuratowski-Mazurkiewicz family} (KKM family for short) if for every finite subset $F$ of $A$,
$$\conv(F) \subseteq \bigcup_{a\in F} C_a.$$
The KKM Theorem (see \cite{GD,GL}) states that if $A$ is a non-empty compact convex subset of $\R^n$, then every KKM family consisting of closed subsets of $A$ has a non-empty intersection. From Theorem~\ref{thm:Helly} we obtain:

\begin{corollary}[KKM Theorem for definable families of convex sets]
Let $A$ be a closed and bounded non-empty subset of $R^n$ and let $\mathcal C=\{C_a\}_{a\in A}$ be a KKM family where each $C_a$ is closed and convex. Then $\bigcap\mathcal C\neq\emptyset$.
\end{corollary}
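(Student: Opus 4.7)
My plan is to apply Theorem~\ref{thm:Helly} to the definable family $\mathcal C$. Each $C_a$ is closed and convex by hypothesis and is bounded as a subset of the bounded set $A$; thus every member of $\mathcal C$ is a closed, bounded, convex subset of $R^n$. By Theorem~\ref{thm:Helly} it therefore suffices to verify the $(n+1)$-intersection property for $\mathcal C$.

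To that end I would fix arbitrary $a_1,\dots,a_{n+1}\in A$. The KKM hypothesis applied to the singletons $F=\{a_i\}$ gives $a_i\in C_{a_i}$ for each $i$, and applied to $F=\{a_1,\dots,a_{n+1}\}$ gives $\sigma:=\conv\{a_1,\dots,a_{n+1}\}\subseteq\bigcup_{i=1}^{n+1}C_{a_i}$. This is precisely the setup of the classical finite Knaster--Kuratowski--Mazurkiewicz lemma for closed convex sets: a simplex covered by $n+1$ closed convex sets, each containing the corresponding vertex. Its conclusion $\bigcap_{i=1}^{n+1}C_{a_i}\neq\emptyset$ is exactly what I need.

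The standard proof of this finite KKM lemma combines Sperner's lemma (a purely combinatorial statement, available in any setting that supports barycentric coordinates) with a compactness argument to pass to the limit as the mesh shrinks to $0$. In our definably complete framework the compactness step is supplied by the Definable Bolzano--Weierstrass Theorem (Proposition~\ref{prop:BW}). Concretely, for a definable unbounded mesh parameter $\varepsilon\in R^{>0}$ one takes a barycentric subdivision of $\sigma$ of diameter at most $\varepsilon$, labels each vertex $v$ by some index $i(v)$ with $v\in C_{a_{i(v)}}$ (the KKM property ensures the labeling obeys the Sperner boundary condition on each face), and applies Sperner's lemma to obtain a ``rainbow'' sub-simplex with one vertex in each $C_{a_i}$ and of diameter at most $\varepsilon$. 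Lemma~\ref{lem:defchoice} then provides definable sequence-maps $\gamma_i\colon I\to C_{a_i}$ with $\norm{\gamma_i(\varepsilon)-\gamma_j(\varepsilon)}\leq\varepsilon$ for all $i,j$. Applying Proposition~\ref{prop:BW} to $\gamma_1$ on the closed bounded set $\sigma$ yields an accumulation point $y\in\sigma$, and the uniform bound together with closedness of each $C_{a_i}$ forces $y\in\bigcap_{i=1}^{n+1}C_{a_i}$.

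The principal obstacle is this definable implementation of the Sperner-plus-limit step: one must arrange the labeling, the selection of the rainbow simplex, and the extraction of its vertices so that everything varies definably in the mesh parameter, enabling Proposition~\ref{prop:BW} to apply. Once the $(n+1)$-intersection property has been verified in this way, Theorem~\ref{thm:Helly} completes the proof of the corollary.
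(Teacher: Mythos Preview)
Your overall strategy—verify the $(n+1)$-intersection property and then invoke Theorem~\ref{thm:Helly}—is exactly the paper's. The gap lies in your proposed method for the finite step. To feed Proposition~\ref{prop:BW} you need a \emph{definable} sequence-map $t\mapsto(\gamma_1(t),\dots,\gamma_{n+1}(t))$, indexed by an unbounded definable $I\subseteq R^{>0}$, with the $\gamma_i(t)$ the vertices of a rainbow simplex of diameter tending to~$0$. Producing such a family via Sperner requires triangulations of $\sigma$ of arbitrarily small mesh that vary definably with the parameter. Iterated barycentric subdivision does not achieve this: each step shrinks the mesh only by the factor $n/(n+1)$, and in a non-archimedean $R$ no finite iterate has infinitesimal mesh. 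A grid triangulation of mesh $\varepsilon$ would need roughly $\varepsilon^{-n}$ sub-simplices with vertices at integer multiples of $\varepsilon$; but in an o-minimal $\mathfrak R$ the integers are not definable, so there is no uniform first-order description of such a triangulation, let alone an internal Sperner lemma for it. You correctly flag this as ``the principal obstacle,'' but it is not merely bookkeeping—it reflects the absence of any definable discretization of $R$ at arbitrary scales, and the Sperner route appears genuinely blocked in this generality.

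The paper avoids the problem by citing the argument of Granas and Lassonde \cite[Th\'eor\`eme~1]{GL}, which establishes the finite intersection property for KKM families of closed \emph{convex} sets by purely convex-analytic means (distances to convex sets, projections, convex combinations), with no combinatorics. That argument uses only tools available in any definably complete expansion of an ordered field and transfers directly. The convexity hypothesis on the $C_a$ is precisely what lets one bypass Sperner; your proposal does not exploit it.
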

\begin{proof}
The argument in the proof of \cite[Th\'eor\`eme~1]{GL} for the case $R=\R$ shows that $\mathcal C$ has the finite intersection property. Hence $\bigcap\mathcal C\neq\emptyset$ by Theorem~\ref{thm:Helly}.
\end{proof}

The KKM Theorem for convex sets has numerous consequences (minimax theorems etc.), whose proofs go through for definable objects; cf.~\cite{GD, GL}.
Other applications of Helly's Theorem, some of which may also be transferred into the present context, can be found in \cite{DGK, Eckhoff}. 
Our last application of Theorem~\ref{thm:Helly} is used in the proof of Theorem~\ref{thm:Kirszbraun} in the next sections:
\begin{corollary}\label{cor:extend non-expansive}
Let $f\colon A\rightarrow R^n$, $A\subseteq R^n$, be a definable non-expansive map, and let $x\in R^n\setminus A$.
Then $f$ extends to a non-expansive map $A\cup\{x\}\to R^n$. 
\end{corollary}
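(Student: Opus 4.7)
The plan is to reduce the existence of a non-expansive extension of $f$ at $x$ to the nonemptiness of a definable family of closed balls, and then apply Theorem~\ref{thm:Helly}. Setting $F(x):=y$ yields a non-expansive extension $F\colon A\cup\{x\}\to R^n$ precisely when $\|y-f(a)\|\le\|x-a\|$ for every $a\in A$, i.e.\ when $y$ lies in $\bigcap_{a\in A}\ol{B}_{\|x-a\|}(f(a))$. This is a definable family of closed, bounded, convex subsets of $R^n$ (note $\|x-a\|>0$ since $x\notin A$), so Theorem~\ref{thm:Helly} reduces the corollary to its finite case: given any $a_1,\dots,a_{n+1}\in A$, find $y\in R^n$ with $\|y-f(a_i)\|\le\|x-a_i\|$ for $i=1,\dots,n+1$.

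To establish this finite case -- Kirszbraun's lemma for $n+1$ points -- I would argue variationally. Consider the definable continuous function
\[
\phi(y):=\max_{1\le i\le n+1}\frac{\|y-f(a_i)\|}{\|x-a_i\|}.
\]
Since $\phi(y)\to\infty$ as $\|y\|\to\infty$, restricting $\phi$ to a sufficiently large closed ball and invoking Corollary~\ref{cor:minmax} produces a global minimizer $y_0\in R^n$ with value $\mu:=\phi(y_0)$. The goal is to show $\mu\le 1$, for then $y:=y_0$ works.

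Assume, for contradiction, $\mu>1$, and let $I:=\{i:\|y_0-f(a_i)\|=\mu\|x-a_i\|\}$ be the active indices. The decisive step is to show $y_0\in\conv\{f(a_i):i\in I\}$: otherwise Corollary~\ref{cor:separation} separates $y_0$ from this finite (hence closed and bounded) convex set, yielding a unit vector $v$ such that for small $\varepsilon>0$ the translate $y_0-\varepsilon v$ strictly decreases every active ratio while perturbing the inactive ones only slightly, contradicting the minimality of $\mu$. Once this is in hand, Carath\'eodory's Theorem expresses $y_0=\sum_{i\in I'}\lambda_i f(a_i)$ with $I'\subseteq I$, $\lambda_i\ge 0$, and $\sum\lambda_i=1$. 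Expanding the identity $\sum_i\lambda_i(y_0-f(a_i))=0$ gives
\[
\sum_{i,j}\lambda_i\lambda_j\|f(a_i)-f(a_j)\|^2=2\sum_i\lambda_i\|y_0-f(a_i)\|^2=2\mu^2\sum_i\lambda_i\|x-a_i\|^2,
\]
while the parallel identity $\sum_{i,j}\lambda_i\lambda_j\|a_i-a_j\|^2=2\sum_i\lambda_i\|a_i-\bar a\|^2$ (where $\bar a=\sum_i\lambda_i a_i$) combined with the non-expansiveness of $f$ produces $\sum_{i,j}\lambda_i\lambda_j\|f(a_i)-f(a_j)\|^2\le 2\sum_i\lambda_i\|x-a_i\|^2$; comparing forces $\mu\le 1$, a contradiction.

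The main obstacle I anticipate is the convex-hull step $y_0\in\conv\{f(a_i):i\in I\}$: it requires the separation theorem (Corollary~\ref{cor:separation}) developed earlier, which in the definably complete setting ultimately relies on Lemma~\ref{lem:A+B closed} for closedness of Minkowski sums of a closed and a bounded closed definable set. The remaining algebra is independent of the ambient framework and transfers verbatim from the classical proof of the finite Kirszbraun lemma; the reduction to the finite case via Theorem~\ref{thm:Helly} is the easy half of the argument.
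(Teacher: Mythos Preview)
Your proposal is correct and follows the same overall architecture as the paper: reduce to the finite case via Theorem~\ref{thm:Helly}, then establish the finite Kirszbraun lemma (i.e., that the $n+1$ balls $\ol{B}_{\|x-a_i\|}(f(a_i))$ have a common point). The difference lies only in how the finite lemma is handled. The paper dispatches it in one line, either by citing the classical argument in \cite[Lemma~2.7]{Heinonen} or by observing that the statement is first-order in the language of ordered rings and invoking Tarski transfer from $\R$. You instead supply a self-contained variational proof: minimize the maximum ratio, locate the minimizer in the convex hull of the active centers via separation, and finish with the barycentric identity $\sum_{i,j}\lambda_i\lambda_j\|b_i-b_j\|^2=2\sum_i\lambda_i\|b_i-\bar b\|^2$. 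This is one of the standard direct proofs of the finite lemma and transfers cleanly to the definably complete setting (Corollary~\ref{cor:minmax} for existence of the minimizer, Proposition~\ref{prop:separation} for the separation step). Your route is longer but avoids external references and the appeal to Tarski; the paper's route is shorter but less self-contained. One small remark: in the final comparison you silently use that $\sum_i\lambda_i\|a_i-\bar a\|^2\le\sum_i\lambda_i\|a_i-x\|^2$, which is immediate from the fact that the barycenter minimizes the weighted sum of squared distances; it is worth making this explicit.
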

\begin{proof}
We have to show that  the set
\[
B:=\bigcap_{a\in A}\big\{ y\in R^n: \norm{y-f(a)}\leq \norm{x-a}\bigr\}
\]
is non-empty, because if $y\in B$, then we obtain an extension of $f$ to a non-expansive map $A\cup\{x\}\to R^n$ by $x\mapsto y$.

\begin{claim}
Let $x_1,\dots,x_{k}\in R^m$ and $y_1,\dots, y_{k}\in R^n$ for which the inequalities
\[
\norm{y_i-y_j}\leq \norm{x_i-x_j} \qquad (1\leq i,j\leq k)
\]
hold, and let $r_1,\dots,r_k\in R^{>0}$. If
$$\ol{B}_{r_1}(x_1)\cap\cdots\cap\ol{B}_{r_k}(x_k)\neq\emptyset,$$
then 
$$\ol{B}_{r_1}(y_1)\cap\cdots\cap\ol{B}_{r_k}(y_k)\neq\emptyset.$$
\end{claim}
(To see this, repeat the proof for the case $R=\R$ given in \cite[Lemma 2.7]{Heinonen}, or use the fact that the claim can be expressed as a sentence in the language of ordered rings, and apply Tarski's Transfer Principle and loc.~cit. A stronger version of the claim for $R=\R$ can be found in \cite{Gromov}.)
\medskip

Consider the definable family $\mathcal B=(B_a)_{a\in A}$ of closed balls in $R^n$ given by 
\[
B_{a}:=\ol{B}_{||x-a||}\big(f(a)\big)=\big\{ y\in R^n: \norm{y-f(a)}\leq \norm{x-a}\big\}.
\]
Let $a_1,\dots,a_{n+1}\in A$. Then $y_i:=f(a_i)$ and $x_i:=x-a_i$ satisfy the conditions of the claim, hence $B_{a_1}\cap \dots\cap B_{a_{n+1}}\neq\emptyset$. 
Thus by Theorem \ref{thm:Helly} we have $B=\bigcap\mathcal B\neq \emptyset$.
\end{proof}

\begin{remark}
In the context of the previous corollary, suppose that $f$ is firmly non-expansive. Then there exists an extension of $f$ to a firmly non-expansive map $A\cup\{x\}\to R^n$, by the corollary and Proposition~\ref{prop44}.
\end{remark}

\subsection{A related result}\label{sec:related results}

Let $S$ be a set and let $\mathcal F$ be a family of subsets of $S$.
A subset $T$ of $S$ is called a {\bf transversal}\/ of $\mathcal F$ if every member of $\mathcal F$ intersects $T$ non-trivially.
The following was shown by Peterzil and Pillay \cite{PP}, as an application of a result implicit in work of Dolich \cite{Dolich}:


\begin{theorem}\label{thm:Dolich}
Let $\mathfrak R$ be an o-minimal structure with definable choice function, and let $\mathcal F=\{F_a\}_{a\in A}$ be a definable family of closed and bounded subsets of $R^n$ parametrized by a subset $A$ of $R^m$. If $\mathcal F$ has the $N(m,n)$-intersection property where 
$$N(m,n) = (1+2^m)\cdot (1+2^{2^m}) \cdots \qquad\text{\textup{(}$n$ factors\textup{)}},$$
then $\mathcal F$ has a finite transversal.
\end{theorem}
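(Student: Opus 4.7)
The theorem will be proved by induction on the ambient dimension $n$, mirroring the recursive identity $N(m,n) = (1+2^m)\cdot N(2^m, n-1)$ satisfied by the intersection bound. The argument combines o-minimal cell decomposition (applied uniformly in the family parameter $a\in A$) with a Sauer--Shelah-style pigeonhole principle for definable sets, the latter being the combinatorial heart of Dolich's contribution invoked via \cite{Dolich,PP}.

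\textbf{Base case $n=1$.} By o-minimal cell decomposition applied uniformly in $a$, each $F_a\subseteq R$ is a finite union of points and closed intervals of uniformly bounded complexity; definable choice provides definable functions returning the components' endpoints. The $(1+2^m)$-intersection property then forces a collapse on these endpoint data: over an $m$-dimensional parameter space, a Sauer--Shelah-type argument shows that only finitely many endpoint configurations actually occur, and a finite transversal is extracted by picking one representative from each configuration.

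\textbf{Inductive step.} Assuming the result for ambient dimension $n-1$, let $\pi\colon R^n\to R$ be the projection to the first coordinate. The projected family $\{\pi(F_a)\}_{a\in A}$ consists of closed bounded subsets of $R$ inheriting the $(1+2^m)$-intersection property from $\mathcal F$, so by the base case it admits a finite transversal $T_0=\{t_1,\dots,t_k\}\subseteq R$. For each $t\in T_0$, form the fiber family $\mathcal F_t:=\{F_a\cap\pi^{-1}(t):a\in A,\ t\in\pi(F_a)\}$ and identify $\pi^{-1}(t)$ with $R^{n-1}$. Each $\mathcal F_t$ is a definable family of closed bounded subsets of $R^{n-1}$, parametrized (via a coding by definable choice) over a definable set of effective dimension at most $2^m$, and inheriting the $N(2^m,n-1)$-intersection property from $\mathcal F$. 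Applying the inductive hypothesis to each $\mathcal F_t$ yields a finite transversal $T_t\subseteq\pi^{-1}(t)$, and $\bigcup_{t\in T_0}T_t$ is the desired finite transversal of $\mathcal F$.

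\textbf{Main obstacle.} The crucial and most delicate point is the quantitative inheritance of the intersection property by the fiber families $\mathcal F_t$, in particular the precise blow-up from $m$ to $2^m$ in the parameter dimension. This is where Dolich's combinatorial result on dual shatter functions of definable families enters, and it is the step where o-minimality (rather than mere definable completeness) is essential: the Helly-type arguments of Section~\ref{sec:Helly proof} cannot supply such quantitative control. Without this precise dimension-doubling bound the induction would still close qualitatively, but not with the stated explicit constant $N(m,n)$.
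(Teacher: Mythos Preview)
The paper does not give a proof of this theorem at all: it is quoted verbatim as a result of Peterzil--Pillay \cite{PP}, obtained from a combinatorial fact implicit in Dolich \cite{Dolich}, and then simply \emph{applied} to give an alternative proof of Theorem~\ref{thm:Helly} in the o-minimal case. So there is no ``paper's own proof'' to compare your proposal against.

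That said, your proposal is not a proof either, and you essentially acknowledge this in your ``Main obstacle'' paragraph. The inductive architecture you describe (project to $R$, find a finite transversal there, pass to fibers) is plausible as a narrative, but the two load-bearing steps are precisely the ones you do not carry out. In the base case, ``a Sauer--Shelah-type argument shows that only finitely many endpoint configurations actually occur'' is an assertion, not an argument; nothing in o-minimal cell decomposition by itself bounds the number of configurations over an infinite parameter set $A$, and the $(1+2^m)$-intersection property has not been connected to any shatter-function bound. In the inductive step, the claim that each fiber family $\mathcal F_t$ is ``parametrized over a definable set of effective dimension at most $2^m$'' and ``inherits the $N(2^m,n-1)$-intersection property'' is exactly the content of the Dolich/Peterzil--Pillay result you are trying to prove; you then explicitly invoke ``Dolich's combinatorial result on dual shatter functions'' at this point, which makes the argument circular.

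In short: the paper treats Theorem~\ref{thm:Dolich} as a black box imported from \cite{Dolich,PP}, and your proposal, once the hand-waving is stripped away, does the same. If you want an actual proof you must supply the forking/VC-type combinatorics from those references; the projection-and-fiber scaffolding alone does not do the work.
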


This theorem gives rise to another proof of Theorem~\ref{thm:Helly}, kindly communicated to us by S.~Star\-chen\-ko,  in the case where $\mathfrak R$ is an o-minimal expansion of an ordered field.
Suppose $\mathfrak R$ is such an expansion, and let $\mathcal C=\{C_a\}_{a\in A}$ be a definable family of closed bounded convex subsets of $R^n$, with $A\neq\emptyset$, having the $(n+1)$-intersection property.
By Helly's theorem for finite families (Corollary~\ref{cor:Helly finite}), the (definable) family whose members are the intersections of $n+1$ members of $\mathcal C$ has the finite intersection property, and hence has a finite transversal by Theorem~\ref{thm:Dolich}. That is, there are $p_1,\dots,p_k\in R^n$ such that for all $a_1,\dots,a_{n+1}\in A$ we have $p_i\in C_{a_1}\cap\cdots\cap C_{a_{n+1}}$ for some $i$. Now Corollary~\ref{cor:Helly finite, 2} yields $\bigcap\mathcal C\neq\emptyset$.  \qed

\medskip

We finish with an example to show that the natural analogue of the Heine-Borel Theorem fails in the definable category:

\begin{example}
Suppose $\mathfrak R$ is a non-archimedean real closed field, and let $\varepsilon\in R$ be a positive infinitesimal. Then the definable family $\mathcal F=\{F_a\}_{a\in A}$ of closed and bounded subsets of $A=[0,1]$ given by $F_a=[0,1]\setminus (a-\varepsilon,a+\varepsilon)$ for $a\in A$ has the finite intersection property, but $\bigcap\mathcal F=\emptyset$. However, any two distinct elements of $A\cap\mathbb Q$ form a transversal of $\mathcal F$. (This is a simplification of an example in \cite{PP}.)
\end{example}

\section{Basic Convex Analysis}\label{sec:convex analysis}
\noindent
In this section we develop a few fundamental results from convex analysis required for the proof of Theorem~\ref{thm:Kirszbraun}. See \cite{BL, HU-L, Rockafellar} for this material in the classical case.

\subsection{Lower semicontinuous functions}
In this subsection we let $f\colon R^n\to R_{\pm\infty}$ be a function. One says that $f$ is lower semicontinuous (l.s.c.) if for each $x\in R^n$ and $\delta>0$, there exists $\varepsilon>0$ such that $f(y)\geq f(x)-\delta$ for all $y\in B_\varepsilon(x)$. 
A continuous function $R^n\to R$ is clearly l.s.c. Lower semicontinuity may be characterized geometrically:

\begin{lemma}
The following are equivalent:
\begin{enumerate}
\item $f$ is l.s.c.;
\item $\epi(f)$ is closed;
\item for every $r\in R$, the sublevel set
$f^{-1}(R^{\leq r})= \big\{ x\in R^n:f(x)\leq r\big\}$
of $f$ is closed.
\end{enumerate}
\end{lemma}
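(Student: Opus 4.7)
The plan is to prove the cyclic chain of implications (1) $\Rightarrow$ (2) $\Rightarrow$ (3) $\Rightarrow$ (1) via elementary topological arguments. None of the three implications require definable completeness or any choice principle; they follow directly from the definitions and the manipulation of open and closed sets in $R^n$ and $R^{n+1}$.

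First, for (1) $\Rightarrow$ (2), I would show that $R^{n+1}\setminus\epi(f)$ is open. Take $(x_0,t_0)\notin\epi(f)$, so that $t_0<f(x_0)$ in $R_{\pm\infty}$. Since $t_0\in R$, I can pick $r\in R$ with $t_0<r<f(x_0)$, and then lower semicontinuity (applied with any $\delta>0$ for which $f(x_0)-\delta\geq r$, taking $\delta$ arbitrary if $f(x_0)=+\infty$) yields $\varepsilon>0$ with $f(y)\geq r$ on $B_\varepsilon(x_0)$. The open box $B_\varepsilon(x_0)\times R^{<r}$ is then a neighborhood of $(x_0,t_0)$ disjoint from $\epi(f)$.

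For (2) $\Rightarrow$ (3), I would observe for each $r\in R$ that $f^{-1}(R^{\leq r})\times\{r\}=\epi(f)\cap(R^n\times\{r\})$, an intersection of two closed sets in $R^{n+1}$, and then project onto the first $n$ coordinates (which restricts to a homeomorphism on $R^n\times\{r\}$) to conclude that $f^{-1}(R^{\leq r})$ is closed in $R^n$.

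Finally, for (3) $\Rightarrow$ (1), given $x_0\in R^n$ and $\delta>0$, the case $f(x_0)=-\infty$ is vacuous, so I would assume $f(x_0)>-\infty$ and pick $r\in R$ with $f(x_0)-\delta\leq r<f(x_0)$ (any $r\in R$ works when $f(x_0)=+\infty$). Then $x_0$ lies in the open complement of the closed sublevel set $f^{-1}(R^{\leq r})$, so there is $\varepsilon>0$ with $f(y)>r\geq f(x_0)-\delta$ throughout $B_\varepsilon(x_0)$. The only subtlety throughout is the careful bookkeeping of the extended values $\pm\infty$, but this is routine once one observes that the interesting cases always reduce to the finite one by interposing an auxiliary real number $r$.
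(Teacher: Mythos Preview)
Your proposal is correct and follows essentially the same cyclic route (1) $\Rightarrow$ (2) $\Rightarrow$ (3) $\Rightarrow$ (1) as the paper. The only cosmetic difference is in (1) $\Rightarrow$ (2): you show the complement of $\epi(f)$ is open by exhibiting an open box around a point outside, whereas the paper argues directly that every closure point of $\epi(f)$ lies in $\epi(f)$; the arguments for (2) $\Rightarrow$ (3) and (3) $\Rightarrow$ (1) are identical to the paper's, including the same handling of the cases $f(x_0)=\pm\infty$.
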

\begin{proof}
Suppose $f$ is l.s.c., and let $(x,r)\in\cl(\epi(f))$. Let $\delta>0$ be given, and choose $\varepsilon$ with $0<\varepsilon\leq\delta$ as in the definition of l.s.c.~above. There exists $(y,t)\in\epi(f)$ with $||x-y||<\varepsilon$ and $||r-t||<\varepsilon$. Hence
$$r+\delta > t \geq f(y) \geq f(x)-\delta.$$
Since this inequality holds for all $\delta>0$, we obtain $r\geq f(x)$, that is, $(x,r)\in\epi(f)$. This shows (1)~$\Rightarrow$~(2). The implication (2)~$\Rightarrow$~(3) follows from the identity
$$f^{-1}(R^{\leq r}) \times \{r\} = \epi(f)\cap (R^{n}\times \{r\}).$$
Suppose all sublevel sets of $f$ are closed, and let $x\in R^n$ and $\delta>0$ be given. Then $x\notin f^{-1}(R^{\leq r})$, where $r:=f(x)-\delta$ if $f(x)<\infty$ and $r:=0$ otherwise. Hence there exists $\varepsilon>0$ such that $y\notin f^{-1}(R^{\leq r})$ for all $y\in B_\varepsilon(x)$. Thus $f$ is l.s.c.
\end{proof}

For proper convex functions $R^n\to R_\infty$, we use {\bf closed} synonymously with l.s.c., and we also declare the constant functions $+\infty$ and $-\infty$ to be closed. Note that if
$f,g\colon R^n\to R_{\infty}$ are closed convex, then so is $\lambda f+\mu g$, for each $\lambda,\mu\in R^{\geq 0}$.

The following proposition is an analogue for definable convex functions of the supporting hyperplane lemma (Lemma~\ref{lem:supporting hyperplane}). It is proved similar to the case $R=\R$, see \cite[Proposition~IV.1.2.8]{HU-L} or \cite[Theorem~12.1]{Rockafellar}. A function $\varphi\colon R^n\to R$ of the form $x\mapsto \langle x,u\rangle-\alpha$ (where $u\in R^n$, $\alpha\in R$) is called {\bf affine.} 
Alternatively, $\varphi$ is affine if and only if $\varphi$ is both convex and concave. The epigraph of an affine function $R^n\to R$ is a closed halfspace in $R^{n+1}$. Below we let $\varphi$ (possibly with subscripts) range over all affine functions $R^n\to R$.

\begin{proposition}\label{prop:closed convex}
Suppose $f$ is definable. The following are equivalent:
\begin{enumerate}
\item $f$ is closed convex;
\item $f=\sup \big\{ \varphi: \varphi\leq f\big\}$;
\item $f=\sup_{a\in A} \varphi_a$ for some definable family of affine functions  $\{\varphi_a\}_{a\in A}$.
\end{enumerate}
\end{proposition}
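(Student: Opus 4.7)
My plan is to prove the cycle (3)$\Rightarrow$(1)$\Rightarrow$(2)$\Rightarrow$(3). The implication (3)$\Rightarrow$(1) is immediate from Lemma~\ref{lem:sup of convex functions}: each affine $\varphi_a$ has a closed half-space of $R^{n+1}$ as its epigraph, so $\epi(\sup_{a\in A}\varphi_a)=\bigcap_{a\in A}\epi(\varphi_a)$ is closed and convex, making $\sup_{a\in A}\varphi_a$ a closed convex function (with $\sup_{a\in\emptyset}\varphi_a=-\infty$ and an empty intersection corresponding to $\sup\equiv+\infty$, both declared closed). For (2)$\Rightarrow$(3), the obvious definable parametrization works: set $\varphi_{(u,\alpha)}(x):=\langle x,u\rangle-\alpha$ for $(u,\alpha)\in R^n\times R$, and let
\[
A:=\bigl\{(u,\alpha)\in R^n\times R:\varphi_{(u,\alpha)}\leq f\bigr\},
\]
which is definable since $f$ is; then $\{\varphi_a\}_{a\in A}$ is a definable family whose pointwise supremum equals $\sup\{\varphi:\varphi\leq f\}=f$ by (2).

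The content lies in (1)$\Rightarrow$(2). The two constant cases are trivial: if $f\equiv+\infty$ then every affine function lies below $f$ and their supremum is $+\infty$; if $f\equiv-\infty$ then $\{\varphi:\varphi\leq f\}=\emptyset$ and (2) follows from $\sup\emptyset=-\infty$. So assume $f$ is a proper closed convex function $R^n\to R_\infty$. Fix $x_0\in R^n$ and $r\in R$ with $r<f(x_0)$; it suffices to exhibit an affine $\varphi\leq f$ with $\varphi(x_0)>r$. The set $\epi(f)\subseteq R^{n+1}$ is closed and convex, and the singleton $\{(x_0,r)\}$ is closed and bounded and disjoint from it, so Corollary~\ref{cor:separation} applies. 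Inspecting the proof of Proposition~\ref{prop:separation}, where the separating hyperplane passes through the midpoint of $x$ and its nearest point, the separation is in fact \emph{strict}: there exist $(v,\beta)\in R^{n+1}\setminus\{0\}$ and $\gamma_1<\gamma_2$ in $R$ with
\[
\langle y,v\rangle+t\beta\leq\gamma_1\ \text{for all }(y,t)\in\epi(f),\qquad\langle x_0,v\rangle+r\beta\geq\gamma_2.
\]
Stability of $\epi(f)$ under increasing the last coordinate forces $\beta\leq 0$; write $\beta'=-\beta\geq 0$.

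If $\beta'>0$, the affine function $\varphi(y):=(\langle y,v\rangle-\gamma_1)/\beta'$ satisfies $\varphi\leq f$ on $\dom(f)$, hence on all of $R^n$ since $f\equiv+\infty$ off $\dom(f)$, and a direct computation yields $\varphi(x_0)\geq r+(\gamma_2-\gamma_1)/\beta'>r$. If $x_0\in\dom(f)$, the case $\beta'=0$ is impossible because then $\langle x_0,v\rangle\leq\gamma_1$ (from $(x_0,f(x_0))\in\epi(f)$) while $\langle x_0,v\rangle\geq\gamma_2>\gamma_1$, a contradiction; this establishes (2) at every $x_0\in\dom(f)$ and, in particular, produces at least one global affine minorant $\varphi_1\leq f$. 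For $x_0\notin\dom(f)$ and $\beta'=0$, the separation reduces to $\ell(y):=\langle y,v\rangle-\gamma_1\leq 0$ on $\dom(f)$ and $\ell(x_0)\geq\gamma_2-\gamma_1>0$; the affine functions $\varphi_1+M\ell$ for $M\geq 0$ still satisfy $\varphi_1+M\ell\leq f$ everywhere (trivially off $\dom(f)$, and by $M\ell\leq 0$ on $\dom(f)$), yet $(\varphi_1+M\ell)(x_0)\to+\infty$ as $M\to+\infty$, so a sufficiently large $M$ gives the required affine minorant exceeding $r$ at $x_0$.

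The main obstacle, and the step where definable completeness is used decisively, is securing strict separation of $\epi(f)$ from the exterior point $(x_0,r)$. Corollary~\ref{cor:separation} rests on the existence of the metric projection onto a non-empty closed convex set (Corollary~\ref{cor:nearest point}) together with Lemma~\ref{lem:A+B closed} on closedness of Minkowski sums; once strict separation is in hand, the rest is an algebraic unpacking of the separating hyperplane into an affine functional, with the ``vertical'' case $\beta'=0$ resolved by superposing $\varphi_1$ with a scalar multiple of the supporting linear form $\ell$.
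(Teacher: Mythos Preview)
Your proof is correct and follows essentially the same route as the paper's: both establish (1)$\Rightarrow$(2) by separating an exterior point $(x_0,r)$ from the closed convex definable set $\epi(f)$ via a hyperplane, and both must argue that non-vertical (i.e., ``affine'') hyperplanes suffice. The paper's proof is terser, invoking Lemma~\ref{lem:supporting hyperplane} to write $\epi(f)$ as an intersection of closed halfspaces and then deferring to \cite[Proposition~IV.1.2.8]{HU-L} for the elimination of vertical halfspaces; your argument is more self-contained, explicitly handling the vertical case $\beta'=0$ by superposing a previously obtained affine minorant $\varphi_1$ with a large multiple of the linear form $\ell$---this is exactly the standard maneuver in the cited reference, so you have simply unpacked what the paper leaves as a citation.
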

\begin{proof}
The implication (2)~$\Rightarrow$~(3) is trivial, and (3)~$\Rightarrow$~(1) follows from Lemma~\ref{lem:sup of convex functions}. To show (1)~$\Rightarrow$~(2),  suppose $f$ is closed convex. We may assume that $f$ is proper, so $\epi(f)$ is a proper non-empty closed convex definable subset of $R^{n+1}$. Hence $\epi(f)$ is the intersection of all closed halfspaces containing $\epi(f)$. (Lemma~\ref{lem:supporting hyperplane}.) As in the case $R=\R$ one now shows that only the hyperplanes corresponding to epigraphs of affine functions are required in this intersection; cf.~proof of \cite[Proposition~IV.1.2.8]{HU-L}. (The reference to \cite[Proposition~1.2.1]{HU-L} in that proof is superfluous.)
\end{proof}


\subsection{Conjugates}
Let $f\colon R^n\to R_{\pm\infty}$ be definable.
The (Fenchel) {\bf conjugate of $f$}  is the definable function  $f^*\colon R^n\rightarrow R_{\pm\infty}$ given by
\[
f^*(x^*):=\sup_{x\in R^n}\big(\langle x,x^*\rangle-f(x)\big).
\]
Note that if there is $x_0\in R^n$ with $f(x_0)=-\infty$, then $f^*\equiv +\infty$, whereas if $f\equiv +\infty$ then $f^* \equiv -\infty$.
Clearly if $g\colon R^n\to R_{\pm\infty}$ is another definable function and $f\leq g$, then $f^*\geq g^*$. We summarize further properties of conjugates in the next lemma:
\begin{lemma}
Let $f\colon R^n\to R_{\pm\infty}$ be definable. Then:
\begin{enumerate}
\item The function $f^*$ is closed convex.
\item We have $f^{**}:=(f^*)^*\leq f$, with equality if and only if $f$ is closed convex. 
\item If $f$ is proper closed convex, then $f^*$ is proper, and
$\langle x,x^*\rangle\leq f(x)+f^*(x^*)$ for all $x,x^*\in R^n$. \textup{(}Fenchel-Young Inequality.\textup{)}
\end{enumerate}
\end{lemma}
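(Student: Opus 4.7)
The plan is to prove the three assertions in order, using Proposition~\ref{prop:closed convex} as the main tool.

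For (1), note that for each $x\in R^n$ with $f(x)\in R$, the map $x^*\mapsto\langle x,x^*\rangle-f(x)$ is an affine function of $x^*$, and by definition $f^*$ is the pointwise supremum over $x\in f^{-1}(R)$ of this definable family. If $f^{-1}(R)=\emptyset$ then $f\equiv+\infty$ (giving $f^*\equiv-\infty$) or $f$ takes the value $-\infty$ somewhere (giving $f^*\equiv+\infty$); both are closed convex by convention. Otherwise $f^*$ is the supremum of a definable family of affine functions, hence closed convex by Proposition~\ref{prop:closed convex}, (3)$\Rightarrow$(1).

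For (2), the inequality $f^{**}\leq f$ is a direct unwinding of definitions: for all $x,x^*\in R^n$, the defining supremum of $f^*$ yields $f^*(x^*)\geq\langle x,x^*\rangle-f(x)$, i.e., $\langle x,x^*\rangle-f^*(x^*)\leq f(x)$; taking $\sup_{x^*}$ of the left-hand side gives $f^{**}(x)\leq f(x)$. For the equivalence, the forward direction is immediate from (1) applied to $f^*$. For the reverse direction, suppose $f$ is closed convex. The improper cases $f\equiv\pm\infty$ are handled by the direct computation mentioned above, so assume $f$ is proper. By Proposition~\ref{prop:closed convex}, $f=\sup\{\varphi:\varphi\leq f\}$ where $\varphi$ ranges over affine functions. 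Writing $\varphi(x)=\langle x,x^*\rangle-\alpha$, the condition $\varphi\leq f$ translates to $\alpha\geq\sup_x(\langle x,x^*\rangle-f(x))=f^*(x^*)$, so $\varphi(x)=\langle x,x^*\rangle-\alpha\leq\langle x,x^*\rangle-f^*(x^*)\leq f^{**}(x)$. Taking the supremum over all such $\varphi$ yields $f\leq f^{**}$, completing the equality.

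For (3), the Fenchel-Young inequality is literally a restatement of $f^*(x^*)\geq\langle x,x^*\rangle-f(x)$. To show $f^*$ is proper, first, choose some $x_0\in\dom(f)$ (which exists by properness of $f$); then for every $x^*\in R^n$ we have $f^*(x^*)\geq\langle x_0,x^*\rangle-f(x_0)>-\infty$, so $f^*$ never attains $-\infty$. It remains to exhibit one $x^*_0$ with $f^*(x^*_0)<+\infty$. This is where I would again invoke Proposition~\ref{prop:closed convex}: since $f$ is closed convex and not identically $-\infty$ (by properness), the representation $f=\sup\{\varphi:\varphi\leq f\}$ cannot have an empty index set (otherwise $f\equiv-\infty$), so some affine $\varphi(x)=\langle x,x^*_0\rangle-\alpha$ satisfies $\varphi\leq f$, which rearranges to $f^*(x^*_0)\leq\alpha<+\infty$.

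The only subtlety — and thus the potential obstacle — is the use of Proposition~\ref{prop:closed convex} in the proper-closed-convex direction of (2) and in (3): one must confirm the supremum representation genuinely produces a nonempty family of affine minorants in the relevant cases. This, however, is built into the equivalence of (1) and (2) in Proposition~\ref{prop:closed convex}, so no additional argument is needed. The remaining steps are straightforward manipulations of definitions.
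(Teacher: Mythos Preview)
Your proof is correct and follows essentially the same approach as the paper: both rely on Proposition~\ref{prop:closed convex} to reduce the biconjugate identity in (2) to the representation of a closed convex function as a supremum of affine minorants. The only minor difference is in (3): the paper deduces properness of $f^*$ from the just-established identity $f=f^{**}$ (arguing that if $f^*$ were improper it would be identically $\pm\infty$, forcing $f=f^{**}\equiv\mp\infty$), whereas you argue directly by exhibiting an affine minorant of $f$; both arguments are short and equivalent in strength.
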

\begin{proof}
Clearly $f^*$ is closed convex, being the supremum of a definable family of affine functions. This shows (1), and also that $f$ is closed convex if $f^{**}=f$. It is easy to check that $f^{**}\leq f$, with equality if $f$ is an affine function $R^n\to R$.  Hence if $f$ is closed convex, then
for every affine function $\varphi\colon R^n\to R$ with $\varphi\leq f$ we have $\varphi=\varphi^{**}\leq f^{**}\leq f$. Thus $f^{**}=f$  by Proposition~\ref{prop:closed convex}.
This shows (2). Note that (2) implies that if $f$ is proper closed convex, then $f^*$ is proper, since the only improper closed convex functions are $+\infty$ and $-\infty$, which are conjugate to each other.  The Fenchel-Young Inequality is now immediate.
\end{proof}

Given $\lambda\in R^{>0}$ we define 
$$\lambda\ast f\colon R^n\to R_{\pm\infty},\qquad (\lambda\ast f)(x)=\lambda f(x/\lambda) \quad\text{for $x\in R^n$.}$$ 
Note that if $f$ and $g\colon R^n\to R_\infty$ are definable and proper, then $\lambda\ast (f\boxempty g)=(\lambda\ast f)\boxempty (\lambda\ast g)$.
The formulas in the following lemma are useful for computing conjugates. 

\begin{lemma}\label{lem:conjugates}
Let $f,g\colon R^n\to R_{\infty}$ be definable and convex.
\begin{enumerate}
\item For all $\lambda>0$, we have $(\lambda f)^*=\lambda\ast f^*$ and $(\lambda\ast f)^*=\lambda f^*$.
\item Let $A\colon R^n\to R^m$ be $R$-linear. Then $(Af)^*=f^*\circ A^*$, where $A^*\colon R^m\to R^n$ is the adjoint of $A$. In particular, $(f\boxempty g)^* = f^* + g^*$.
\item Suppose $$f(x)=g(x-a)+\langle x,a^*\rangle+\alpha\qquad \text{for all $x\in R^n$,}$$
where $a,a^*\in R^n$ and $\alpha\in R$. Then
$$f^*(x^*)=g^*(x^*-a^*)+\langle x^*,a\rangle+\alpha^*\qquad \text{for all $x^*\in R^n$,}$$
where $\alpha^*=-\alpha-\langle a,a^*\rangle$.
\end{enumerate}
\end{lemma}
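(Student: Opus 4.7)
The plan is to verify each identity by straightforward manipulation of the definition of the conjugate, since none of these formulas requires anything deeper than a change of variables together with the definitions of scaling, the linear-image operation $Af$, and the infimal convolution. So the approach is essentially routine computation, and the hardest step will simply be handling the bookkeeping in (2) when identifying the convolution as an image under a linear map.

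For part (1), I would write out $(\lambda f)^*(x^*)=\sup_x(\langle x,x^*\rangle-\lambda f(x))$ and factor out $\lambda$ after the substitution $x\mapsto x$ (no change needed) to recognize this as $\lambda f^*(x^*/\lambda)=(\lambda\ast f^*)(x^*)$. For the companion formula, I would substitute $y=x/\lambda$ in $(\lambda\ast f)^*(x^*)=\sup_x(\langle x,x^*\rangle-\lambda f(x/\lambda))$ to obtain $\lambda\sup_y(\langle y,x^*\rangle-f(y))=\lambda f^*(x^*)$.

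For part (2), I would unwind the definition:
\[
(Af)^*(x^*)=\sup_{x\in R^m}\Bigl(\langle x,x^*\rangle-\inf_{A(y)=x}f(y)\Bigr)=\sup_{y\in R^n}\bigl(\langle A(y),x^*\rangle-f(y)\bigr)=f^*(A^*(x^*)),
\]
using the defining property $\langle A(y),x^*\rangle=\langle y,A^*(x^*)\rangle$ of the adjoint. The ``In particular'' clause follows by applying this to the function $h\colon R^{2n}\to R_\infty$, $h(y,z):=f(y)+g(z)$, together with the linear map $A\colon R^{2n}\to R^n$, $A(y,z):=y+z$: indeed $Ah=f\boxempty g$, the adjoint is $A^*(x^*)=(x^*,x^*)$, and a direct computation gives $h^*(y^*,z^*)=f^*(y^*)+g^*(z^*)$, so $(f\boxempty g)^*(x^*)=h^*(x^*,x^*)=f^*(x^*)+g^*(x^*)$.

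For part (3), I would substitute $y=x-a$ directly into
\[
f^*(x^*)=\sup_{x}\bigl(\langle x,x^*\rangle-g(x-a)-\langle x,a^*\rangle-\alpha\bigr)=\sup_{x}\bigl(\langle x,x^*-a^*\rangle-g(x-a)\bigr)-\alpha,
\]
obtaining $\sup_y(\langle y+a,x^*-a^*\rangle-g(y))-\alpha=g^*(x^*-a^*)+\langle a,x^*-a^*\rangle-\alpha$, which rearranges to $g^*(x^*-a^*)+\langle x^*,a\rangle+\alpha^*$ with $\alpha^*=-\alpha-\langle a,a^*\rangle$. The only point worth noting is that in each case the suprema are taken over definable families of affine functions, so the resulting conjugates are automatically definable; this addresses the mild issue that we are working in the definable category rather than over $\R$.
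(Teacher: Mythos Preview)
Your proof is correct and follows the same approach as the paper: direct computation from the definition of the conjugate. In fact the paper gives even less detail than you do---it simply says part~(1) is ``easily verified by direct computation'' and for~(2) and~(3) refers the reader to standard references (Z\u{a}linescu and Rockafellar, respectively)---so your write-up is a faithful elaboration of what the authors intend.
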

\begin{proof}
Part (1) is easily verified by direct computation. For (2) see \cite[Theorem~2.3.1,~(ix)]{Zali}, and for (3) see \cite[Theorem~12.3]{Rockafellar}.
\end{proof}

If $g\colon R^n\to R_{\pm\infty}$ is definable and concave (so $-g$ is convex), then
the {\bf conjugate of $g$} is the definable function  $g^*\colon R^n\rightarrow R_{\pm\infty}$ given by
\[
g^*(x^*):=\inf_{x\in R^n}\big(\langle x,x^*\rangle-g(x)\big)=-(-g)^*(-x^*).
\]
Next we show a definable version of the 
Fenchel Duality Theorem \cite[Theorem~31.1]{Rockafellar} in a special case:

\begin{proposition}\label{prop:duality}
Let $f\colon R^n\to R_\infty$ be definable proper convex, and let $g\colon R^n\to R$ be definable continuous concave. Then
$$\inf_{x\in R^n} \big(f(x)-g(x)\big) = \max_{x^*\in R^n} \big(g^*(x^*)-f^*(x^*)\big).$$
\end{proposition}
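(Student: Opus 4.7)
The plan is to run the classical Fenchel duality argument: weak duality via Fenchel--Young, and strong duality by separating the epigraph of $f$ from the (open) strict hypograph of $g$ shifted by $\mu$. Corollary~\ref{cor:separation} supplies the separating hyperplane, and continuity of $g$ everywhere on $R^n$ lets us check that this hyperplane is non-vertical, which yields the desired $x^*$.

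\emph{Weak duality.} For arbitrary $x,x^*\in R^n$, the definition of $f^*$ gives $f(x)\geq\langle x,x^*\rangle-f^*(x^*)$, and the definition of $g^*$ as an infimum (since $g$ is concave) gives $g(x)\leq\langle x,x^*\rangle-g^*(x^*)$. Subtracting yields $f(x)-g(x)\geq g^*(x^*)-f^*(x^*)$, hence $\inf_x(f-g)\geq\sup_{x^*}(g^*-f^*)$.

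\emph{Strong duality in the main case.} Set $\mu:=\inf_x(f(x)-g(x))$ and suppose first $\mu\in R$. In $R^{n+1}$ consider the convex sets $C:=\epi(f)$ and $H:=\{(x,r):r<g(x)+\mu\}$. The set $H$ is open (by continuity of $g$) and non-empty; $C$ is non-empty (since $f$ is proper); and $C\cap H=\emptyset$, because $(x,r)\in C\cap H$ would force $f(x)\leq r<g(x)+\mu$, contradicting the definition of $\mu$. By Corollary~\ref{cor:separation} there exist $(u,\lambda)\in R^n\times R$, not both zero, and $\beta\in R$ with $\langle x,u\rangle+\lambda r\leq\beta$ on $C$ and $\langle x,u\rangle+\lambda r\geq\beta$ on $H$. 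Unboundedness of $r$ above on $C$ over any $x\in\dom(f)$ forces $\lambda\leq 0$; and $\lambda=0$ would, via unboundedness of $r$ below on $H$ for arbitrary $x$, give $\langle x,u\rangle\geq\beta$ for every $x\in R^n$, so $u=0$ and then $(u,\lambda)=0$, a contradiction. Thus $\lambda<0$; rescale so $\lambda=-1$ and set $x^*:=u$. Evaluating the inequality on $C$ at $r=f(x)$ for $x\in\dom(f)$ gives $\langle x,x^*\rangle-f(x)\leq\beta$, hence $f^*(x^*)\leq\beta$; and on $H$, letting $r\to g(x)+\mu$ from below yields $\langle x,x^*\rangle-g(x)\geq\beta+\mu$ for every $x$, hence $g^*(x^*)\geq\beta+\mu$. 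Therefore $g^*(x^*)-f^*(x^*)\geq\mu$, which combined with weak duality forces equality at $x^*$ and attainment of the maximum.

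\emph{The degenerate case, and the main obstacle.} If $\mu=-\infty$ the supremum is attained trivially: continuity of $g$ forces $g^*<+\infty$ everywhere, properness of $f$ forces $f^*>-\infty$ everywhere, so $g^*-f^*$ is always well-defined in $R_{\pm\infty}$, and weak duality makes it equal to $-\infty$ at every $x^*$. The one genuinely delicate step is verifying that the separating hyperplane is non-vertical, i.e.\ $\lambda\neq 0$; this relies on $\dom(g)=R^n$, which here plays the role of the Slater-type interior condition indispensable in the classical Fenchel duality theorem.
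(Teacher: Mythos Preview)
Your proof is correct and follows essentially the same route as the paper: Fenchel--Young for weak duality, then separation of $\epi(f)$ from the open strict hypograph of $g+\mu$ via Corollary~\ref{cor:separation}, followed by the non-verticality argument exploiting $\dom(g)=R^n$. Your treatment is slightly more detailed than the paper's (which dispatches non-verticality in one sentence and omits the $\mu=-\infty$ case entirely), but the strategy is identical.
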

\begin{proof}
For all $x,x^*\in R^n$ we have $$f(x)+f^*(x^*) \geq \langle x,x^* \rangle \geq g(x)+g^*(x^*)$$ by Fenchel-Young, hence $\inf_x \big(f(x)-g(x)\big) \geq \sup_{x^*}  \big(g^*(x^*)-f^*(x^*)\big)$. Set $\alpha:=\inf_x \big(f(x)-g(x)\big)$; we may assume $\alpha>-\infty$. It now suffices to show that there exists $x^*\in R^n$ such that $g^*(x^*)-f^*(x^*)\geq\alpha$. Consider the non-empty definable convex sets 
$$A:=\epi(f),\qquad B:=\big\{(x,t)\in R^{n+1} : t<g(x)+\alpha \big\}.$$ 
Then $B$ is open, and $A\cap B=\emptyset$. Hence by Corollary~\ref{cor:separation} there exists a hyperplane $H$ in $R^{n+1}$ separating $A$ and $B$. If $H$ were vertical, i.e., of the form $H=H'\times R$ for some hyperplane $H'$ in $R^n$, then $H'$ would separate $\dom(f)$ and $R^n$, which is impossible. Therefore $H$ is the graph of an affine function $x\mapsto \langle x,x^*\rangle-\alpha^*$ ($x^*\in R^n$, $\alpha^*\in R$).
Then for all $x\in R^n$ we have
$$f(x)\geq \langle x,x^*\rangle-\alpha^*\geq g(x)+\alpha.$$
This yields $\alpha=(\alpha^*+\alpha)-\alpha\leq g^*(x^*)-f^*(x^*)$ as required.
\end{proof}

\subsection{Examples of conjugates}
The functions discussed in the following examples will be of constant use below.

\begin{example}
The function $x\mapsto q(x):=\frac{1}{2}\norm{x}^2\colon R^n\to R$ is the only definable closed convex function $R^n\to R_\infty$ such that $q^*=q$.
\end{example}
\begin{proof}
To see that $q$ is convex use the identity
$$\norm{\lambda x + \mu y}^2 = 
\lambda\norm{x}^2 + \mu\norm{y}^2-\lambda\mu\norm{x-y}^2$$
which holds for all $x,y\in R^n$ and $\lambda,\mu\in R^{\geq 0}$ with $\lambda+\mu=1$. Since $q$ is continuous, $q$ is closed. Let $f\colon R^n\to R_\infty$ be definable closed convex such that $f^*=f$. Then $f$ is proper, and by Fenchel's Inequality $\langle x,x\rangle\leq f(x)+f^*(x)=2f(x)$, thus $f\geq q$ and hence $f=f^*\leq q^*=q$, so $f=q$.
\end{proof}

\begin{example}
The conjugate of the convex function $\kappa\colon R^n\times R^n\rightarrow R$ given by $\kappa(x,y):=q(x-y)$ is the function $\kappa^*\colon R^n\times R^n\rightarrow R_\infty$ given by
\[
\kappa^*(x^*,y^*)=\begin{cases}
q(x^*),&\text{if } x^*=-y^*,\\
+\infty,&\text{otherwise}.
\end{cases}
\]
\end{example}
\begin{proof}
Suppose that $x^*\neq -y^*$. Then $\norm{x^*+y^*}>0$, hence
\begin{align*}
\kappa^*(x^*,y^*)&=\sup_{(x,y)}{\textstyle\left(\big\langle(x^*,y^*),(x,y)\big\rangle-\frac{1}{2}\norm{x-y}^2\right)}\\
&\hskip-2.1em\underbrace{\geq}_{x=y=t(x^*+y^*)} \sup_{t} \big\langle(x^*,y^*), t(x^*+y^*,x^*+y^*)\big\rangle=\sup_t t\norm{x^*+y^*}^2=\infty.
\end{align*}
We also have  
\[\textstyle
\frac{1}{2}\norm{x^*}^2=\big\langle(x^*,-x^*),(x^*,0)\big\rangle-\frac{1}{2}\norm{x^*-0}^2\leq \kappa^*(x^*,-x^*)
\]
and
\begin{align*}
\kappa^*(x^*,-x^*)&=\sup_{(x,y)}{\textstyle\left(\big\langle(x^*,-x^*),(x,y)\big\rangle-\frac{1}{2}\norm{x-y}^2\right)}\\
&= \sup_{(x,y)}{\textstyle\left(\langle x^*,x-y\rangle-\frac{1}{2}\norm{x-y}^2\right)}\\
&\leq \sup_{(x,y)}{\textstyle\left(\norm{x^*}\norm{x-y}-\frac{1}{2}\norm{x-y}^2\right)}\\
&\hskip-2em\underbrace{=}_{\norm{x-y}=t\norm{x^*}}\sup_{t}{\textstyle\norm{x^*}^2\left(t-\frac{t^2}{2}\right)}
=\textstyle\frac{1}{2}\norm{x^*}^2,
\end{align*}
hence $\kappa^*(x^*,-x^*)=\frac{1}{2}\norm{x^*}^2$.
\end{proof}

\begin{example} \label{ex:Delta}
The function $\Delta\colon R^n\times R^n\to R$ given by
$\Delta(x,y) := q(x+y)=\kappa(x,-y)$ is convex and continuous.
Note that $\Delta$ satisfies the useful identity
$$\textstyle\Delta(x,y) = \frac{1}{2}\norm{x}^2 + \langle x,y\rangle + \frac{1}{2}\norm{y}^2.$$
Fix $(a,b)\in R^n\times R^n$ and define $\delta\colon R^n\times R^n\to R$ by
$$\delta(x,y) := \Delta(a-x,b-y)-\langle x,y\rangle.$$
Let $(x^*,y^*)\in R^n\times R^n$.
Then
$$\delta^*(x^*,y^*) = \delta(-y^*,-x^*).$$
\end{example}
\begin{proof}
We have
$$\delta(x,y) = q\big((x,y)-(a,b)\big) - \big\langle (x,y), (b,a) \big\rangle + \langle a,b\rangle$$
and hence by Lemma~\ref{lem:conjugates},~(3):
\begin{align*}
\delta^*(x^*,y^*) &= q^*\big( (x^*,y^*)+(b,a) \big) + \big\langle (x^*,y^*), (a,b) \big\rangle + \langle a,b\rangle \\
&= q\big( (-y^*,-x^*)-(a,b) \big) - \big\langle (-y^*,-x^*), (b,a) \big\rangle + \langle a,b\rangle =\delta(-y^*,-x^*).
\end{align*}
\end{proof} 

The following observations about $\kappa$ are used in the next subsection:

\begin{lemma}\label{lem:inf attained}
Let $g\colon R^n\to R_\infty$ be definable proper closed convex, and let $\lambda\in R^{>0}$ and $x\in R^n$. Then
\[\inf_y g(y)+\kappa(x,\lambda y)=\min_y g(y)+\kappa(x,\lambda y).\]
\end{lemma}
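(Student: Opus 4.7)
The strategy is to show that the function $h\colon R^n\to R_\infty$ defined by $h(y):=g(y)+\kappa(x,\lambda y)$ is proper closed convex with bounded sublevel sets, and then apply Lemma~\ref{lem:monotone} to conclude that its infimum is attained. Since $y\mapsto \kappa(x,\lambda y)=\frac{1}{2}\norm{x-\lambda y}^2$ is finite, continuous, and convex, while $g$ is proper closed convex, the sum $h$ is proper closed convex; in particular $h$ is lower semicontinuous, so every sublevel set $S_t:=\{y\in R^n:h(y)\leq t\}$ is closed.

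Next I would establish coercivity by producing, via Proposition~\ref{prop:closed convex}, an affine minorant $\varphi(y)=\langle y,u\rangle-\alpha$ of $g$. Then
$$h(y)\geq \varphi(y)+\kappa(x,\lambda y)=\textstyle\frac{\lambda^2}{2}\norm{y-v}^2+c$$
for suitable $v\in R^n$ and $c\in R$ obtained by completing the square in $y$. This shows simultaneously that $h$ is bounded below on $R^n$ (so $\beta:=\inf_{y\in R^n} h(y)\in R$) and that each sublevel set $S_t$ is contained in an explicit closed bounded ball centered at $v$, hence is closed and bounded.

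Finally, the definable family $\{S_t\}_{t\in R^{>\beta}}$ is monotone decreasing in $t$ and consists of non-empty (by definition of $\beta$) closed bounded sets. Lemma~\ref{lem:monotone} then yields $\bigcap_{t>\beta} S_t\neq\emptyset$; any $y^*$ in this intersection satisfies $h(y^*)\leq t$ for every $t>\beta$, so $h(y^*)=\beta$, and the infimum is attained.

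The main obstacle is coercivity: in the classical setting one could invoke topological compactness to find a minimizer of a lower semicontinuous coercive function, but here one must carefully extract an affine minorant of $g$ from Proposition~\ref{prop:closed convex} and perform an explicit quadratic estimate to bound the sublevel sets, before invoking the definable monotone intersection principle of Lemma~\ref{lem:monotone} in place of compactness.
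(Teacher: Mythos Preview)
Your proposal is correct and follows essentially the same strategy as the paper: extract an affine minorant of $g$ via Proposition~\ref{prop:closed convex}, add the quadratic term $\kappa(x,\lambda y)$ to obtain coercivity, and then replace the missing topological compactness by a definable-completeness principle. The only cosmetic difference is the endgame: you apply Lemma~\ref{lem:monotone} to the nested family of sublevel sets $\{S_t\}_{t>\beta}$, whereas the paper truncates the epigraph of $h$ at a single finite level $h(z)$, observes that this truncated epigraph is closed and bounded, and applies Proposition~\ref{prop:cbd} to its projection onto the last coordinate.
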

\begin{proof}
By Proposition~\ref{prop:closed convex}, there is an affine function $\varphi\colon R^n\rightarrow R$ such that $\varphi\leq g$. 
So the definable function $h\colon R^n\rightarrow R_\infty$, $h(y):=g(y)+\kappa(x,\lambda y)$ is closed convex such that 
$\lim_{\norm{y}\rightarrow +\infty}h(y)=+\infty$. 
Take some $z\in R^n$ with $g(z)<\infty$. Then $B:=\{y\in R^n:h(y)\leq h(z)\}$ is closed and bounded, and the continuous definable function $y\mapsto \varphi(y)+\kappa(x,y)$ attains a minimum on $B$. (Corollary~\ref{cor:minmax}.) Hence
the definable set
\[
\epi(h)\cap\{(y,t)\in R^n\times R:t\leq h(z)\}=\{(y,t)\in R^n\times R: h(y)\leq t\leq h(z)\}
\]
is non-empty, closed, and bounded. 
So is its projection on the last coordinate. (Proposition~\ref{prop:cbd}.) Hence $h$ attains its infimum.
\end{proof}

\begin{lemma}\label{lem:infconv}
Let $g\colon R^n\times R^n\to R_\infty$ be definable and proper closed convex. Then  $g\boxempty\kappa^*$ is proper closed convex.
\end{lemma}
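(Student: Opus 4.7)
The plan is to establish convexity, properness, and closedness of $g \boxempty \kappa^*$ separately, with closedness being the main challenge. The crucial preliminary step is to rewrite the convolution using the explicit form of $\kappa^*$: since $\kappa^*$ is finite precisely on the antidiagonal $\{(x^*, -x^*) : x^* \in R^n\}$, where it equals $q(x^*)$, parametrizing the admissible pairs $(u, v)$ by $t := x - u$ (which forces $v = y + t$) yields
\[
(g \boxempty \kappa^*)(x, y) = \inf_{t \in R^n}\bigl[g(x - t, y + t) + q(t)\bigr].
\]
Convexity then follows from the general fact, recorded earlier, that convolutions of convex functions are convex.

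For properness, any $(a, b)$ with $g(a, b) < +\infty$ (which exists since $g$ is proper) gives $(g \boxempty \kappa^*)(a, b) \leq g(a, b) < +\infty$ via the formula at $t = 0$. For the lower bound, I would invoke Proposition~\ref{prop:closed convex} to obtain an affine minorant $\varphi(u, v) = \langle u, p\rangle + \langle v, p'\rangle - \alpha$ of $g$. The estimate $g(x-t, y+t) + q(t) \geq \varphi(x-t, y+t) + q(t)$, whose right-hand side is a convex quadratic in $t$ with minimum attained at $t = p - p'$, forces $(g \boxempty \kappa^*)(x, y) > -\infty$ for every $(x, y)$.

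Closedness is the main obstacle, as infimal convolutions of closed convex functions are not closed in general; here the special structure of $\kappa^*$ (coercivity on the antidiagonal via the quadratic $q$) is essential. Setting $h(x, y, t) := g(x - t, y + t) + q(t)$, which is a definable proper closed convex function on $R^{3n}$ (pullback of $g$ by an affine map, plus $q$), one has $\epi(g \boxempty \kappa^*) = \pi(E)$ where
\[
E := \bigl\{(x, y, s, t) \in R^{2n+1} \times R^n : h(x, y, t) \leq s\bigr\}
\]
is closed and $\pi\colon R^{3n+1} \to R^{2n+1}$ forgets the last coordinate. The affine-minorant estimate above, together with completion of the square, shows that $(x, y, s, t) \in E$ implies
\[
\tfrac{1}{2}\norm{t}^2 + \langle t, p' - p\rangle \leq s - \langle x, p\rangle - \langle y, p'\rangle + \alpha,
\]
so that $\norm{t}$ is bounded by a quantity depending continuously on $(x, y, s)$. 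In particular, for $(x, y, s)$ ranging over any bounded subset of $R^{2n+1}$, the admissible $t$ vary in a bounded set.

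To conclude, let $(x_0, y_0, s_0) \in \cl(\pi(E))$; for $\varepsilon \in (0, 1]$ set
\[
F_\varepsilon := \bigl\{(x, y, s, t) \in E : (x, y, s) \in \ol{B}_\varepsilon(x_0, y_0, s_0)\bigr\}.
\]
Each $F_\varepsilon$ is definable, closed, bounded (by the uniform $t$-estimate), and non-empty by definition of closure, and the family is monotone decreasing as $\varepsilon$ shrinks. Lemma~\ref{lem:monotone} then furnishes an element of $\bigcap_{\varepsilon \in (0,1]} F_\varepsilon$, which is necessarily of the form $(x_0, y_0, s_0, t)$ with $h(x_0, y_0, t) \leq s_0$; this shows $(x_0, y_0, s_0) \in \pi(E) = \epi(g \boxempty \kappa^*)$, completing the proof that $\epi(g \boxempty \kappa^*)$ is closed.
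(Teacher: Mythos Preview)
Your proof is correct and takes a more direct route than the paper's. Both arguments reduce closedness to showing that a certain definable monotone family of closed sets has bounded members, and then invoke Lemma~\ref{lem:monotone}; the difference lies in how boundedness is obtained.

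The paper works with $\epi(g)+\epi(\kappa^*)\subseteq R^{2(2n+1)}$ directly (via \eqref{eq:epi of convolution}) and proves boundedness of the sets $C_\varepsilon$ by contradiction: it invokes the definable Bolzano--Weierstrass theorem (Proposition~\ref{prop:BW}) to extract a ``recession direction'' $(y',z')$, and then argues from the shape of $\epi(\kappa^*)$ that this direction would force $\epi(g)$ to contain a vertical line, contradicting properness of $g$. Your parametrization by $t$ (exploiting that $\kappa^*$ is supported on the antidiagonal, where it equals $q$) collapses the fiber to a single vector in $R^n$, and the affine minorant of $g$ from Proposition~\ref{prop:closed convex} yields an explicit quadratic lower bound on $h$ that bounds $t$ directly---no contradiction, no Bolzano--Weierstrass. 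This is cleaner and uses strictly less machinery; it also makes the role of the quadratic term in $\kappa^*$ completely transparent.

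One small point: the identity $\epi(g\boxempty\kappa^*)=\pi(E)$ that you assert is equivalent to the infimum over $t$ being attained, which you do not quite justify (it does follow from your coercivity estimate by the argument of Lemma~\ref{lem:inf attained}). But you do not actually need the equality: the trivial inclusions $\pi(E)\subseteq\epi(g\boxempty\kappa^*)\subseteq\cl(\pi(E))$ already show that closedness of $\pi(E)$ forces closedness of $\epi(g\boxempty\kappa^*)$, so your argument goes through unchanged.
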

\begin{proof}
For $(x,x^*)\in R^n\times R^n$, we have
$$(g\boxempty\kappa^*)(x,x^*) = \inf_{y\in R^n} g(x-y,x^*+y)+q(y),$$
and by the previous lemma, the infimum is attained, so $-\infty < g\boxempty \kappa^*\leq g$, showing that $g\boxempty \kappa^*$ is proper.
Set $C:=\epi(g)\times\epi(\kappa^*)\subseteq R^m$, where $m=2(2n+1)$. By \eqref{eq:epi of convolution}, it remains to show that the definable convex set 
\[
\epi(g)+\epi(\kappa^*)=\{y+z: (y,z)\in C\}
\]
is closed. (Recall from Section~\ref{sec:Minkowski} that the sum of two closed convex sets is not closed in general.)
Let $x\in \cl(\epi(g)+\epi(\kappa^*))$ and
 $\varepsilon>0$. The definable set
\[
C_\varepsilon:=\big\{(y,z)\in C: \norm{x-(y+z)}\leq \varepsilon\big\}
\]
is closed, convex, and non-empty.

\begin{claim} $C_\varepsilon$ is  bounded.
\end{claim}

\begin{proof}[Proof of the claim]
For $t>0$ let $S^m(t):=\{x\in R^m:\norm{x}=t\}$.
Assume for a contradiction that $C_\varepsilon$ is unbounded. Take an arbitrary $p=(y,z)\in C_\varepsilon$.
Then there is a definable unbounded subset $I\subseteq R^{>0}$ such that 
$(p+S^{m}(t))\cap C_\varepsilon\neq\emptyset$ for each $t\in I$. By weak definable choice (Lemma~\ref{lem:defchoice}), there is a definable function $\tilde{\gamma}\colon I\to C_\varepsilon$ with $\tilde{\gamma}(t)\in (p+S^{m}(t))\cap C_\varepsilon$ for all $t\in I$. 
Consider $\gamma\colon I\to S^{m}(1)$ defined by $\gamma(t):=\frac{1}{t}(\tilde{\gamma}(t)-p)$. By Proposition~\ref{prop:BW}, after replacing $I$ by a suitable unbounded definable subset, we may assume that $\gamma$ converges. Let $p'=(y',z'):=\lim_{I\ni t\to \infty} \gamma(t)\in S^m(1)$.

\medskip

Then for every $\lambda\geq 0$, we have $p+\lambda p'\in C_\varepsilon$.
Indeed, observe that for every $t\in I$ we have $[p,p+t\gamma(t)]\subseteq C_\varepsilon$. Suppose for a contradiction that $\lambda>0$ satisfies
\begin{equation}\label{eq:dist to Cepsilon}
\delta:=d(p+\lambda p', C_\varepsilon)>0.
\end{equation}
Take $t\in I$ such that $t\geq\lambda$ and $\norm{\gamma(t)-p'}<\delta/\lambda$. Then
$$d(p+\lambda p', C_\varepsilon) \leq \norm{p+\lambda p'-(p+\lambda\gamma(t))}=\lambda\norm{p'-\gamma(t)}<\delta,$$
which contradicts \eqref{eq:dist to Cepsilon}.

\medskip

So we have
$\norm{x-y-z-\lambda (y'+z')}\leq \varepsilon$ for every choice of $\lambda\geq 0$.
Hence, $y'=-z'$. Moreover, $y+\lambda y'\in\epi(g)$ and $z+\lambda z'\in \epi(\kappa^*)$ for every $\lambda\geq 0$.
But the only possible $z'$ is $z'=(0,\dots,0,t)$ for some $t>0$.
Therefore, $y'=(0,\dots,0,-t)$, which implies that $\epi(g)$ contains a vertical line.
This contradicts that $g$ is proper.
\end{proof}

By the claim,  $\{C_\varepsilon\}_{\varepsilon>0}$ is a monotone definable family of non-empty closed and bounded sets, so $\bigcap_{\varepsilon>0} C_\varepsilon\neq\emptyset$ by Lemma~\ref{lem:monotone}.
Hence there is $(y,z)\in\bigcap_{\varepsilon>0} C_\varepsilon\subseteq C$ such that $y+z=x$.
\end{proof}
\subsection{Proximal average}
Let $f,g\colon R^n\to R_\infty$ be definable. The definable function $\psi=\psi(f,g)\colon R^n\to R_{\pm\infty}$ given by
\[
\psi(x):=\inf_{y+z=x}\textstyle (\frac{1}{2}\ast f)(y)+(\frac{1}{2}\ast g)(z)+\kappa(y,z)
\]
is called the {\bf proximal average} of $f$ and $g$. This construction (cf.~\cite{Bauschke1, Bauschke2}) plays a key role in extending monotone set-valued maps in the next section. 

\begin{lemma}
Suppose $f$ and $g$ are proper closed convex. Then $\psi(f,g)$ is proper convex, with conjugate $\big(\psi(f,g)\big)^* = \psi(f^*,g^*)$.
\end{lemma}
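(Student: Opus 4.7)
The strategy is to express $\psi(f,g)$ as a linear image of a sum of two specific proper closed convex functions on $R^n\times R^n$, and then apply the conjugation rules developed above. Put
$$h_1(y,z):=(\tfrac{1}{2}\ast f)(y)+(\tfrac{1}{2}\ast g)(z),\qquad h_2(y,z):=\kappa(y,z).$$
Both are proper closed convex: $\kappa$ is continuous convex, and $\lambda\ast(\,\cdot\,)$ preserves proper closed convexity, as one sees directly from $\epi(\lambda\ast f)=\lambda\,\epi(f)$. With the $R$-linear map $A\colon R^{2n}\to R^n$, $A(y,z):=y+z$, whose adjoint is $A^*(x^*)=(x^*,x^*)$, the definition of the proximal average reads $\psi(f,g)=A(h_1+h_2)$.

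For the conjugate formula, since $h_1+h_2>-\infty$ pointwise, the definition of $\psi(f,g)^*$ collapses to
$$\psi(f,g)^*(x^*)=(h_1+h_2)^*(x^*,x^*),$$
which is also a direct instance of Lemma~\ref{lem:conjugates}(2) applied to $A(h_1+h_2)$. To identify $(h_1+h_2)^*$ with an infimal convolution, I would use the formula $(f\boxempty g)^*=f^*+g^*$ together with the involutivity of conjugation on proper closed convex functions to write
$$(h_1^*\boxempty h_2^*)^* = h_1^{**}+h_2^{**} = h_1+h_2.$$
By Lemma~\ref{lem:infconv}, applied to the proper closed convex function $h_1^*$, the convolution $h_1^*\boxempty h_2^* = h_1^*\boxempty\kappa^*$ is itself proper closed convex, and hence equal to its own bi-conjugate $(h_1+h_2)^*$. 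Using $h_1^*(y^*,z^*)=\tfrac{1}{2}f^*(y^*)+\tfrac{1}{2}g^*(z^*)$ (from Lemma~\ref{lem:conjugates}(1)) together with the explicit formula for $\kappa^*$, evaluation at $(x^*,x^*)$ gives
$$\psi(f,g)^*(x^*)=\inf_{a\in R^n}\tfrac{1}{2}f^*(x^*-a)+\tfrac{1}{2}g^*(x^*+a)+q(a);$$
the substitution $y:=(x^*-a)/2$, $z:=(x^*+a)/2$ rewrites the right-hand side as $\psi(f^*,g^*)(x^*)$.

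Properness of $\psi(f,g)$ then falls out: Lemma~\ref{lem:infconv} has already delivered that $\psi(f,g)^*=h_1^*\boxempty h_2^*$ is proper closed convex, so $\psi(f,g)^{**}$ is proper closed convex as well, and the general inequality $\psi(f,g)^{**}\leq\psi(f,g)$ forces $\psi(f,g)>-\infty$ at every point. On the other hand $\dom\psi(f,g)$ contains $A(\dom h_1)=\tfrac{1}{2}(\dom f+\dom g)$, which is non-empty, so $\psi(f,g)\not\equiv+\infty$. Convexity is a routine direct verification from convexity of $h_1+h_2$.

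The main obstacle is the passage from $(h_1^*\boxempty h_2^*)^*=h_1+h_2$ to $(h_1+h_2)^*=h_1^*\boxempty h_2^*$, which requires knowing that $h_1^*\boxempty h_2^*$ is closed. In the classical setting this would be handled by a coercivity or compactness argument; here it is precisely Lemma~\ref{lem:infconv} that does the work, its proof replacing topological compactness with the Definable Bolzano--Weierstrass Theorem.
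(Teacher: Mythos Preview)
Your proof is correct and follows essentially the same route as the paper: the same linear map $A$, the same decomposition $F=G+\kappa$ (your $h_1+h_2$), and the same crucial step of invoking Lemma~\ref{lem:infconv} to get closedness of $G^*\boxempty\kappa^*$ so that $(G+\kappa)^*=G^*\boxempty\kappa^*$. The one substantive difference is in the properness argument: the paper uses Lemma~\ref{lem:inf attained} directly to see that the infimum defining $\psi(x)$ is attained (hence $>-\infty$), whereas you argue indirectly via $\psi^{**}\leq\psi$; your version works, but note that the sentence ``$\psi(f,g)^*=h_1^*\boxempty h_2^*$ is proper closed convex'' conflates a function on $R^n$ with one on $R^{2n}$, and strictly speaking you still need to observe that the diagonal meets $\dom(h_1^*\boxempty\kappa^*)$---which follows immediately from your own formula $\psi(f,g)^*=\psi(f^*,g^*)$ together with the domain computation you already gave.
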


\begin{proof}
Define $A\colon R^n\times R^n\to R^n$ by $A(y,z)=y+z$; then $A^*\colon R^n\to R^n\times R^n$ is given by $A^*(x^*)=(x^*,x^*)$. Also define the proper closed convex functions
$F,G\colon R^n\times R^n\to R_\infty$ by
$$\textstyle G(y,z)=(\frac{1}{2}\ast f)(y)+(\frac{1}{2}\ast g)(z),\quad F(y,z) = G(y,z)+\kappa(y,z).$$ 
So for each $x\in R^n$ we have
$$\psi(x)=(AF)(x)=\inf_y G(y,x-y)+\kappa(x,2y).$$
Hence $\psi$ is convex, and by Lemma~\ref{lem:inf attained} the infimum is attained, so $\psi$ is proper.
By Lemma~\ref{lem:infconv}, the definable convex function $G^*\boxempty \kappa^*$ is closed, hence
$$\textstyle F^*=\left(G+\kappa\right)^* = 
\left(G^{**}+\kappa^{**}\right)^* =
\left(G^*\boxempty\kappa^*\right)^{**} =  
G^* \boxempty \kappa^*.$$
Now for all $y^*,z^*\in R^n$,
$$\textstyle G^*(y^*,z^*)  = \frac{1}{2}f^*(y^*) + \frac{1}{2}g^*(z^*).$$
Hence for all $x^*\in R^n$,
\begin{align*} 
\big(\psi(f,g)\big)^*(x^*)   &= (AF)^*(x^*) \\
							&= F^*(A^*(x^*)) \\
							&= \textstyle\left(G^*\boxempty\kappa^*\right)(x^*,x^*)\\
							&= \inf_{(y^*,z^*)}\textstyle\big( G^*(y^*,z^*)+\kappa^*\left(x^*-y^*, x^*-z^*\right)\big)\\
							&= \inf_{y^*+z^*=2x^*}\textstyle\big(\frac{1}{2}f^*(y^*)+\frac{1}{2}g^*(z^*)+q\big(\frac{1}{2}(y^*-z^*)\big)\big)\\
							&= \big(\psi(f^*,g^*)\big)(x^*).
\end{align*}
\end{proof}

Let $f\colon R^n\times R^n\rightarrow R_{\pm\infty}$ be definable. We define the transpose $f^\trans$ of $f$ by
$f^\trans(x,x^*):=f(x^*,x)$ for all $(x,x^*)\in R^n\times R^n$.
We say that $f$ is {\bf autoconjugate} if $f^*=f^\trans$. Note that if $f$ is autoconjugate, then $f=f^{*\trans}$ is closed convex.

\begin{proposition}\label{prop:autoconjugate}
Let $f\colon R^n\times R^n\rightarrow R_\infty$ be definable proper closed convex.
Then the proximal average $\psi(f,f^{*\trans})\colon R^n\times R^n\to R_\infty$ of $f$ and $f^{*\trans}$ is autoconjugate.
\end{proposition}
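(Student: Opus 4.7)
The plan is to reduce the autoconjugacy of $\psi(f,f^{*\trans})$ to the formula $\bigl(\psi(g,h)\bigr)^* = \psi(g^*,h^*)$ from the preceding lemma, together with two elementary bookkeeping identities relating the transpose to conjugation and to the proximal average.

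First I would record two algebraic facts about the $\trans$ operation. (a)~Transposition commutes with conjugation: for any definable $h\colon R^n\times R^n\to R_{\pm\infty}$, $(h^\trans)^*=(h^*)^\trans$. This is a one-line calculation from the definitions, interchanging the roles of the two $R^n$-factors in the supremum. In particular, applied to $f^*$ and using $f^{**}=f$ (since $f$ is proper closed convex), one obtains $\bigl(f^{*\trans}\bigr)^*=f^\trans$ and $\bigl(f^{*\trans}\bigr)^\trans=f^*$. (b)~The proximal average is symmetric in its arguments: $\psi(g,h)=\psi(h,g)$. This is immediate from the defining formula, since $\kappa(y,z)=q(y-z)=\kappa(z,y)$.

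Next I would establish the key compatibility between $\psi$ and $\trans$: for definable $g,h\colon R^n\times R^n\to R_\infty$,
\[
\psi(g,h)^\trans \,=\, \psi(g^\trans,h^\trans).
\]
This is shown by unraveling the infimum defining $\psi(g,h)$ on $R^n\times R^n=R^{2n}$, writing the decomposition variables as pairs $(u_1,u_2)+(v_1,v_2)=(x_1,x_2)$, and then substituting $(u_1',u_2'):=(u_2,u_1)$, $(v_1',v_2'):=(v_2,v_1)$. This substitution is a bijection on the feasible set, converts $g(2u_1,2u_2)$ into $g^\trans(2u_1',2u_2')$ (and similarly for $h$), and leaves the quadratic term $\kappa\bigl((u_1,u_2),(v_1,v_2)\bigr)=\tfrac12\|u_1-v_1\|^2+\tfrac12\|u_2-v_2\|^2$ invariant.

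Finally I would combine these ingredients. On the one hand, the preceding lemma combined with (a) gives
\[
\bigl(\psi(f,f^{*\trans})\bigr)^* \,=\, \psi\bigl(f^*,(f^{*\trans})^*\bigr) \,=\, \psi(f^*,f^\trans).
\]
On the other hand, the compatibility identity above combined with (a) and (b) gives
\[
\bigl(\psi(f,f^{*\trans})\bigr)^\trans \,=\, \psi\bigl(f^\trans,(f^{*\trans})^\trans\bigr) \,=\, \psi(f^\trans,f^*) \,=\, \psi(f^*,f^\trans).
\]
Comparing the two expressions shows $\bigl(\psi(f,f^{*\trans})\bigr)^*=\bigl(\psi(f,f^{*\trans})\bigr)^\trans$, i.e.\ $\psi(f,f^{*\trans})$ is autoconjugate. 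The main (though modest) obstacle is verifying the $\psi$-versus-$\trans$ compatibility carefully, since one must keep track of the interaction between the ``coordinate swap'' coming from $\trans$ and the decomposition variables $(y,z)$ internal to the definition of the proximal average; everything else is a formal consequence of the previous lemma.
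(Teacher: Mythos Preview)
Your proof is correct and follows essentially the same route as the paper: both use the preceding lemma $\bigl(\psi(g,h)\bigr)^*=\psi(g^*,h^*)$, the commutation $h^{*\trans}=h^{\trans *}$, the symmetry $\psi(g,h)=\psi(h,g)$, and the compatibility $\psi(g,h)^\trans=\psi(g^\trans,h^\trans)$. The paper strings these into a single chain of equalities, whereas you compute $\psi^*$ and $\psi^\trans$ separately and compare; you are also more explicit than the paper about why the $\psi$-versus-$\trans$ compatibility holds, which the paper uses tacitly in its final step.
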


\begin{proof}
Note that $f^{*\trans}=f^{\trans *}$ and hence $f^{*\trans*}=f^\trans$. So by the previous lemma,
$$\big(\psi(f,f^{*\trans})\big)^* = \psi(f^*,f^{*\trans*})  
= \psi(f^*, f^\trans) 
= \psi(f^\trans, f^*) 
= \psi(f^\trans, f^{*\trans\trans})
= \big(\psi(f,f^{*\trans})\big)^\trans.$$
\end{proof}

\begin{remark}
In the proof of the result analogous to Proposition~\ref{prop:autoconjugate} in \cite{Bauschke1}, appeals to more general results replace our use of the elementary Lemmas~\ref{lem:inf attained} and \ref{lem:infconv} above.
\end{remark}
\section{Proof of Theorem~\ref{thm:Kirszbraun}}\label{sec:Kirszbraun proof}
\noindent
Let $\mathfrak R$ be an expansion of a real closed ordered field.
In this section we prove Theo\-rem~\ref{thm:Kirszbraun}, which we state here again for the convenience of the reader, in a slightly strengthened form:

\begin{theorem}\label{DefKirsz}
Suppose $\mathfrak{R}$ is definably complete. Let $L\in R^{>0}$ and let $f\colon A\to B$, where $A\subseteq R^m$, $B\subseteq R^n$, be a definable $L$-Lipschitz map.
There exists a definable $L$-Lipschitz map $F\colon R^m\rightarrow \cl(\conv(B))$ such that $F|A=f$.
\end{theorem}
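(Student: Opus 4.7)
The strategy, following Bauschke–Wang~\cite{Bauschke1, Bauschke2}, is to reduce the theorem to an extension problem for firmly non-expansive maps and then solve that problem via the autoconjugate proximal average constructed in Section~\ref{sec:convex analysis}. First I would perform several reductions. By rescaling $f$ to $f/L$ I assume $L=1$, and by Lemma~\ref{lem:extension to closure} I may assume that $A$ is closed (and non-empty, since otherwise $F$ can be taken constant into any nonempty convex closure, which exists if $B\ne\emptyset$). To reduce to the square case $m=n$, I view $f$ as the $1$-Lipschitz map $\tilde f\colon A\times\{0_n\}\to R^{m+n}$ with $\tilde f(x,0)=(0_m,f(x))$; a $1$-Lipschitz extension $\tilde F\colon R^{m+n}\to R^{m+n}$ of $\tilde f$ yields, by projecting onto the last $n$ coordinates, a $1$-Lipschitz extension of $f$. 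With $m=n$, Proposition~\ref{prop44} converts the problem to extending the definable firmly non-expansive map $g:=\tfrac{1}{2}(f+\id_A)\colon A\to R^n$ to a definable firmly non-expansive $G\colon R^n\to R^n$, after which $F:=2G-\id$ is the desired non-expansive extension.

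The core construction associates to $g$ a proper closed convex \emph{representative} function $f_g\colon R^n\times R^n\to R_\infty$ whose ``equality set''---the points $(x,y)$ at which $f_g$ attains the lower bound prescribed by a Fenchel–Young-type inequality $f_g(x,y)\geq\langle x,y\rangle$---coincides with $\graph(g)$. Such $f_g$ exists because firmly non-expansive maps have monotone graphs; concretely it can be built as a Fitzpatrick-type supremum of a definable family of affine functions extracted from $\graph(g)$, together with appropriate use of the quadratic building blocks $\kappa,\Delta$ from Example~\ref{ex:Delta}. Then Proposition~\ref{prop:autoconjugate} produces the autoconjugate proximal average $\psi:=\psi(f_g,f_g^{*\trans})$, still proper closed convex by Lemma~\ref{lem:infconv}. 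Because $\psi\leq f_g$ pointwise (the proximal average arises from an infimal convolution that only lowers values), the equality set of $\psi$ contains $\graph(g)$.

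It remains to extract $G$ from $\psi$ and verify its properties. Autoconjugacy of $\psi$ together with the Fenchel–Young inequality forces the equality set of $\psi$ to be the graph of a single-valued map $G\colon R^n\to R^n$: if $(x,y),(x,y')$ both satisfy the equality, applying Fenchel–Young to $\psi$ and to $\psi^{*\trans}$ yields $y=y'$. The same autoconjugacy calculation, combined with the identity $\Delta(x,y)=q(x)+\langle x,y\rangle+q(y)$, shows that $G$ is firmly non-expansive. That $G$ is defined on all of $R^n$ uses properness of $\psi$ and Lemma~\ref{lem:inf attained} to ensure that the infima defining the equality set are attained at each $x$. Finally, to land in $C:=\cl(\conv(B))$, I compose the resulting $\tilde F\colon R^m\to R^n$ with the metric projection $p(\,\cdot\,,C)$, which by Corollary~\ref{cor:non-expansive} is non-expansive and fixes $C\supseteq f(A)$, giving a $1$-Lipschitz extension into $C$; rescaling recovers the $L$-Lipschitz statement.

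The main obstacle is the technical analysis of $f_g$ and $\psi$: choosing the representative $f_g$ so that it is proper, closed, and convex with equality set exactly $\graph(g)$; verifying that the proximal-average construction preserves the equality set while making it ``maximal'' in the sense that it is the graph of an everywhere-defined firmly non-expansive map; and carrying these steps out using only the definably complete toolkit of Section~\ref{sec:convex analysis}, without the classical compactness and subdifferential arguments (in particular without the theory of maximal monotone operators) that are unavailable here.
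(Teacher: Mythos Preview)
Your overall architecture---reduce to a firmly non-expansive extension problem, build an autoconjugate via the proximal average, and read off the extension from its equality set---matches the paper's. But there is a genuine gap: you apply the Fitzpatrick/proximal-average machinery to $g$ itself, whereas the paper applies it to the monotone set-valued map $T:=g^{-1}-\id$. This Minty-type passage is not cosmetic. An autoconjugate $\psi$ represents a \emph{maximal monotone} set-valued map, and maximal monotone extensions of a firmly non-expansive $g$ need not be single-valued, let alone firmly non-expansive (already in $R^1$: with $g(0)=0$, $g(2)=1$, the graph $\{(x,0):x\leq 1\}\cup(\{1\}\times[0,1])\cup\{(x,1):x\geq 1\}$ is maximal monotone and extends $g$). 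Your Fenchel--Young argument for single-valuedness does not go through: if $\psi(x,y)=\langle x,y\rangle$ and $\psi(x,y')=\langle x,y'\rangle$, autoconjugacy and Fenchel--Young only yield the trivial inequality $\langle x,y\rangle+\langle x,y'\rangle\geq\langle x,y'\rangle+\langle x,y\rangle$. Likewise, ``$\psi\leq f_g$ pointwise'' is not what the proximal average gives; one only gets $2\psi(\tilde x)\leq f_g(\tilde x)+f_g^{*\trans}(\tilde x)$, and it is the separate identification of \emph{both} summands on $\graph(T)$ (Lemma~\ref{lem:Phi conjugate}) that pins down the equality set.

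The paper's route avoids these issues: extend $T=g^{-1}-\id$ to a definable maximal monotone $\overline{T}$ via $\Psi_T$ (Proposition~\ref{prop:extensionoperator}), and then set $G:=(\overline{T}+\id)^{-1}$. Part~(1) of Proposition~\ref{maxmono} guarantees $G$ is single-valued and firmly non-expansive for \emph{any} monotone $\overline{T}$---no delicate analysis of $\psi$ needed. That $G$ has domain all of $R^n$ is part~(3) of Proposition~\ref{maxmono}, whose proof rests on Corollary~\ref{cor:extend non-expansive} and hence on the definable Helly Theorem~\ref{thm:Helly}; this is where Helly enters the Kirszbraun proof, not ``properness of $\psi$ and Lemma~\ref{lem:inf attained}.'' So the fix is small but essential: insert the passage $g\mapsto T=g^{-1}-\id$ before invoking Fitzpatrick and the proximal average, and replace your single-valuedness and surjectivity arguments by an appeal to Proposition~\ref{maxmono}.
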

In fact, the extra condition $F(R^m)\subseteq\cl(\conv(B))$ is easy to achieve once we have a definable $L$-Lipschitz map $F'\colon R^m\to R^n$ with $F'|A=f$: simply take $F:=p\circ F'$ where $p=p({-},\cl(\conv(B)))$, and recall that $p$ is non-expansive by Corollary~\ref{cor:non-expansive}.

\medskip
Naturally, the question arises whether the hypothesis of definable completeness in this theorem is necessary. This question is affirmatively answered by the following proposition.
\begin{proposition}\label{prop:def completeness necessary}
Suppose $\mathfrak{R}$ is {\em not} definably complete. Then there exists a definable non-expansive function $f\colon A\to R$, where $A\subseteq R$ is closed, which cannot be extended to a non-expansive function $R\to R$.
\end{proposition}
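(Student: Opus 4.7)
Since $\mathfrak R$ is not definably complete, there is a non-empty definable $S\subseteq R$ that is bounded above but has no supremum in $R$. Let $U:=\{b\in R: b\geq s \text{ for all } s\in S\}$ be the definable set of upper bounds of $S$; it is non-empty and has no least element. Writing $L:=R\setminus U$, the pair $(L,U)$ partitions $R$ into two clopen pieces, with $L$ having no greatest element. Mimicking the Helly counterexample mentioned in Section~\ref{sec:related results}, the definable family of closed bounded intervals $\{[l,u]:l\in L,\ u\in U,\ l\leq u\}$ has the pairwise intersection property, yet has empty intersection in $R$: any common point $c$ would lie in $U$ (being an upper bound of $L$) and simultaneously in $L$ (being a lower bound of $U$), contradicting $L\cap U=\emptyset$. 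My goal is to realize this one-dimensional Helly failure as the family of balls arising from an extension problem for a non-expansive map, thereby obstructing any non-expansive extension.

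The key observation is the characterization of non-expansive extensions: given $f\colon A\to R$ non-expansive, any non-expansive $F\colon R\to R$ with $F|A=f$ must satisfy
\[
F(x)\in\bigcap_{a\in A}\bigl[f(a)-|x-a|,\,f(a)+|x-a|\bigr]\qquad\text{for every }x\in R.
\]
Thus it suffices to construct a closed definable $A\subseteq R$, a definable non-expansive $f\colon A\to R$, and a point $x_0\in R\setminus A$ at which the above intersection is empty. Ball-centres correspond to values $f(a)$ and ball-radii to distances $|x_0-a|$; I would therefore parametrize a one-dimensional sub-family of the Helly counterexample by choosing definable maps $a\mapsto(l(a),u(a))\in L\times U$ and setting $f(a):=\tfrac{l(a)+u(a)}{2}$, $|x_0-a|:=\tfrac{u(a)-l(a)}{2}$, so that the associated balls reproduce the sub-family $\{[l(a),u(a)]:a\in A\}$. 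The non-expansiveness of $f$ reduces to the triangle inequalities $|l(a)-l(a')|,|u(a)-u(a')|\leq |a-a'|$ together with compatibility with the radii, which can be arranged by making $f$ tight (equality in the Lipschitz inequality) across $x_0$.

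The main obstacle is ensuring that $A$ is closed. For the intersection of balls to be empty in $R$, the families $\{l(a)\}$ and $\{u(a)\}$ must be cofinal in $L$ and coinitial in $U$ respectively (so that admissible values collapse to the missing Dedekind point $\sup S$), which forces $u(a)-l(a)\to 0$, i.e.\ $|x_0-a|\to 0$; but then $x_0$ is a limit of $A$, and closedness of $A$ would force $x_0\in A$, at which point $f(x_0)$ would itself have to lie in the (empty) intersection. To circumvent this, I would take $A$ to consist of two closed pieces $A_-\subseteq(-\infty,x_0-d]$ and $A_+\subseteq[x_0+d,\infty)$ separated from $x_0$ by some fixed $d>0$, with $f$ designed so that the tight non-expansive configuration occurs \emph{between pairs} $(a,a')\in A_-\times A_+$ straddling $x_0$: for such pairs we have $|a-a'|=|x_0-a|+|x_0-a'|$, so the bound $|f(a)-f(a')|\leq|a-a'|$ can be saturated, which makes the intersection $\bigcap_a[f(a)-|x_0-a|,f(a)+|x_0-a|]$ collapse to a single point. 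The construction then proceeds by choosing $f$ so that this forced point is a representative of the Dedekind gap (obtained from $S$), hence not in $R$. Verifying closedness of $A$, non-expansiveness of $f$, and emptiness of the intersection at $x_0$---with everything definable---is the technical heart of the proof, and precisely this balance between separation from $x_0$ and cofinality/coinitiality at the Dedekind gap is the subtle point that I expect to be the main difficulty.
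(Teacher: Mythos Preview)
Your high-level plan is exactly the paper's: use the Dedekind cut $(L,U)$ coming from a definable bounded set without supremum, build a closed two-piece domain $A=A_-\cup A_+$ on either side of some $x_0$, and show the ball-intersection $\bigcap_{a\in A}[f(a)-|x_0-a|,f(a)+|x_0-a|]$ is empty. But the specific mechanism you propose for forcing emptiness does not work. If a cross-pair $a\in A_-$, $a'\in A_+$ \emph{saturates} the Lipschitz bound, $|f(a)-f(a')|=|x_0-a|+|x_0-a'|$, then the two intervals meet in a single point which is an \emph{endpoint} of each and therefore lies in $R$, not in the gap. Requiring saturation for all cross-pairs (with a consistent sign) forces $a\mapsto f(a)+a$ or $a\mapsto f(a)-a$ to be constant on $A$, so $f$ is the restriction of an affine map and extends trivially. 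Thus saturation either produces a point of $R$ in the intersection or collapses the example.

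The paper's realization is much simpler than you anticipate: take $A_-:=L$, $A_+:=1+U$, and let $f$ be \emph{piecewise constant}, namely $f\equiv 1$ on $A_-$ and $f\equiv 0$ on $A_+$. Both pieces are closed (you already observed $L$ and $U$ are clopen), and $\dist(A_-,A_+)\geq 1$ makes $f$ non-expansive; an affine change of $S$ arranges $A\neq R$. For $x_0\in R\setminus A$ and any non-expansive extension $F$, the inequalities $1-(x_0-a)\leq F(x_0)$ for all $a\in L$ and $F(x_0)\leq b-x_0$ for all $b\in 1+U$ force $\zeta:=F(x_0)+x_0$ to be simultaneously an upper bound of $1+L$ and a lower bound of $1+U$. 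Since $1+L$ has no supremum and $1+U$ no infimum, $\zeta$ lies in neither, contradicting $(1+L)\cup(1+U)=R$. So the ``collapse to the gap'' you are after is achieved not by saturation but because the left-endpoints (coming from $A_-$) and the right-endpoints (coming from $A_+$) trace out the two sides of the \emph{same} Dedekind cut.
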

\begin{proof}
Since $\mathfrak{R}$ is not definable complete, there exists a closed non-empty definable set $S\subseteq R$ which is bounded from above and which does not have a least upper bound in $R$. We let 
$$A_1:=\{a\in R : \text{$a\leq x$ for some $x\in S$}\},\qquad A_2:=R\setminus (1+A_1).$$
We have $S\subseteq A_1\subseteq 1+A_1 < A_2$.
Both $A_1$ and $A_2$ are closed, hence   $A:=A_1\cup A_2$ is a closed definable  subset of $R$.
After passing from $S$ to a suitable affine image $a+bS$ ($a,b\in R$), we may assume that $1+A_1\not\subseteq A_1$ and so $A\neq R$. 
 
Let $f\colon A\to R$ be defined by $f(x):=1$ if $x\in A_1$ and $f(x):=0$ if $x\in A_2$. Clearly, $f$ is definable and non-expansive.
Assume for a contradiction that there is a non-expansive $F\colon R\to R$ which extends $f$. Fix an arbitrary $x\in R\setminus A$;
then $x$ is an upper bound for $A_1$ and a lower bound for $A_2$. Hence, for all $y\in A_1$ and $z\in A_2$, we have 
\[
1+y-x=f(y)-\abs{y-x}\leq F(x)\leq f(z)+\abs{z-x}=z-x.
\]
So $\zeta:=F(x)+x$ is an upper bound for $1+A_1$ and a lower bound for $A_2$.
Thus $\zeta\not\in 1+A_1$ since $1+A_1$ has no least upper bound in $R$, and $\zeta\not\in A_2$ since $A_2$ has no largest lower bound in $R$, contradicting $R=(1+A_1)\cup A_2.$
\end{proof}

{\it In the rest of this section we assume that $\mathfrak R$ is definably complete.}

\medskip

We prove Theorem~\ref{DefKirsz} at the end of this section. In the rest of this subsection we mention two special cases of this theorem that are not hard to show directly. We let $f\colon A \to B$ be a definable map,  where $A$ is a non-empty subset of $R^m$ and $B\subseteq R^n$. First, Lemma~\ref{lem:inf and sup of Lipschitz functions} yields Theorem~\ref{thm:Kirszbraun} for a $1$-dimensional target space. More generally, we have the following result; here and below, a function $\omega\colon R^{\geq 0}\to R$ is said to be {\bf subadditive} if $\omega(s+t)\leq\omega(s)+\omega(t)$ for all $s,t\in R^{\geq 0}$. For example, it is easy to see that if $A$ is convex, then the modulus of continuity $\omega_f$ of $f$ is subadditive.

\begin{proposition}[McShane-Whitney]\label{prop:McShane-Whitney}
Suppose $n=1$ and $f$ has a definable increasing subadditive mod\-u\-lus of continuity $\omega$. Then
$$x\mapsto\inf_{a\in A} \big(f(a)+\omega\big(||x-a||\big)\big),\qquad x\mapsto\sup_{a\in A} \big(f(a)-\omega\big(||x-a||\big)\big)$$
are definable functions $R^n\to R$ extending $f$ with modulus of continuity $\omega$.
\end{proposition}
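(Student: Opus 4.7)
The plan is to verify directly that the two candidate extensions $F_+(x) := \inf_{a\in A}\bigl(f(a)+\omega(\norm{x-a})\bigr)$ and $F_-(x) := \sup_{a\in A}\bigl(f(a)-\omega(\norm{x-a})\bigr)$ have all the required properties; definability is automatic from the definability of $f$ and $\omega$, so the real work is finiteness, the extension property, and the modulus of continuity bound.

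First I would fix an arbitrary $a_0\in A$ and show that $F_+(x)>-\infty$ at every $x\in R^m$. For any $a\in A$, the hypothesis that $\omega$ is a modulus of continuity of $f$ gives $f(a)\geq f(a_0)-\omega(\norm{a-a_0})$, and since $\omega$ is increasing and subadditive, $\omega(\norm{a-a_0})\leq \omega(\norm{a-x})+\omega(\norm{x-a_0})$. Combining these yields
\[ f(a)+\omega(\norm{x-a})\geq f(a_0)-\omega(\norm{x-a_0}), \]
so $F_+(x)\geq f(a_0)-\omega(\norm{x-a_0})>-\infty$. The symmetric argument gives $F_-(x)<+\infty$. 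At this point I would invoke Lemma~\ref{lem:inf and sup of Lipschitz functions}, which reduces the whole proof of continuity to establishing that each of the functions $f_a\colon x\mapsto f(a)+\omega(\norm{x-a})$ (and respectively $x\mapsto f(a)-\omega(\norm{x-a})$) admits $\omega$ as a modulus of continuity: for this I use that $\omega$ is increasing and subadditive, giving $|\omega(\norm{x-a})-\omega(\norm{x'-a})|\leq \omega(\norm{x-x'})$ via the triangle inequality in $R^m$. The definable family $\{f_a\}_{a\in A}$ is then pointwise finite by the step above, and Lemma~\ref{lem:inf and sup of Lipschitz functions} yields that $F_+$ is finite and has modulus of continuity $\omega$; likewise for $F_-$.

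It remains to check the extension property $F_\pm|A=f$. For $a_0\in A$, setting $a=a_0$ in the infimum gives $F_+(a_0)\leq f(a_0)+\omega(0)=f(a_0)$ (as $\omega(0)=0$, which follows from $\omega\geq\omega_f$ and $\omega_f(0)=0$ once one notes that $\omega$ increasing and subadditive with $\omega\geq 0$ forces $\omega(0)=0$), and for any $a\in A$ we have $f(a_0)\leq f(a)+\omega(\norm{a_0-a})$, so taking the infimum over $a$ yields $f(a_0)\leq F_+(a_0)$. An entirely analogous argument handles $F_-$.

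The proof has no real obstacle: the only subtle point is ensuring that the inf over an infinite definable family gives a finite and $\omega$-continuous function, and this is precisely the content of Lemma~\ref{lem:inf and sup of Lipschitz functions}, which is where definable completeness is implicitly used. Everything else is formal manipulation with the triangle inequality and the subadditivity of $\omega$.
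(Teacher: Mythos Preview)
Your approach is exactly the paper's: reduce everything to Lemma~\ref{lem:inf and sup of Lipschitz functions} after checking that each $x\mapsto \omega(\norm{x-a})$ has modulus of continuity $\omega$, which you verify via the triangle inequality together with monotonicity and subadditivity of $\omega$. Your write-up is in fact more detailed than the paper's one-line sketch, since you also spell out finiteness and the extension property.

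One small correction: your justification that $\omega(0)=0$ is not right. Neither ``$\omega\geq\omega_f$ with $\omega_f(0)=0$'' nor ``$\omega$ increasing, subadditive, and $\geq 0$'' forces $\omega(0)=0$; for instance $\omega(t)=1+t$ is increasing and subadditive with $\omega(0)=1$. The statement of the proposition tacitly needs $\omega(0)=0$ for the formulas to genuinely extend $f$ (and the paper does not comment on this either), so simply take that as part of the hypothesis rather than trying to derive it.
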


To prove this, by Lemma~\ref{lem:inf and sup of Lipschitz functions} one only needs to show that given $\omega$ as in the proposition, for each $a\in A$, the function $x\mapsto \omega\big(||x-a||\big)$ has modulus of continuity $\omega$, and this follows by a straightforward computation.

\medskip

Theorem~\ref{DefKirsz} for Lipschitz maps with convex domain is also easy to show:

\begin{proposition}\label{prop:Kirszbraun convex}
Suppose $A$ is convex, and $f$ is uniformly continuous \textup{(}$L$-Lip\-schitz, where $L\in R^{\geq 0}$\textup{)}. Then  there exists a definable map $F\colon R^m\to \cl(B)$ with $F|A=f$ which is   uniformly continuous \textup{(}$L$-Lipschitz, respectively\textup{)}. If $f$ is convex, then $F$ can additionally be chosen to be convex.
\end{proposition}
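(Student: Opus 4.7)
The plan is to handle the uniform continuity / Lipschitz assertion via the metric projection onto $\cl(A)$, and the convexity assertion via infimal convolution with a multiple of the norm.

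For the first assertion, observe that $\cl(A)$ is again convex, and by Lemma~\ref{lem:extension to closure} $f$ extends uniquely to a continuous definable map $\bar f\colon \cl(A)\to R^n$; this extension is again $L$-Lipschitz (resp.\ uniformly continuous) with essentially the same modulus, and continuity forces $\bar f(\cl(A))\subseteq \cl(\bar f(A))=\cl(B)$. By Corollaries~\ref{cor:nearest point} and \ref{cor:non-expansive}, the metric projection $p:=p(\,\cdot\,,\cl(A))\colon R^m\to \cl(A)$ is definable and non-expansive. Setting $F:=\bar f\circ p$ produces a definable map $R^m\to\cl(B)$ with $F|_A=f$ (because $p$ is the identity on $\cl(A)\supseteq A$); $F$ is $L$-Lipschitz (resp.\ uniformly continuous) as the composition of such a map with a non-expansive map.

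For the convexity assertion, interpreted in the case $n=1$, I give a separate construction. Let $\tilde f\colon R^m\to R_\infty$ be the extension of $f$ by $+\infty$ outside $A$; convexity of both $A$ and $f$ makes $\tilde f$ convex. The function $h\colon R^m\to R$, $h(x):=L\norm{x}$, is convex and $L$-Lipschitz, so setting
\[
F(x) := (\tilde f\boxempty h)(x) = \inf_{a\in A}\bigl(f(a)+L\norm{x-a}\bigr)
\]
yields a definable convex function (by the inf-convolution discussion in Section~\ref{sec:convex analysis}). For $x\in A$, the choice $a=x$ gives $F(x)\le f(x)$, while the $L$-Lipschitz hypothesis $f(x)\le f(a)+L\norm{x-a}$ for all $a\in A$ yields $F(x)\ge f(x)$, so $F|_A=f$; in particular $F$ is finite at one point of $R^m$, so Lemma~\ref{lem:inf and sup of Lipschitz functions} applied to the definable family $\{x\mapsto f(a)+L\norm{x-a}\}_{a\in A}$ of $L$-Lipschitz functions shows $F$ is finite everywhere and $L$-Lipschitz.

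The main conceptual obstacle is that these two parts of the conclusion cannot in general be witnessed by a single extension: the projection composition $\bar f\circ p$ typically fails to be convex, while a non-constant convex function on $R^m$ is necessarily unbounded and hence cannot map into $\cl(B)$. The statement must therefore be read as asserting the existence of two extensions, one addressing each property, and the proof accordingly offers the two constructions above.
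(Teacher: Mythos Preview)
Your argument for the uniform continuity / Lipschitz assertion is exactly the paper's: extend to $\cl(A)$ by Lemma~\ref{lem:extension to closure}, then compose with the non-expansive projection $p(\,\cdot\,,\cl(A))$. The paper packages the second step as a separate lemma (Lemma~\ref{lem:uniform extension convex}) recording that $\omega_F=\omega_f$, but the content is identical.

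For the convexity clause the paper in fact gives no argument at all; the line ``immediate consequence of Lemma~\ref{lem:extension to closure} and the following lemma'' covers only the modulus-of-continuity statement, and as you correctly observe, $f\circ p$ need not be convex. Your infimal-convolution construction $F=\tilde f\boxempty(L\norm{\cdot})$ is a clean way to fill this gap: convexity follows from the general facts about $\boxempty$ in Section~\ref{sec:convex}, and the verification that $F|_A=f$ and that $F$ is finite and $L$-Lipschitz via Lemma~\ref{lem:inf and sup of Lipschitz functions} is exactly right. (This is essentially the McShane--Whitney formula of Proposition~\ref{prop:McShane-Whitney}, reinterpreted as an inf-convolution to make convexity transparent.)

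Your final paragraph identifies a genuine infelicity in the statement: a non-constant convex function on $R^m$ cannot have bounded range, so the convex extension cannot in general be required to land in $\cl(B)$. Reading the proposition as offering two separate extensions, as you do, is the only coherent interpretation, and your proof supplies both.
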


This is an immediate consequence of Lemma~\ref{lem:extension to closure} and the following lemma:

\begin{lemma}\label{lem:uniform extension convex}
Suppose $A$ is closed and convex. Then there exists a definable map $F\colon R^m\to B$ with $F|A=f$ and $\omega_f=\omega_F$. 
\end{lemma}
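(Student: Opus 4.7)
The plan is to use the metric projection onto $A$ to produce the extension. Since $A$ is a non-empty closed convex definable subset of $R^m$ and $\mathfrak R$ is definably complete, Section~\ref{sec:convex} provides the definable nearest-point projection $p_A := p(\,\cdot\,,A)\colon R^m\to A$, which by Corollary~\ref{cor:non-expansive} is (firmly) non-expansive and satisfies $p_A(x)=x$ for $x\in A$. I define
\[
F := f\circ p_A\colon R^m\to B.
\]
Then $F$ is definable as a composition of definable maps, takes values in $B$ since $p_A(R^m)\subseteq A$ and $f(A)\subseteq B$, and restricts to $f$ on $A$.

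It remains to verify $\omega_F=\omega_f$. The inequality $\omega_f\leq\omega_F$ is immediate from the fact that $F$ extends $f$, since the supremum defining $\omega_F(t)$ is taken over a superset of the pairs used to define $\omega_f(t)$. For the reverse inequality, given $x,y\in R^m$ with $\norm{x-y}\leq t$, non-expansiveness of $p_A$ yields $\norm{p_A(x)-p_A(y)}\leq \norm{x-y}\leq t$, so since $\omega_f$ is increasing,
\[
\norm{F(x)-F(y)} = \norm{f(p_A(x))-f(p_A(y))}\leq \omega_f\bigl(\norm{p_A(x)-p_A(y)}\bigr)\leq \omega_f(t).
\]
Taking the supremum over all such pairs $x,y$ gives $\omega_F(t)\leq\omega_f(t)$, and hence $\omega_F=\omega_f$.

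There is essentially no obstacle: the lemma is a one-line consequence of the existence, uniqueness, and non-expansiveness of the metric projection onto a non-empty closed convex definable set, all of which are already established in Section~\ref{sec:convex}. The only point to be mildly careful about is invoking monotonicity of $\omega_f$ (which is built into its definition) together with the fact that $\omega_F$ automatically dominates $\omega_f$ because $F$ extends $f$.
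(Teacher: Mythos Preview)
Your proof is correct and essentially identical to the paper's own argument: both define $F=f\circ p(\,\cdot\,,A)$ and use non-expansiveness of the metric projection (Corollary~\ref{cor:non-expansive}) to bound $\omega_F$ by $\omega_f$, with the reverse inequality immediate since $F$ extends $f$.
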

\begin{proof}
For $x\in R^m$ put $F(x):=f(p(x,A))$. Then the map $F\colon R^m\to B$ agrees with $f$ on $A$. Moreover, let $\delta>0$. Then for $x_1,x_2\in R^m$ with $\norm{x_1-x_2}\leq\delta$, setting $y_i=p(x_i,A)$ for $i=1,2$, we have $\norm{y_1-y_2}\leq\delta$ by Corollary~\ref{cor:non-expansive} and hence $\norm{F(x_1)-F(x_2)}=\norm{f(y_1)-f(y_2)}\leq\omega_f(\delta)$. This yields $\omega_F(\delta)\leq \omega_f(\delta)$, and the inequality $\omega_F(\delta)\geq \omega_f(\delta)$ is immediate. 
\end{proof}

\subsection{Monotone set-valued maps}
The crucial technique in proving Theorem \ref{thm:Kirszbraun} is to transfer the extension problem to definable monotone set-valued maps. As we will prove, these maps stay in one-to-one correspondence with firmly non-expansive maps. 
(See \cite{Phelps} for a useful survey on the theory of monotone set-valued maps in the context of Banach spaces.)

We begin by introducing (definable) set-valued maps as an alternative language for talking about (definable) families of sets. 
We use the notation $T\colon R^m\rightrightarrows R^n$ to denote a map $T\colon R^m\to 2^{R^n}$, and call such $T$ a {\bf set-valued map.} Such a set-valued map $T$ is {\bf trivial} if $T(x)=\emptyset$ for all $x\in R^m$.
The {\bf inverse} of a set-valued map $T\colon R^m\rightrightarrows R^n$ is the set-valued map $T^{-1}\colon R^n\rightrightarrows R^m$ given by
$$T^{-1}(x^*)=\big\{x\in R^m: x^*\in T(x)\big\}\qquad\text{for $x^*\in R^n$.}$$
Given set-valued maps $S,T\colon R^m\rightrightarrows R^n$ and $\lambda\in R$, the set-valued maps $S+T,\lambda\, S\colon R^m\rightrightarrows R^n$ are defined by
$(S+T)(x) = S(x)+T(x)$ and $(\lambda\, S)(x)=\lambda\,S(x)$ for $x\in R^m$.

Let $\mathcal T=(T_x)_{x\in X}$ be a family of subsets of $R^n$, where $X\subseteq R^m$. Then $\mathcal T$ gives rise to a set-valued map $T\colon R^m\rightrightarrows R^n$ by setting $T(x):=T_x$ for $x\in X$ and $T(x):=\emptyset$ for $x\in R^m\setminus X$. A set-valued map $R^m\rightrightarrows R^n$ arising in this way from a definable family $\mathcal T=(T_x)_{x\in X}$ of subsets of $R^n$ with $X\subseteq R^m$ is said to be {\bf definable.}

Let $T\colon R^m\rightrightarrows R^n$ be a set-valued map.
The {\bf graph} of $T$ is the subset $$\graph(T):=\big\{(x,x^*)\in R^m\times R^n: x^*\in T(x)\big\}$$
of $R^m\times R^n$. 
Note that every map $f\colon X\to R^n$, $X\subseteq R^m$, gives rise to a set-valued map $R^m\rightrightarrows R^n$, whose graph is the graph of the map $f$. We continue to denote the set-valued map associated to $f$ by the same symbol.
Given $S\colon R^m\rightrightarrows R^n$, we say that {\bf $T$ extends $S$} if $\graph(S)\subseteq \graph(T)$, and we say that {\bf $T$ properly extends $S$} if $\graph(S)\subsetneq \graph(T)$.

\begin{definition}
Let $T\colon R^n\rightrightarrows R^n$. 
An element $(x,x^*)\in R^n\times R^n$ is said to be {\bf monotonically related} to $T$ if 
\[
\langle x-y,x^*-y^*\rangle\geq 0 \qquad\text{for all $(y,y^*)\in \graph(T)$.}\] 
We say that $T$ is {\bf monotone} if every $(x,x^*)\in\graph(T)$ is monotonically related to $T$, and $T$ is called {\bf maximal monotone} if $T$ is monotone, and no $(x,x^*)\notin\graph(T)$ is monotonically related to $T$. (Equivalently, $T$ is maximal monotone if $T$ is monotone but every proper extension of $T$ fails to be monotone). 
\end{definition}

Clearly $T$ is monotone (maximal monotone) if and only if $T^{-1}$ is monotone (maximal monotone, respectively). It is easy to show that if $T\colon R^n\rightrightarrows R^n$ is maximal monotone, then $T(x)$ is a convex subset of $R^n$, for each $x\in R^n$.

\begin{example}
Let $f\colon X\to R^n$, where $X\subseteq R^n$. If $f$ is firmly non-expansive, then 
(the set-valued map associated to) $f$ is monotone. If $n=1$, then $f$ is monotone if and only if the function $f$ is increasing: $x\leq y\Rightarrow f(x)\leq f(y)$, for all $x,y\in X$.
\end{example}
\begin{example}
Let $T\colon R^n\to R^n$ be $R$-linear. Then $T$ is monotone if and only if $T$ is positive (i.e., $\langle T(x),x\rangle\geq 0$ for all $x\in R^n$), and in this case, $T$ is maximal monotone. (See \cite[Example~1.5~(b)]{Phelps}.)
\end{example}

Our interest in definable set-valued maps is motivated by the following fact; compare with \cite{Eckstein}.
Its proof makes crucial use of Theorem~\ref{thm:Helly} (the definable version of Helly's Theorem).
\begin{proposition}\label{maxmono}
Let $T\colon R^n\rightrightarrows R^n$, and let $f:=(T+\id)^{-1}$.
Then
\begin{enumerate}
\item $T$ is monotone if and only if $f$ is \textup{(}the set-valued map corresponding to\textup{)} a firmly non-expansive map $X\to R^n$, for some $X\subseteq R^n$;
\item if $f$ is a firmly non-expansive map $R^n\to R^n$, then $T$ is maximal monotone;
\item if $T$ is definable and maximal monotone, then $f$ is a firmly non-expansive map $R^n\to R^n$.
\end{enumerate}
\end{proposition}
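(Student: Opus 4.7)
The plan is to dispatch the three parts in order, with parts (1) and (2) being direct algebraic computations and part (3) using Corollary~\ref{cor:extend non-expansive} together with the remark following it (which is where the definable Helly theorem enters).

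For (1), I would unwind the definition: $(y_i, x_i) \in \graph(f)$ means $x_i^* := y_i - x_i \in T(x_i)$, so setting $x_i^* = y_i - x_i$ and computing
\[
\langle x_1 - x_2, x_1^* - x_2^* \rangle = \langle x_1 - x_2, y_1 - y_2\rangle - \|x_1 - x_2\|^2,
\]
monotonicity of $T$ becomes exactly $\|x_1 - x_2\|^2 \leq \langle x_1 - x_2, y_1 - y_2\rangle$. Taking $y_1 = y_2$ shows $f$ is single-valued, and what remains is precisely firm non-expansiveness. The converse is the same computation read backwards.

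For (2), let $(x, x^*)$ be monotonically related to $T$, and set $y := x + x^*$. Since $f\colon R^n \to R^n$ is defined everywhere, let $z := f(y)$, so that $y - z \in T(z)$, i.e.\ $(z, y - z) \in \graph(T)$. The monotonic relation gives
\[
0 \leq \langle x - z,\, x^* - (y - z)\rangle = \langle x - z,\, -(x - z)\rangle = -\|x - z\|^2,
\]
forcing $x = z$ and hence $x^* = y - x \in T(x)$, so $(x, x^*) \in \graph(T)$.

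For (3), since $T$ is definable so is $f = (T + \id)^{-1}$, and by (1), $f$ is a firmly non-expansive (definable) map from its domain $X \subseteq R^n$ into $R^n$. The task is to show $X = R^n$. Fix $y \in R^n$; by Corollary~\ref{cor:extend non-expansive} and the remark following it, there exists $x \in R^n$ such that $f \cup \{(y, x)\}$ is firmly non-expansive. Setting $x^* := y - x$, I claim $(x, x^*)$ is monotonically related to $T$: for any $(z, z^*) \in \graph(T)$ we have $f(z + z^*) = z$, and firm non-expansiveness of the one-point extension at $(y, x)$ yields
\[
\|x - z\|^2 \leq \langle x - z,\, y - (z + z^*)\rangle,
\]
which, using $y - x = x^*$, rearranges to $\langle x - z, x^* - z^*\rangle \geq 0$. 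By maximality of $T$, $(x, x^*) \in \graph(T)$, hence $y = x + x^* \in X$ and $f(y) = x$. The main obstacle — and the only step that uses more than bookkeeping — is the invocation of Corollary~\ref{cor:extend non-expansive}, which rests on the definable Helly theorem; the rest is essentially the classical Minty-type correspondence transcribed to the definable setting.
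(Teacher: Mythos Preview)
Your proof is correct, and parts (1) and (3) follow essentially the same route as the paper's (the paper phrases (3) as a contrapositive---assume $X\neq R^n$, extend $f$ by one point, contradict maximality---but the content is identical).

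For (2) you take a genuinely different, though equally short, path. The paper argues structurally: any monotone extension $S\supseteq T$ yields via (1) a firmly non-expansive map $(S+\id)^{-1}$ extending $f$, but $f$ is already single-valued and defined on all of $R^n$, so $(S+\id)^{-1}=f$ and hence $S=T$. You instead take an arbitrary $(x,x^*)$ monotonically related to $T$, evaluate $z=f(x+x^*)$, and force $x=z$ by a one-line computation. Your argument avoids re-invoking (1) and is slightly more self-contained; the paper's version makes the underlying reason (no room left to extend an everywhere-defined map) more visible.

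One cosmetic point in (3): Corollary~\ref{cor:extend non-expansive} is stated for points outside the domain, so strictly you should remark that if $y\in X$ already, one simply takes $x=f(y)$.
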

\begin{proof}
We first note that the linear map $(x,x^*)\mapsto (x+x^*,x)$ restricts to a bijection $\graph(T)\to\graph(f)$ with inverse $(y,y^*)\mapsto (y^*,y-y^*)$. So if $T$ is monotone and $(x,x_i^*)\in\graph(f)$, where $i=1,2$, then $(x_i^*,x-x_i^*)\in\graph(T)$ and hence $$0\leq \langle x_1^*-x_2^*, (x-x_1^*)-(x-x_2^*)\rangle=-\norm{x_1^*-x_2^*}^2$$ by monotonicity of $T$, so $x_1^*=x_2^*$. Hence $f$ is the set-valued map corresponding to a map $X\to R^n$, where $X\subseteq R^n$.
Now (1) is a consequence of this observation and the following identity, valid for all $(x,x^*), (y,y^*)\in \graph(T)$:
\begin{gather*}
\big\langle f(x+x^*)-f(y+y^*), (x+x^*)-(y+y^*)\big\rangle-\norm{f(x+x^*)-f(y+y^*)}^2\\
=\big\langle x-y,(x+x^*)-(y+y^*)\big\rangle-\norm{x-y}^2
=\langle x-y,x^*-y^*\rangle.
\end{gather*}
For (2), suppose $f$ is  a firmly non-expansive map $R^n\to R^n$, and $S\colon R^n\rightrightarrows R^n$ is a monotone set-valued map extending $T$. Then $(S+\id)^{-1}$ is a set-valued map corresponding to a map (by (1)) which extends $f=(T+\id)^{-1}$, hence $S=T$. 
For  (3), suppose that $T$ is definable and monotone, and $X\neq R^n$.  Let $x\in R^n\setminus X$. 
Then  $f$ extends to a firmly non-expansive map $X\cup\{x\}\to R^n$ by Corollary~\ref{cor:extend non-expansive} and the remark following it. 
Hence $T$ can be properly extended to a monotone set-valued map $R^n\rightrightarrows R^n$, so $T$ is not maximal.
\end{proof}

Let $f\colon R^n\times R^n\to R_\infty$. The set-valued map $T\colon R^n\rightrightarrows R^n$ with
$$\graph(T)=\big\{(x,x^*)\in R^n\times R^n: f(x,x^*)=\langle x,x^*\rangle\big\}$$
is called the set-valued map {\bf represented by $f$.}
If $f$ is definable proper convex and autoconjugate, then
the Fenchel-Young Inequality implies $f(x,x^*)\geq \langle x,x^*\rangle$ and $f^*(x,x^*)\geq \langle x,x^*\rangle$ for all $x,x^*\in R^n$. Together with the next proposition (due to \cite{SZ} in the classical case), this yields that
autoconjugate functions represent maximal monotone maps:

\begin{proposition}\label{prop:SZ}
Let $f\colon R^n\times R^n\to R_\infty$ be definable proper convex, and let $T\colon R^n\rightrightarrows R^n$ be the set-valued map represented by $f$.
If $f(x,x^*)\geq\langle x,x^*\rangle$ for all $x,x^*\in R^n$, then $T$ is monotone, and if in addition $f^*(x,x^*)\geq\langle x,x^*\rangle$ for all $x,x^*\in R^n$, then $T$ is maximal monotone.
\end{proposition}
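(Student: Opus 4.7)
My plan is to treat the two assertions separately.

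For monotonicity, take $(x,x^*)$ and $(y,y^*)$ in $\graph(T)$, so $f(x,x^*)=\langle x,x^*\rangle$ and $f(y,y^*)=\langle y,y^*\rangle$. For $\lambda\in(0,1)$, the convexity of $f$ together with the hypothesis $f\geq\langle\cdot,\cdot\rangle$ evaluated at the convex combination $\lambda(x,x^*)+(1-\lambda)(y,y^*)$ give
\[
\lambda\langle x,x^*\rangle+(1-\lambda)\langle y,y^*\rangle \;\geq\; \bigl\langle\lambda x+(1-\lambda)y,\,\lambda x^*+(1-\lambda)y^*\bigr\rangle.
\]
Expanding the right-hand side bilinearly, the $\lambda^2$ and $(1-\lambda)^2$ contributions cancel against the left, leaving $\lambda(1-\lambda)\langle x-y,x^*-y^*\rangle\geq 0$, and division by the positive factor $\lambda(1-\lambda)$ yields monotonicity.

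For maximality, assume in addition $f^*\geq\langle\cdot,\cdot\rangle$, and let $(x_0,x_0^*)$ be monotonically related to $T$. Fenchel--Young at $((x_0,x_0^*),(x_0^*,x_0))$ gives $f(x_0,x_0^*)+f^*(x_0^*,x_0)\geq 2\langle x_0,x_0^*\rangle$, while the two standing lower bounds $f(x_0,x_0^*)\geq\langle x_0,x_0^*\rangle$ and $f^*(x_0^*,x_0)\geq\langle x_0,x_0^*\rangle$ imply that $f(x_0,x_0^*)=\langle x_0,x_0^*\rangle$ as soon as we establish the reverse sum inequality. Using Lemma~\ref{lem:conjugates}(3), I translate to the proper convex function
\[
\tilde f(y,y^*):=f(y+x_0,\,y^*+x_0^*)-\langle y,x_0^*\rangle-\langle y^*,x_0\rangle-\langle x_0,x_0^*\rangle,
\]
which still satisfies $\tilde f\geq\langle\cdot,\cdot\rangle$ and $\tilde f^*\geq\langle\cdot,\cdot\rangle$ by direct computation, has $(0,0)$ monotonically related to the set it represents, and for which the desired inequality reduces to $\tilde f(0,0)\leq 0$. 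Applying Fenchel--Rockafellar duality (Proposition~\ref{prop:duality}) to the pair consisting of $\tilde f$ and the zero function yields
\[
\inf_{(y,y^*)\in R^n\times R^n}\tilde f(y,y^*) \;=\; -\tilde f^*(0,0) \;\leq\; 0,
\]
since $0^*$ is $0$ at the origin and $-\infty$ elsewhere and $\tilde f^*(0,0)\geq 0$ by hypothesis, with the dual maximum attained at the origin.

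The main obstacle is to convert the asymptotic bound $\inf\tilde f\leq 0$ into the pointwise bound $\tilde f(0,0)\leq 0$: on its own, $\tilde f^*\geq\langle\cdot,\cdot\rangle$ only controls the infimum, and to localize the minimizer at the origin one must bring in the monotonic relation. Following Simons--Zalinescu, I would introduce a strictly convex coercive perturbation $\tilde f_\varepsilon(y,y^*):=\tilde f(y,y^*)+\tfrac{\varepsilon}{2}(\norm{y}^2+\norm{y^*}^2)$ (renormalized as needed so that coercivity holds, using $\tilde f(y,y^*)\geq\langle y,y^*\rangle\geq-\tfrac12(\norm{y}^2+\norm{y^*}^2)$); Corollary~\ref{cor:minmax} produces a unique minimizer $(y_\varepsilon,y_\varepsilon^*)$, and the Fenchel--Young equality at the optimality condition $-\varepsilon(y_\varepsilon,y_\varepsilon^*)\in\partial\tilde f(y_\varepsilon,y_\varepsilon^*)$ together with the two bounds $\tilde f\geq c$ and $\tilde f^*\geq c$ pins down both $\tilde f(y_\varepsilon,y_\varepsilon^*)$ and the size of $(y_\varepsilon,y_\varepsilon^*)$. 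The definable Bolzano--Weierstrass theorem (Proposition~\ref{prop:BW}) then extracts a definable subsequence-map along which $(y_\varepsilon,y_\varepsilon^*)$ converges as $\varepsilon\to 0^+$, and the monotonic relation of $(0,0)$ to $\tilde T$ forces the limit to be $(0,0)$, giving $\tilde f(0,0)\leq 0$ and completing the proof. The delicate points are the uniform boundedness of $(y_\varepsilon,y_\varepsilon^*)$ and the identification of its limit with the origin, both of which use the two hypotheses in tandem.
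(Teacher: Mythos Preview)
Your monotonicity argument is correct and essentially the same as the paper's (which simply takes $\lambda=\tfrac12$).

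For maximality, there is a genuine gap. The step ``apply Proposition~\ref{prop:duality} to $\tilde f$ and the zero function'' is correct but vacuous: with $g\equiv 0$ the duality statement collapses to the identity $\inf\tilde f=-\tilde f^*(0,0)$, and then $\tilde f^*(0,0)\geq 0$ is nothing but your hypothesis $\tilde f^*\geq c$ evaluated at the origin. So duality has contributed nothing; you still face the full problem of passing from $\inf\tilde f\leq 0$ to $\tilde f(0,0)\leq 0$. Your perturbation sketch does not close this gap. Since $\tilde f(y,y^*)\geq\langle y,y^*\rangle\geq -q(y,y^*)$, the function $\tilde f+\varepsilon q$ is coercive only for $\varepsilon>1$, so the perturbation is never small. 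Even granting a minimizer and the optimality condition $-\varepsilon(y_\varepsilon,y_\varepsilon^*)\in\partial\tilde f(y_\varepsilon,y_\varepsilon^*)$ (subdifferentials are not developed in the paper), combining Fenchel--Young with the two bounds $\tilde f\geq c$, $\tilde f^*\geq c$ gives
\[
(1+\varepsilon^2)\langle y_\varepsilon,y_\varepsilon^*\rangle+\varepsilon\big(\norm{y_\varepsilon}^2+\norm{y_\varepsilon^*}^2\big)\leq 0,
\]
which after $\langle y_\varepsilon,y_\varepsilon^*\rangle\geq -\tfrac12(\norm{y_\varepsilon}^2+\norm{y_\varepsilon^*}^2)$ becomes $-\tfrac12(1-\varepsilon)^2(\norm{y_\varepsilon}^2+\norm{y_\varepsilon^*}^2)\leq 0$, a tautology giving no control on $\norm{(y_\varepsilon,y_\varepsilon^*)}$. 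Thus neither the uniform boundedness nor the identification of the limit with the origin is established.

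The paper's proof avoids all of this by choosing a non-trivial concave partner in the duality. In your translated coordinates the correct choice is $G(y,y^*)=-q(y,y^*)=\langle y,y^*\rangle-\Delta(-y,-y^*)$; in the paper's untranslated form this is $\langle x,x^*\rangle-\Delta(y-x,y^*-x^*)$. The crucial ingredient is Example~\ref{ex:Delta}: the convex function $\delta=-G$ satisfies $\delta^*(z,z^*)=\delta(-z^*,-z)$. Applying Proposition~\ref{prop:duality} to the pair $\big((f^*)^\trans,\,G\big)$ then directly produces a point $(x,x^*)$ with $f(x,x^*)=\langle x,x^*\rangle$ and $\Delta(y-x,y^*-x^*)=0$; the monotonic relation of $(y,y^*)$ to $T$, together with the expansion $\Delta(a,b)=\tfrac12\norm{a}^2+\langle a,b\rangle+\tfrac12\norm{b}^2$, forces $(x,x^*)=(y,y^*)$. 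No limiting argument is needed. The idea missing from your proposal is precisely this self-conjugacy of $\delta$, which turns the localization step into a one-line computation.
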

\begin{proof}
Suppose $f(x,x^*)\geq\langle x,x^*\rangle$ for all $x,x^*\in R^n$. Then for $(x,x^*),(y,y^*)\in\graph(T)$, using the convexity of $f$:
\begin{multline*}
\textstyle\frac{1}{2}\langle x,x^*\rangle + \frac{1}{2}\langle y,y^*\rangle =
\frac{1}{2}f(x,x^*) + \frac{1}{2}f(y,y^*) \geq \\ \textstyle f\left(\frac{1}{2}x+\frac{1}{2}y,\frac{1}{2}x^*+\frac{1}{2}y^*\right)\geq \left\langle \frac{1}{2}x+\frac{1}{2}y, \frac{1}{2}x^*+\frac{1}{2}y^*\right\rangle,
\end{multline*}
and this yields $\langle x-y, x^*-y^*\rangle\geq 0$.
Now assume $f^*(x,x^*)\geq\langle x,x^*\rangle$ for all $x,x^*\in R^n$, and let $(y,y^*)\in R^n\times R^n$ be monotonically related to $T$, i.e., $\langle y-x,y^*-x^*\rangle\geq 0$ for all $(x,x^*)\in\graph(T)$. From Example~\ref{ex:Delta} recall the notation $\Delta(x,y) = \frac{1}{2}\norm{x+y}^2$ for $x,y\in R^n$. By assumption and since $\Delta\geq 0$, with $g:=(f^*)^\trans$ we have
$$g(x,x^*)-\langle x,x^*\rangle+\Delta(y-x,y^*-x^*)\geq 0 \qquad\text{for all $(x,x^*)\in\graph(T_f)$.}$$
Hence by Proposition~\ref{prop:duality} and Example~\ref{ex:Delta} there exists $(x,x^*)\in R^n\times R^n$ such that
$$g^*(x^*,x)-\langle x,x^*\rangle+\Delta(y-x,y^*-x^*)\leq 0.$$
Since $g^*(x^*,x)=f^*(x,x^*)$ therefore 
$$\langle x,x^*\rangle\leq f^*(x,x^*)\leq \langle x,x^*\rangle-\Delta(y-x,y^*-x^*).$$
Hence $(x,x^*)\in\graph(T)$, thus $\langle y-x,y^*-x^*\rangle\geq 0$, and
$$\textstyle 0=\Delta(y-x,y^*-x^*)=\frac{1}{2}\norm{y-x}^2 + \langle y-x,y^*-x^*\rangle + \frac{1}{2}\norm{y^*-x^*}^2,$$
therefore $(y,y^*)=(x,x^*)\in\graph(T)$.
\end{proof}

\subsection{The Fitzpatrick function}
Let $T\colon R^n\rightrightarrows R^n$ be a non-trivial definable set-valued map.
The function $\Phi_T\colon R^n\times R^n\rightarrow R_\infty$ given by
$$\Phi_T(x,x^*):=\sup_{(a,a^*)\in \graph(T)} \big( \langle x,a^*\rangle + \langle a,x^*\rangle - \langle a,a^*\rangle\big)$$ 
is the {\bf Fitzpatrick function} of $T$. (This concept was introduced in \cite{Fitzpatrick}.) The function $\Phi_T$ is the pointwise supremum of a definable family of affine functions, hence $\Phi_T$ is definable and closed convex.
For $(x,x^*)\in R^n\times R^n$ we have
$$\Phi_T(x,x^*) =  
\langle x,x^*\rangle-\inf_{(a,a^*)\in \graph(T)}\langle x-a,x^*-a^*\rangle.$$
Hence if $T$ is monotone, then $\Phi_T(x,x^*)=\langle x,x^*\rangle$  for all $(x,x^*)\in\graph(T)$, in particular, $\Phi_T$ is proper; and if $T$ is maximal monotone, then $\Phi_T$ represents $T$.

{\it From now on until the end of this subsection we assume that $T$ is monotone.}\/
Then the set-valued map represented by $\Phi_T^*$ also extends $T$:

\begin{lemma}\label{lem:Phi conjugate}
For all $(y,y^*)\in\graph(T)$ we have
$\Phi_T^*(y^*,y)=\langle y^*,y\rangle$.
\end{lemma}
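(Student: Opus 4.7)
The plan is to establish the two inequalities $\Phi_T^*(y^*,y)\geq \langle y^*,y\rangle$ and $\Phi_T^*(y^*,y)\leq \langle y^*,y\rangle$ by direct manipulation of the definitions, using only the fact (already recorded in the paragraph immediately preceding the lemma) that monotonicity of $T$ gives $\Phi_T(y,y^*) = \langle y,y^*\rangle$ whenever $(y,y^*)\in\graph(T)$.

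For the lower bound, I would simply unfold the definition of the conjugate:
$$\Phi_T^*(y^*,y) = \sup_{(x,x^*)\in R^n\times R^n}\big(\langle x,y^*\rangle + \langle x^*,y\rangle - \Phi_T(x,x^*)\big),$$
and plug in the specific choice $(x,x^*) = (y,y^*)\in\graph(T)$. Since $\Phi_T(y,y^*)=\langle y,y^*\rangle$ by monotonicity, this particular choice contributes $\langle y,y^*\rangle + \langle y^*,y\rangle - \langle y,y^*\rangle = \langle y^*,y\rangle$, giving $\Phi_T^*(y^*,y)\geq \langle y^*,y\rangle$.

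For the upper bound, I would test the defining supremum of $\Phi_T$ itself against the single graph element $(a,a^*) := (y,y^*)\in\graph(T)$. For every $(x,x^*)\in R^n\times R^n$ this gives
$$\Phi_T(x,x^*) \geq \langle x,y^*\rangle + \langle y,x^*\rangle - \langle y,y^*\rangle,$$
which I rearrange as
$$\langle x,y^*\rangle + \langle x^*,y\rangle - \Phi_T(x,x^*) \leq \langle y,y^*\rangle.$$
Taking the supremum of the left-hand side over all $(x,x^*)$ yields $\Phi_T^*(y^*,y)\leq \langle y^*,y\rangle$, completing the proof.

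There is essentially no obstacle here: the statement is a formal consequence of the definitions plus the monotonicity identity $\Phi_T|_{\graph(T)} = \langle\,\cdot\,,\,\cdot\,\rangle$ noted just before the lemma. The content of the lemma is that the graph of a monotone $T$ sits inside the graph of the maximal monotone map represented by $\Phi_T^*$ (after transposing the variables), and this will presumably be used in the next subsection to extend $T$ via the proximal-average machinery of Section~\ref{sec:convex analysis}.
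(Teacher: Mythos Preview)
Your proof is correct and follows essentially the same approach as the paper: both arguments obtain the lower bound by evaluating at $(x,x^*)=(y,y^*)$ (the paper phrases this as Fenchel--Young) and the upper bound by testing the graph element $(a,a^*)=(y,y^*)$ in the supremum defining $\Phi_T$. The only difference is cosmetic: the paper first rewrites $\Phi_T^*$ as a sup--inf before choosing $\tilde a=\tilde y$ in the inner infimum, whereas you bound $\Phi_T$ from below directly; the substance is identical.
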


In the following we use tildes to denote elements of $R^{n}\times R^{n}$. Given $\tilde{x}\in R^{n}\times R^{n}$, we write $\tilde{x}=(x,x^*)$ where $x,x^*\in R^n$, and we put $\tilde{x}^\trans=(x^*,x)$. Below we will often use the identity
$$\textstyle\langle x,x^*\rangle = \frac{1}{2}\langle \tilde{x}, \tilde{x}^\trans\rangle\qquad (\tilde{x}=(x,x^*)\in R^n\times R^n).$$
For $\tilde{x}\in R^n\times R^n$ we have
$$ \Phi_T(\tilde{x})=\sup_{\tilde{a}\in\graph(T)}\textstyle \langle \tilde{x}, \tilde{a}^\trans\rangle - \frac{1}{2}\langle\tilde{a},\tilde{a}^\trans\rangle$$
and hence, for
$\tilde{y}\in R^n\times R^n$:
$$\Phi_T^*(\tilde{y})=\sup_{\tilde{x}} \big(\langle \tilde{x}, \tilde{y}\rangle - \Phi_T(\tilde{x})\big) = 
\sup_{\tilde{x}} \inf_{\tilde{a}\in\graph(T)}\textstyle\left( \langle \tilde{y} -\tilde{a}^\trans,\tilde{x} \rangle + \frac{1}{2}\langle \tilde{a}, \tilde{a}^\trans\rangle\right).$$

\begin{proof}[Proof of Lemma~\ref{lem:Phi conjugate}]
Let  $\tilde{y}\in \graph(T)$. Then $\Phi_T(\tilde{y})=\frac{1}{2}\langle\tilde{y}^\trans,\tilde{y}\rangle$, so the Fenchel-Young Inequality applied to $\Phi_T$ yields
$\Phi_T^*(\tilde{y}^\trans)\geq \frac{1}{2}\langle \tilde{y}^\trans,\tilde{y}\rangle$.
We also have
$$\Phi_T^*(\tilde{y}^\trans)= 
\sup_{\tilde{x}} \inf_{\tilde{a}\in\graph(T)}{\textstyle\left( \langle \tilde{y}^\trans -\tilde{a}^\trans,\tilde{x} \rangle + \frac{1}{2}\langle \tilde{a}, \tilde{a}^\trans\rangle\right)}\leq \sup_{\tilde{x}} \textstyle\frac{1}{2}\langle \tilde{y},\tilde{y}^\trans\rangle=\frac{1}{2}\langle \tilde{y},\tilde{y}^\trans\rangle.$$
Hence, $\Phi_T^*(\tilde{y}^\trans)= \frac{1}{2}\langle \tilde{y}^\trans,\tilde{y}\rangle$.
\end{proof}

Let $\Psi_T\colon R^n\times R^n\rightarrow R$ be the proximal average of $\Phi_T$ and $\Phi_T^*$, that is,
\[ 
\Psi_T(\tilde{x})=\inf_{\tilde{y}+\tilde{z}=2\tilde{x}}\textstyle\left(\frac{1}{2}\Phi_T(\tilde{y})+\frac{1}{2}\Phi_T^{*}(\tilde{z}^\trans)+\frac{1}{4}\kappa(\tilde {y},\tilde{z})\right)\quad\text{for $\tilde{x}\in R^n\times R^n$.}
\]
By Proposition~\ref{prop:autoconjugate}, the definable function $\Psi_T$ is proper convex and autoconjugate.

\begin{lemma}
Let $\tilde{x}\in\graph(T)$. Then $\Phi_T(\tilde{x})=\Psi_T(\tilde{x})$.
\end{lemma}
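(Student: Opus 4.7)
The plan is to establish the two inequalities $\Psi_T(\tilde{x})\leq\Phi_T(\tilde{x})$ and $\Psi_T(\tilde{x})\geq\Phi_T(\tilde{x})$ separately, exploiting very different properties of $\Psi_T$.

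For the upper bound, I would simply evaluate the infimum defining $\Psi_T(\tilde{x})$ at the diagonal choice $\tilde{y}=\tilde{z}=\tilde{x}$ (which satisfies $\tilde{y}+\tilde{z}=2\tilde{x}$). Since $T$ is monotone and $\tilde{x}=(x,x^*)\in\graph(T)$, the Fitzpatrick function satisfies $\Phi_T(\tilde{x})=\langle x,x^*\rangle=\frac{1}{2}\langle\tilde{x},\tilde{x}^\trans\rangle$, and by Lemma~\ref{lem:Phi conjugate} we have $\Phi_T^*(\tilde{x}^\trans)=\langle x,x^*\rangle$ as well. Finally $\kappa(\tilde{x},\tilde{x})=\frac{1}{2}\|\tilde{x}-\tilde{x}\|^2=0$. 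Adding up with the prescribed weights yields
\[
\Psi_T(\tilde{x})\leq \tfrac{1}{2}\Phi_T(\tilde{x})+\tfrac{1}{2}\Phi_T^*(\tilde{x}^\trans)+\tfrac{1}{4}\kappa(\tilde{x},\tilde{x})=\langle x,x^*\rangle=\Phi_T(\tilde{x}).
\]

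For the lower bound, I would invoke autoconjugacy. By Proposition~\ref{prop:autoconjugate}, $\Psi_T$ is autoconjugate, i.e., $\Psi_T^*=\Psi_T^\trans$. Hence
\[
\Psi_T^*(\tilde{x}^\trans)=\Psi_T^\trans(\tilde{x}^\trans)=\Psi_T(\tilde{x}).
\]
The Fenchel-Young Inequality applied to $\Psi_T$ at the pair $(\tilde{x},\tilde{x}^\trans)$ then gives
\[
2\,\Psi_T(\tilde{x})=\Psi_T(\tilde{x})+\Psi_T^*(\tilde{x}^\trans)\geq \langle\tilde{x},\tilde{x}^\trans\rangle=2\langle x,x^*\rangle=2\,\Phi_T(\tilde{x}),
\]
so $\Psi_T(\tilde{x})\geq\Phi_T(\tilde{x})$. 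Combining the two inequalities completes the proof.

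I do not anticipate a serious obstacle: both directions follow essentially by unpacking definitions, once one remembers that $\kappa$ vanishes on the diagonal and that autoconjugacy immediately converts Fenchel-Young into a lower bound of the form $\Psi_T\geq\frac{1}{2}\langle\cdot,\cdot^\trans\rangle$. The only mildly subtle point is the identification $\Phi_T^*(\tilde{x}^\trans)=\langle x,x^*\rangle$ for $\tilde{x}\in\graph(T)$, but this is exactly what Lemma~\ref{lem:Phi conjugate} supplies.
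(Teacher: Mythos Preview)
Your proof is correct and follows essentially the same approach as the paper's: the lower bound comes from autoconjugacy combined with Fenchel--Young, and the upper bound comes from evaluating the infimum defining $\Psi_T$ at $\tilde{y}=\tilde{z}=\tilde{x}$ (the paper writes this step more tersely as $2\Psi_T(\tilde{x})\leq \Phi_T(\tilde{x})+\Phi_T^*(\tilde{x}^\trans)$, but it is the same computation). Your version is slightly more explicit about why $\kappa$ vanishes and about the role of Lemma~\ref{lem:Phi conjugate}, but there is no substantive difference.
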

\begin{proof}
By the Fenchel-Young Inequality, we have on the one hand
$$2\Psi_T(\tilde{x})=\Psi_T^{*\trans}(\tilde{x})+\Psi_T(\tilde{x})\geq \langle\tilde{x},\tilde{x}^\trans\rangle.$$ On the other hand
$$2\Psi_T(\tilde{x})\leq \Phi_T(\tilde{x})+\Phi_T^*(\tilde{x}^\trans)=\langle\tilde{x},\tilde{x}^\trans\rangle.$$
So $\Psi_T(\tilde{x})=\frac{1}{2}\langle \tilde{x},\tilde{x}^\trans\rangle=\Phi_T(\tilde{x})$.
\end{proof}

By Proposition~\ref{prop:SZ} and the previous lemma,
we have the following adaptation of \cite[Theorem~5.7]{Bauschke2}:
\begin{proposition}\label{prop:extensionoperator}
The set-valued map $\overline{T}\colon R^n\rightrightarrows R^n$ represented by $\Psi_T$ is a definable maximal monotone extension of $T$.
\end{proposition}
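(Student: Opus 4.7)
My plan is to verify the three assertions in turn, namely that $\overline{T}$ is definable, that $\overline{T}$ extends $T$, and that $\overline{T}$ is maximal monotone. Definability is automatic: the Fitzpatrick function $\Phi_T$, its conjugate $\Phi_T^*$, and hence the proximal average $\Psi_T$ are obtained from $T$ by definable operations (suprema/infima over definable families, linear transformations, conjugation), so $\Psi_T$ is definable. Consequently $\graph(\overline{T})=\{\tilde{x}\in R^n\times R^n:\Psi_T(\tilde{x})=\tfrac{1}{2}\langle\tilde{x},\tilde{x}^\trans\rangle\}$ is a definable subset of $R^n\times R^n$.

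To see that $\overline{T}$ extends $T$, let $\tilde{x}=(x,x^*)\in\graph(T)$. By the lemma preceding the proposition, $\Psi_T(\tilde{x})=\Phi_T(\tilde{x})$, and since $T$ is monotone this equals $\langle x,x^*\rangle$ by the remark following the definition of $\Phi_T$ (where it was observed that $\Phi_T(\tilde x)=\langle x,x^*\rangle$ on $\graph(T)$ when $T$ is monotone). Hence $\tilde{x}\in\graph(\overline{T})$.

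Maximality of $\overline{T}$ as a monotone set-valued map is the substantive point, but it is now an immediate application of Proposition~\ref{prop:SZ} together with Proposition~\ref{prop:autoconjugate}. From the latter, $\Psi_T$ is proper convex and autoconjugate, so $\Psi_T^*=\Psi_T^\trans$. To apply Proposition~\ref{prop:SZ}, I must verify that both $\Psi_T(\tilde{x})\geq\langle x,x^*\rangle$ and $\Psi_T^*(\tilde{x})\geq\langle x,x^*\rangle$ hold for all $\tilde{x}=(x,x^*)\in R^n\times R^n$. For the first, apply the Fenchel--Young Inequality to $\Psi_T$ at $(\tilde{x},\tilde{x}^\trans)$:
\[
\Psi_T(\tilde{x})+\Psi_T^*(\tilde{x}^\trans)\geq\langle\tilde{x},\tilde{x}^\trans\rangle=2\langle x,x^*\rangle;
\]
since $\Psi_T^*(\tilde{x}^\trans)=\Psi_T^\trans(\tilde{x}^\trans)=\Psi_T(\tilde{x})$ by autoconjugacy, this gives $\Psi_T(\tilde{x})\geq\langle x,x^*\rangle$. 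For the second, the same argument applied to $\tilde{x}^\trans=(x^*,x)$ in place of $\tilde{x}$ yields $\Psi_T(\tilde{x}^\trans)\geq\langle x^*,x\rangle=\langle x,x^*\rangle$, and $\Psi_T^*(\tilde{x})=\Psi_T(\tilde{x}^\trans)$ by autoconjugacy. With both inequalities in hand, Proposition~\ref{prop:SZ} concludes that the set-valued map represented by $\Psi_T$, namely $\overline{T}$, is maximal monotone.

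There is no serious obstacle in this plan; the entire argument is a bookkeeping exercise on top of the preparatory lemmas. The only point requiring a little care is to remember that autoconjugacy of $\Psi_T$ reduces the two hypotheses of Proposition~\ref{prop:SZ} to a single application of Fenchel--Young, used once for $\tilde{x}$ and once for $\tilde{x}^\trans$; the rest is definitional.
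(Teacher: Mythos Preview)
Your proposal is correct and follows essentially the same approach as the paper: the paper's proof is the single line ``By Proposition~\ref{prop:SZ} and the previous lemma,'' together with the remark immediately preceding Proposition~\ref{prop:SZ} that autoconjugacy plus Fenchel--Young yields both inequalities $f\geq\langle\cdot,\cdot\rangle$ and $f^*\geq\langle\cdot,\cdot\rangle$. You have simply unpacked these citations explicitly, and all the details you give are right.
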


%
%
%
%

We are now able to prove the definable version of the Kirszbraun Theorem.

\subsection{Proof of Theorem~\ref{DefKirsz}}
Let $A\subseteq R^m$ be non-empty, and let $f\colon A\to R^n$ be a definable $L$-Lipschitz function, where $L\in R^{>0}$. If $m<n$, then let $f_1\colon A\times R^{n-m}\to R^n$ be given by $f_1(x_1,\dots,x_n):=f(x_1,\dots,x_m)$.
If $n\leq m$, then set $f_1(x):=(f(x),0,\dots,0)\in R^m$. Note that $f$ extends to a definable $L$-Lipschitz map $R^m\to R^n$ if and only if $f_1$  extends to a definable $L$-Lipschitz map $R^k\to R^k$, where $k=\max\{m,n\}$. So after replacing $f$ by $f_1$, we may assume that $m=n$. Replacing $f$ by $f/L$, we may also assume that $f$ is non-expansive. By Proposition~\ref{prop44} the definable map $g:=\frac{1}{2}(\id + f)$ is firmly non-expansive, and it suffices to show that $g$ admits an extension to a definable firmly non-expansive map $R^n\to R^n$.
The definable set-valued map $T:=g^{-1}-\id\colon R^n\rightrightarrows R^n$ is monotone by Proposition~\ref{maxmono},~(1).
By Proposition~\ref{prop:extensionoperator} there is a definable maximal monotone $\overline{T}\colon R^n\rightrightarrows R^n$ extending $T$. By Proposition~\ref{maxmono},~(3), $G:=(\overline{T}+\id)^{-1}$ is the graph of a definable firmly non-expansive map $R^n\to R^n$ extending $g$ as required. \qed

\medskip

Inspection of the proof of Theorem~\ref{DefKirsz} given above exhibits a certain uniformity in the construction:

\begin{corollary} \label{cor:uniform Kirszbraun}
Let $a\mapsto L_a\colon A\to R^{\geq 0}$ be a definable function.
Let $\{f_a\}_{a\in A}$ be a definable family of maps $f_a\colon S_a\to R^n$, where $S_a\subseteq R^m$, such that each $f_a$ is $L_a$-Lipschitz. There exists a definable family $\{F_a\}_{a\in A}$ of maps $R^m\to R^n$, each $F_a$ being $L_a$-Lipschitz and extending $f_a$.
\end{corollary}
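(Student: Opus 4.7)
The approach is to trace through the proof of Theorem~\ref{DefKirsz} and verify that each step is a definable operation on the input data, so that when applied fiberwise to a definable family it produces a definable family as output. Hence with essentially no modification the same construction yields the asserted uniform extension, with only the degenerate case $L_a = 0$ requiring a separate argument.

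I would first isolate the definable set $A_0 := \{a \in A : L_a = 0\}$. For $a \in A_0$ with $S_a \neq \emptyset$, the map $f_a$ is necessarily constant with some value $c_a \in R^n$; the sets $\{c_a\}$ form a definable family of closed bounded non-empty singletons, so Lemma~\ref{lem:defchoice} supplies a definable selection $a \mapsto c_a$, which one extends arbitrarily (say by $0$) to the parameters in $A_0$ with empty fiber. Setting $F_a \equiv c_a$ for $a \in A_0$ dispatches this case.

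On $A_+ := \{a \in A : L_a > 0\}$, I would follow the proof of Theorem~\ref{DefKirsz} uniformly in $a$. The preliminary reductions at the start of that proof — padding to bring $m$ and $n$ into agreement, and rescaling $f_a$ by $1/L_a$ to reduce to the non-expansive case — are manifestly uniform. Forming $g_a := \frac{1}{2}(\id + f_a)$ and $T_a := g_a^{-1} - \id$ then gives a definable family of monotone set-valued maps. The crux is to observe that the construction of the maximal monotone extension $\ol{T}_a$ from Proposition~\ref{prop:extensionoperator} is itself visibly parametric: the Fitzpatrick function $\Phi_{T_a}$ is defined by a parametric supremum over $\graph(T_a)$, its conjugate $\Phi_{T_a}^*$ by a further parametric supremum, and the proximal average $\Psi_{T_a}$ by a parametric infimum. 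Each such operation preserves definable parametrization, so the set-valued map $\ol{T}_a$ represented by $\Psi_{T_a}$, and thence the firmly non-expansive map $G_a := (\ol{T}_a + \id)^{-1} \colon R^n \to R^n$, depend definably on $a$. Undoing the rescaling and padding then produces the desired family $\{F_a\}_{a \in A_+}$.

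The step that I expect to demand the most care is simply the bookkeeping verification that each of the building blocks — graphs, epigraphs, Fenchel conjugates, infimal convolutions, proximal averages, and the set-valued map represented by a function — genuinely commutes with passage to a definable family. Each of these, however, is ultimately expressed as a quantification over a parameter set that is already definable, so I do not anticipate any real difficulty beyond that already handled in the single-parameter proof of Theorem~\ref{DefKirsz}.
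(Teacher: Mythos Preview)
Your proposal is correct and matches the paper's approach exactly: the paper offers no separate argument for this corollary, merely remarking that ``inspection of the proof of Theorem~\ref{DefKirsz} given above exhibits a certain uniformity in the construction,'' which is precisely what you carry out in detail. Your explicit treatment of the degenerate case $L_a=0$ (and of empty fibers) is a sensible addition, since the proof of Theorem~\ref{DefKirsz} assumes $L>0$; you might also note that empty fibers can occur in $A_+$ as well, where the Fitzpatrick construction requires a non-trivial $T$, but this is handled by the same trivial extension $F_a\equiv 0$.
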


We finish this section with a question related to Theorem~\ref{thm:Kirszbraun}, to which we do not know the answer.
For a definable set $S\subseteq R^n$, let $\mathcal L_m(S)$ be the $R$-linear space of all definable Lipschitz maps $S\to R^m$, equipped with the seminorm
$$f\mapsto\abs{f} = \sup_{x\neq y} \frac{\norm{f(x)-f(y)}}{\norm{x-y}}.$$
Theorem~\ref{thm:Kirszbraun} shows the existence of a map $E\colon \mathcal L_m(S)\to\mathcal L_m(R^n)$ such that  for all $f\in \mathcal L_m(S)$, the map $E(f)$ extends $f$, and $\abs{E(f)}\leq\abs{f}$.

\begin{question}
Is there an \emph{$R$-linear} map $E\colon \mathcal L_m(S)\to\mathcal L_m(R^n)$ and some $C\in R$ such that for all $f\in \mathcal L_m(S)$,
$E(f)$ extends $f$, and $\abs{E(f)} \leq C\,\abs{f}$?
\end{question}
Note that since we do not require $C\leq 1$ (unlike in Theorem~\ref{thm:Kirszbraun}), it is enough to consider the case $m=1$. 
For $R=\R$ and o-minimal $\mathfrak R$, the answer to this question is positive, as shown in \cite{Pawlucki}.

\section{Some Variants}\label{sec:Variants}

\noindent
In this section we discuss a variant of Kirszbraun's Theorem for locally definable maps, and the problem of definably extending uniformly continuous maps, which is related to (but easier than) the problem of definably extending Lipschitz maps.

\subsection{Kirszbraun's Theorem for locally definable maps}\label{sec:locally definable}
Let $\mathfrak{R}$ be an expansion of the ordered field of real numbers. 
A set $S\subseteq \R^n$ is said to be {\bf locally definable} (in $\mathfrak R$) if for every $x\in \R^n$ there exists an open ball $B$ with center $x$ such that $B\cap S$ is definable.
A map $S\to\R^m$, where $S\subseteq \R^n$, is called locally definable if its graph is locally definable.
This notion encompasses both the subanalytic setting and Shiota's  notion \cite{Shiotageo} of $\mathfrak{X}$ families with axiom (v):

\pagebreak[2] 

\begin{examples} \mbox{}
\begin{enumerate}
\item A set $S\subseteq\R^n$ is locally definable in the expansion $\R_{\operatorname{an}}$ of the ordered field of real numbers by all restricted analytic functions if and only if $S$ is subanalytic (cf.~\cite[p.~507]{vdDM}).
\item Each $\mathfrak X$ family satisfying axiom (v) gives rise to an o-minimal expansion of the ordered field of reals with the property that the sets locally definable in this structure are precisely the sets in the given $\mathfrak X$ family (cf.~\cite{Schuermann}).
\end{enumerate}
\end{examples}

Many of the techniques used to prove the definable version of the Kirszbraun Theorem in the previous sections cannot be applied to locally definable maps and sets. 
In particular,  the intersection or union of a locally definable family of sets is not locally definable anymore in general, and
the pointwise infimum of a locally definable family of functions is also not necessarily locally definable.
However:

\begin{lemma}\label{lem:increasing sequence}
Suppose for each $\ell\in\N$ we are given a locally definable map $f_\ell\colon A_\ell\to\R^n$, where $A_\ell\subseteq\R^m$, such that $B_\ell(0)\subseteq A_\ell\subseteq A_{\ell+1}$ and $f_\ell=f_{\ell+1}|A_\ell$ for every $\ell$. Then the map $F\colon\R^m\to\R^n$ given by $F(x)=f_\ell(x)$, where $\ell$ is such that $x\in A_\ell$, is locally definable. Moreover, if each $f_\ell$ is $L$-Lipschitz, where  $L\in\R^{\geq 0}$, then $F$ is $L$-Lipschitz.
\end{lemma}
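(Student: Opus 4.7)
The plan has three parts, corresponding to well-definedness, local definability, and (conditionally) the Lipschitz estimate.

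First I would verify that $F$ is well-defined. Since $B_\ell(0)\subseteq A_\ell$ for each $\ell$ and every point of $\R^m$ lies in some $B_\ell(0)$, we have $\R^m=\bigcup_\ell A_\ell$, so for every $x\in\R^m$ there is some $\ell$ with $x\in A_\ell$. The hypothesis $f_\ell=f_{\ell+1}|A_\ell$ combined with the inclusions $A_\ell\subseteq A_{\ell+1}$ gives by induction $f_\ell=f_{\ell'}|A_\ell$ whenever $\ell\leq\ell'$; hence the value $f_\ell(x)$ is independent of the chosen $\ell$ with $x\in A_\ell$.

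Next I would establish local definability. Fix $(x,y)\in\R^m\times\R^n=\R^{m+n}$ and choose $\ell$ large enough that $x\in B_\ell(0)$. By local definability of $f_\ell$, there is an open ball $B'\subseteq\R^{m+n}$ centered at $(x,y)$ such that $B'\cap\graph(f_\ell)$ is definable. The set $B'\cap\bigl(B_\ell(0)\times\R^n\bigr)$ is open in $\R^{m+n}$ and contains $(x,y)$, so we may choose an open ball $B\subseteq\R^{m+n}$ centered at $(x,y)$ contained in it. For any $(u,v)\in B$ we have $u\in B_\ell(0)\subseteq A_\ell$, hence $F(u)=f_\ell(u)$, so $(u,v)\in\graph(F)$ iff $(u,v)\in\graph(f_\ell)$. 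Therefore
\[
B\cap\graph(F)=B\cap\graph(f_\ell)=B\cap\bigl(B'\cap\graph(f_\ell)\bigr),
\]
which is the intersection of two definable sets and hence definable. This shows $\graph(F)$ is locally definable at $(x,y)$.

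Finally, for the Lipschitz statement, given any $x,y\in\R^m$ pick $\ell$ so large that both $x$ and $y$ lie in $B_\ell(0)\subseteq A_\ell$. Then $F(x)=f_\ell(x)$ and $F(y)=f_\ell(y)$, so $\|F(x)-F(y)\|=\|f_\ell(x)-f_\ell(y)\|\leq L\|x-y\|$. The main (minor) subtlety to watch is keeping the notion of local definability honest: one must shrink the ball given by local definability of $f_\ell$ so that its projection to $\R^m$ sits inside $B_\ell(0)$, which is why the inclusion $B_\ell(0)\subseteq A_\ell$ (rather than merely $A_\ell$ being large) enters the argument. No other step requires machinery beyond elementary set theory and the consistency of the family.
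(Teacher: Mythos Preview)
Your argument is correct. The paper does not give a proof of this lemma at all---it merely calls it an ``observation'' and moves on---so your write-up supplies exactly the routine verification the authors omitted: well-definedness from the compatibility condition, local definability by shrinking a ball for $f_\ell$ so that its $\R^m$-projection lies in $B_\ell(0)$, and the Lipschitz bound by choosing $\ell$ large enough to contain both points.
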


This observation together with Theorem~\ref{DefKirsz} does yield locally definable variants of the Kirszbraun Theorem. For this, we fix a locally definable  $L$-Lipschitz map $f\colon A\rightarrow \R^n$, where $L>0$ and $A\subseteq \R^m$ is non-empty.
\begin{corollary}
Suppoe  $f$ is bounded. Then $f$ extends to a bounded locally definable $L$-Lipschitz map $\R^m\to\R^n$.
\end{corollary}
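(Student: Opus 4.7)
The plan is to apply Lemma~\ref{lem:increasing sequence} to a coherently constructed sequence of locally definable, bounded, $L$-Lipschitz maps $f_\ell\colon A_\ell\to\R^n$, with $A_\ell := \ol{B}_\ell(0)\cup A$, $A_0:=A$, $f_0:=f$. Preliminary bookkeeping (using boundedness of $f$): the domain $A$ is locally definable, and the restriction of any locally definable map to a closed ball is definable, as one sees by covering a compact ball by finitely many open balls on which the map is definable. In particular $A\cap\ol{B}_r(0)$ is definable for each $r$, and $f|A\cap\ol{B}_r(0)$ is a definable $L$-Lipschitz map. Fix $M$ with $\norm{f}\leq M$ on $A$, and a buffer $k\in\N$ with $k\geq 1+2M/L$.

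The inductive step from $f_\ell$ to $f_{\ell+1}$ will be the following: restrict $f_\ell$ to the definable set $D_\ell := \ol{B}_\ell(0)\cup(A\cap\ol{B}_{\ell+k}(0))$, where it is definable, bounded and $L$-Lipschitz; extend it via Theorem~\ref{DefKirsz} to a definable $L$-Lipschitz map $\tilde F_\ell\colon \R^m\to\cl(\conv(f_\ell(D_\ell)))\subseteq\ol{B}_M(0)$; and declare $f_{\ell+1}$ on $A_{\ell+1}=\ol{B}_{\ell+1}(0)\cup A$ to be $\tilde F_\ell$ on the ball and $f$ on $A$. Well-definedness on the overlap $A\cap\ol{B}_{\ell+1}(0)$ holds since $\tilde F_\ell=f_\ell=f$ there, boundedness by $M$ is inherited from $\tilde F_\ell$ and $f$, and local definability follows from a straightforward case analysis on whether a given point lies in the interior of $\ol{B}_{\ell+1}(0)$, outside $\ol{B}_{\ell+1}(0)$ on $A$, or on the bounding sphere. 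The compatibility $f_{\ell+1}|A_\ell=f_\ell$ required for Lemma~\ref{lem:increasing sequence} is built in, since $\tilde F_\ell$ extends $f_\ell|\ol{B}_\ell(0)$ and $f_{\ell+1}|A=f=f_\ell|A$.

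The main obstacle will be verifying that $f_{\ell+1}$ is $L$-Lipschitz on $A_{\ell+1}$, and this is exactly what motivates the buffer parameter $k$: naively gluing a Kirszbraun extension with the original $f$ on $A$ does not automatically yield a Lipschitz map, because points of $A$ can lie far outside the region on which Kirszbraun was applied. The only nontrivial case is $x\in\ol{B}_{\ell+1}(0)$ and $y\in A\setminus\ol{B}_{\ell+1}(0)$. If $y\in\ol{B}_{\ell+k}(0)$ then both $x$ and $y$ lie in $D_\ell$ with $\tilde F_\ell(y)=f(y)$, so $\norm{\tilde F_\ell(x)-f(y)}=\norm{\tilde F_\ell(x)-\tilde F_\ell(y)}\leq L\norm{x-y}$. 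Otherwise $\norm{y}>\ell+k$, and the triangle inequality gives $\norm{x-y}>k-1\geq 2M/L$, whence the crude estimate $\norm{\tilde F_\ell(x)-f(y)}\leq 2M\leq L\norm{x-y}$ concludes the case. Lemma~\ref{lem:increasing sequence} then produces the desired locally definable $L$-Lipschitz $F\colon\R^m\to\R^n$, which is bounded by $M$ and equal to $f$ on $A$ because every $f_\ell$ is.
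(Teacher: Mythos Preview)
Your argument is correct and follows essentially the same strategy as the paper: build a coherent sequence $f_\ell$ on $\ol{B}_\ell(0)\cup A$ by applying the definable Kirszbraun theorem on a slightly larger ball, glue with $f$ outside, and invoke Lemma~\ref{lem:increasing sequence}. The paper simply normalizes to $L=1$, $\norm{f}\leq 1$ (and $0\in A$) so that the buffer can be taken equal to $2$; your choice of $k\geq 1+2M/L$ plays the identical role. One small slip: in the case $y\in A\cap\ol{B}_{\ell+k}(0)$ you assert that both $x$ and $y$ lie in $D_\ell$, but $x\in\ol{B}_{\ell+1}(0)$ need not lie in $D_\ell=\ol{B}_\ell(0)\cup(A\cap\ol{B}_{\ell+k}(0))$. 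This is harmless, since $\tilde F_\ell$ is $L$-Lipschitz on all of $\R^m$ and only $\tilde F_\ell(y)=f(y)$ (which follows from $y\in D_\ell$) is needed. You should also note that your formula for $D_0$ includes the point $0$, which may not be in $A=A_0$; either assume $0\in A$ after a translation (as the paper does) or take $D_0=A\cap\ol{B}_k(0)$.
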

\begin{proof}
By considering $x\mapsto C\cdot f(x+a)$ (for suitable $C\in\R^{>0}$ and arbitrary $a\in A$) in place of $f$, we may assume $L=1$, $f$ is bounded by $1$, and $0\in A$. For every $\ell\in \N$
we construct  a  locally definable non-expansive map $f_\ell\colon A_\ell:=\overline{B}_\ell(0)\cup A\to \overline{B}_1(0)$
such that for each $\ell$ we have $f_{\ell+1}=f_{\ell}$ on $B_\ell(0)$ and $f_\ell=f$ on $A$. Set $f_0:=f$. Suppose now that $\ell>0$ and the map $f_{\ell-1}$ has been constructed already. By
Theorem~\ref{DefKirsz}, there is a definable non-expansive function $g_\ell\colon\R^m\rightarrow \overline{B}_1(0)$ such that $g_\ell=f_{\ell-1}$ on $A_{\ell-1}\cap \overline{B}_{\ell+2}(0)$. For $x\in A_{\ell}$, set 
\[
f_{\ell}(x)=\begin{cases}
g_\ell(x)&\text{if } \norm{x}\leq \ell,\\
f(x)&\text{if }\norm{x}>\ell.
\end{cases}
\]
We claim that $f_{\ell}$ is non-expansive.
Suppose $x,y\in A_{\ell}$. 
If $\norm{x},\norm{y}\leq \ell$ or $\norm{x},\norm{y}>\ell$, then clearly $\norm{f_{\ell}(x)-f_{\ell}(y)}\leq \norm{x-y}$. 
Assume now that $\norm{x}\leq \ell$ and $\norm{y}>\ell$.
If $\norm{y}\leq \ell+2$, then 
\[
\norm{f_{\ell}(x)-f_{\ell}(y)}=\norm{g_\ell(x)-g_\ell(y)}\leq \norm{x-y}.
\]
If $\norm{y} > \ell+2$, then $\norm{x-y}>2$. Since $\norm{f_{\ell}}\leq 1$, we have 
\[
\norm{f_{\ell}(x)-f_{\ell}(y)}\leq \norm{f_{\ell}(x)}+\norm{f_{\ell}(y)}\leq 1+1< \norm{x-y}.
\]
Hence $f_{\ell}$ is non-expansive.
Now apply Lemma~\ref{lem:increasing sequence}.
\end{proof}
An enhancement of the previous proof leads to the following corollary.
\begin{corollary}
Let $\varepsilon>0$. Then $f$ extends to a  locally definable $(L+\varepsilon)$-Lipschitz map $\R^m\to\R^n$.
\end{corollary}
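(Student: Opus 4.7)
The plan is to iterate the construction of the previous corollary, but replacing the Lipschitz constant $L$ with $L' := L+\varepsilon$ and exploiting the slack $L'-L = \varepsilon > 0$ where boundedness of $f$ was previously used to glue at infinity. Fix $a_0 \in A$ and set $R_0 := \norm{a_0}+1$. The goal is to build, for each $\ell \in \N$, a locally definable $L'$-Lipschitz map $f_\ell\colon A_\ell \to \R^n$, where $A_\ell := \overline{B}_\ell(0) \cup A$, with $f_0 := f$, $f_\ell\vert_A = f$, and $f_\ell\vert_{A_{\ell-1}} = f_{\ell-1}$; applying Lemma~\ref{lem:increasing sequence} to the sequence then yields the required $L'$-Lipschitz locally definable extension $F\colon\R^m\to\R^n$.

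For the inductive step, suppose $f_{\ell-1}$ has been constructed, and pick
\[ R_\ell \;\geq\; \max\left(\ell,\; \frac{2L'\ell + (L+L')R_0}{L'-L}\right). \]
A standard finite subcover argument on the closed bounded set $\overline{B}_{R_\ell}(0)$ (using also that $f_{\ell-1}$ has bounded image on $\overline{B}_{R_\ell}(0)$ by the inductive Lipschitz bound) shows that $h := f_{\ell-1}\vert_{A_{\ell-1}\cap\overline{B}_{R_\ell}(0)}$ is definable; it is also $L'$-Lipschitz by induction. Theorem~\ref{DefKirsz} supplies a definable $L'$-Lipschitz extension $g_\ell\colon\R^m\to\R^n$ of $h$. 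Put
\[ f_\ell(x) := \begin{cases} g_\ell(x) & \text{if } x \in \overline{B}_\ell(0),\\ f(x) & \text{if } x \in A\setminus\overline{B}_\ell(0). \end{cases} \]
This is well-defined on the overlap $A\cap\overline{B}_\ell(0)$, where $g_\ell = h = f_{\ell-1} = f$, and is locally definable because on $\overline{B}_\ell(0)$ it coincides with the definable $g_\ell$ and on $A$ with the locally definable $f$.

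The $L'$-Lipschitz property of $f_\ell$ reduces to the mixed case $x \in \overline{B}_\ell(0)$, $y \in A\setminus\overline{B}_\ell(0)$. If $\norm{y}\leq R_\ell$, then $y$ lies in the domain of $h$, so $g_\ell(y)=f(y)$ and the bound follows from $L'$-Lipschitzness of $g_\ell$. If $\norm{y} > R_\ell$, note that $g_\ell(a_0)=h(a_0)=f(a_0)$ and apply the triangle inequality to obtain
\[ \norm{g_\ell(x)-f(y)} \;\leq\; L'(\ell+R_0) + L(R_0+\norm{y}), \qquad \norm{x-y} \;\geq\; \norm{y}-\ell; \]
an elementary rearrangement shows that the choice of $R_\ell$ is precisely what forces the ratio to be at most $L'$. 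This far-field estimate is the main obstacle: without the slack $\varepsilon > 0$ one cannot absorb the drift between $g_\ell(x)$ and the possibly unbounded $f(y)$ at far-away $y$, a role played by the bound $\norm{f}\leq 1$ in the previous corollary. Finally, $f_\ell\vert_{A_{\ell-1}} = f_{\ell-1}$ (on $\overline{B}_{\ell-1}(0)$ because $f_\ell = g_\ell = h = f_{\ell-1}$, and on $A$ because both equal $f$), so Lemma~\ref{lem:increasing sequence} applies and delivers the desired $F$.
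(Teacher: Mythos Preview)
Your proof is correct and follows essentially the same inductive scheme as the paper: build $f_\ell$ on $\overline{B}_\ell(0)\cup A$ by applying Theorem~\ref{DefKirsz} to a bounded restriction, glue via a far-field estimate, and invoke Lemma~\ref{lem:increasing sequence}. One pleasant difference is that where the paper introduces a summable sequence $(\varepsilon_\ell)$ and increasing Lipschitz constants $L_\ell=1+\sum_{p\le\ell}\varepsilon_p$, you keep the constant fixed at $L'=L+\varepsilon$ throughout by routing the far-field estimate through the anchor point $a_0\in A$ and using that the original $f$ is $L$-Lipschitz there; this recovers the full slack $\varepsilon$ at every stage and avoids the series bookkeeping, a small but genuine simplification.
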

\begin{proof}
After replacing $f$ by $x\mapsto \frac{1}{L}\big(f(x+a)-f(a)\big)$, where $a\in A$ is arbitrary, we may assume that $L=1$ and $f(0)=0$. 
Let $(\varepsilon_\ell)_{\ell\in \N}$ be a strictly decreasing sequence of positive real numbers such that $\sum_{\ell}\varepsilon_\ell<\varepsilon$. 
For each $\ell\in\N$ let 
\[
L_\ell:=1+\sum_{p=0}^\ell\varepsilon_p.
\]
We construct for every $\ell\in \N$ a  locally definable $L_\ell$-Lipschitz function $f_\ell\colon A_\ell:=\overline{B}_\ell(0)\cup A\rightarrow \R^n$
such that for each $\ell$ we have $f_{\ell+1}=f_{\ell}$ on $\overline{B}_{\ell}(0)$ and $f_\ell=f$ on $A\cap B_\ell(0)$. 
Set $f_0:=f$. Suppose $\ell>0$ and $f_{\ell-1}$ has been constructed already.
Let 
\[
M_\ell=\sup\big\{\norm{f_{\ell-1}(x)}: x\in A_{\ell-1},\ \norm{x}\leq \ell\big\}.
\]
 Select $r_\ell>\ell$ so big such that 
$\varepsilon_\ell r_\ell\geq M_\ell +\ell L_\ell$.
 By Theorem~\ref{DefKirsz}, there is a definable $L_{\ell-1}$-Lipschitz map $g_\ell\colon\R^m\rightarrow \R^n$ such that $g_\ell=f_{\ell-1}$ on $A_\ell\cap \overline{B}_{r_\ell}(0)$. Define
$f_\ell\colon A_\ell\rightarrow \R^n$ by 
\[
f_\ell(x)=\begin{cases}
g_\ell(x)&\text{if $\norm{x}\leq \ell$,}\\
f(x)&\text{if $\norm{x}>\ell$.}
\end{cases}
\]
We claim that $f_\ell$ is $L_\ell$-Lipschitz.
Suppose $x,y\in A_\ell$. 
If $\norm{x},\norm{y}\leq \ell$ or $\norm{x},\norm{y}> \ell$, then $\norm{f_\ell(x)-f_\ell(y)}\leq L_\ell\norm{x-y}$ is evident. 
Assume now that $\norm{x}\leq \ell$ and $\norm{y} > \ell$.
If $\norm{y}\leq r_\ell$, then 
\[
\norm{f_\ell(x)-f_\ell(y)}=\norm{g_\ell(x)-g_\ell(y)}\leq L_\ell\norm{x-y}.
\]
 If $\norm{y} > r_\ell$, then we have 
\begin{align*}
\norm{f_\ell(x)-f_\ell(y)}&\leq \norm{f_\ell(x)}+\norm{f_\ell(y)}\\
&\leq M_\ell+L_{\ell-1}\norm{y}\\
&<\varepsilon_\ell \norm{y}-\ell L_\ell+L_{\ell-1}\norm{y}\\
&=  L_\ell\norm{y}-\ell L_\ell\\
&\leq L_\ell\norm{y}-\norm{x}L_\ell\\
&\leq L_\ell\norm{x-y}.
\end{align*}
Hence $f_\ell$ is $L_\ell$-Lipschitz.
Now apply Lemma~\ref{lem:increasing sequence}.
\end{proof}
The previous two corollaries raise the following question, the answer to which we do not know:

\begin{question}
Does the Kirszbraun Theorem hold for locally definable maps, i.e.: given a locally definable $L$-Lipschitz map $f\colon A\rightarrow \R^n$, where $A\subseteq\R^m$, does $f$ extend to a  locally definable $L$-Lipschitz map $\R^m\to\R^n$?
\end{question}

\subsection{Extending uniformly continuous maps}
In this subsection we let $\mathfrak R$ be a definably complete expansion of an ordered field.
Every Lipschitz map is uniformly continuous, so in light of Theorem~\ref{DefKirsz} it is natural to ask: {\it when does a definable uniformly continuous map $A\to R^n$, where $A\subseteq R^m$, extend to a uniformly continuous map $R^m\to R^n$?}\/ The aim of this subsection is to give a complete answer to this question (see Proposition~\ref{prop:GZ}), following \cite{Gruenbaum}, where this question was treated for $R=\R$ without definability requirements.

For this, let $f\colon A\to B$ be a definable map, where $A\subseteq R^m$ is non-empty and and $B\subseteq R^n$. We also assume that $A$ is closed. (Recall from Lemma~\ref{lem:extension to closure} that a definable uniformly continuous map always extends to a definable uniformly continuous map on the closure of its domain.) Note that $f$ is uniformly continuous if and only if each of the $n$ coordinate functions of $f$ is uniformly continuous, and similarly with ``continuous'' in place of ``uniformly continuous.'' Hence, in order to study the extendability of $f$ to a uniformly continuous (or merely continuous) map $R^m\to R^n$, we may further assume that $n=1$, which we do from now on.

Before we study uniformly continuous extensions, it is perhaps worth noting that if $f$ is continuous, then $f$ always extends to a definable continuous function on $R^m$:

\begin{lemma}[Definable Tietze Extension Theorem]\label{lem:Tietze}
Suppose $f$ is continuous. Then there exists a definable continuous function $F\colon R^m\to R$ with $F|A=f$.
\end{lemma}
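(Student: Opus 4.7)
The plan is to reduce to the case of a bounded function via composition with a definable homeomorphism $R\to(-1,1)$, construct the extension by a distance-based infimum formula, force the image into the open interval $(-1,1)$ using a definable Urysohn function, and then compose back.

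Fix the definable order-preserving homeomorphism $\varphi\colon R\to(-1,1)$ given by $\varphi(t)=t/(1+|t|)$. It suffices to extend $\varphi\circ f\colon A\to(-1,1)$ to a definable continuous map $R^m\to(-1,1)$, since composing with $\varphi^{-1}$ yields the desired $F$. So I may assume $f(A)\subseteq[-1,1]$. I set $H:=f$ on $A$ and, for $x\in R^m\setminus A$, put
\[
H(x) \;:=\; \inf_{a\in A}\left(f(a)+\max\!\left(0,\ \tfrac{\|x-a\|}{d(x,A)}-2\right)\right).
\]
A nearest point $a^{\ast}\in A$ to $x$ (Corollary~\ref{cor:nearest point}) gives value $f(a^{\ast})\leq 1$, while any $a$ with $\|x-a\|>4\,d(x,A)$ gives value $\geq 1$. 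Hence the infimum is attained on the closed bounded non-empty set $A\cap\overline{B}_{4\,d(x,A)}(x)$ by Corollary~\ref{cor:minmax}, so $H$ is a definable function $R^m\to[-1,1]$.

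Continuity of $H$ is then verified in two cases. At $x_0\in A$: as $x\to x_0$, $d(x,A)\leq\|x-x_0\|\to 0$, and any relevant $a$ satisfies $\|a-x_0\|\leq 5\|x-x_0\|\to 0$; continuity of $f$ at $x_0$ sandwiches $H(x)$ between $\min\{f(a):a\in A,\ \|a-x_0\|\leq 5\|x-x_0\|\}$ and $f(a^{\ast})$, both of which tend to $f(x_0)$. At $x_0\notin A$: on a ball $B_\delta(x_0)$, $d(\cdot,A)$ is bounded below by a positive constant, and the relevant range of $a$'s is contained in a single closed bounded $K\subseteq A$, valid uniformly for $x\in B_\delta(x_0)$. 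The objective $\Phi(x,a):=f(a)+\max(0,\|x-a\|/d(x,A)-2)$ is continuous on $B_\delta(x_0)\times K$, and continuity of $x\mapsto \min_{a\in K}\Phi(x,a)$ follows by extracting accumulation points of minimizing sequence-maps via the definable Bolzano-Weierstrass theorem (Proposition~\ref{prop:BW}).

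Finally, let $N:=\{x\in R^m:|H(x)|=1\}$, a closed definable set disjoint from $A$ since $f(A)\subseteq(-1,1)$. If $N=\emptyset$ set $G:=H$; otherwise, using the non-expansive distance functions (Corollary~\ref{cor:distance is non-expansive}), define the definable Urysohn function
\[
u(x) \;:=\; \frac{d(x,N)}{d(x,N)+d(x,A)},
\]
which is continuous, equals $1$ on $A$, and $0$ on $N$. Then $G:=u\cdot H$ is a definable continuous function $R^m\to(-1,1)$ extending $f$: on $A$, $G=f$ has absolute value less than $1$; on $N$, $G=0$; elsewhere $x\notin N$ gives $|H(x)|<1$, and $u\leq 1$ yields $|G(x)|<1$. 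The composition $F:=\varphi^{-1}\circ G\colon R^m\to R$ is then the desired definable continuous extension of $f$. The main obstacle is the continuity verification at points $x_0\notin A$, which requires a definable analogue of the classical fact that the pointwise minimum of a jointly continuous function over a compact parameter set depends continuously on the remaining parameters; this is where Proposition~\ref{prop:BW} enters essentially.
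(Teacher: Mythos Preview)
Your argument is correct, but it takes a more circuitous route than the paper's. The paper reduces to the case $f\colon A\to(1,2)$ via a definable homeomorphism $\tau\colon R\to(1,2)$ and then uses the Riesz extension formula
\[
F(x)=\inf_{a\in A} f(a)\cdot\frac{d(x,a)}{d(x,A)}\qquad(x\notin A),
\]
simply asserting that continuity is straightforward to verify. The multiplicative structure together with $f>1$ forces each term to exceed $1$, and since the infimum is attained on the closed bounded set $\{a\in A:\norm{x-a}\leq 2\,d(x,A)\}$, one gets $F(x)\in(1,2)$ directly---no Urysohn correction needed.

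Your additive penalty formula $f(a)+\max(0,\norm{x-a}/d(x,A)-2)$ is a genuinely different construction, and in fact shares the same feature you overlooked: because the infimum is attained at some $a_0$ in a closed bounded piece of $A$, the value is at least $f(a_0)>-1$ and at most $f(a^\ast)<1$, so $H(x)\in(-1,1)$ automatically. Your Urysohn step is therefore redundant (though harmless). One further remark: the continuity at $x_0\notin A$ does not need the definable Bolzano--Weierstrass machinery. Since $\Phi$ is continuous on the closed bounded set $\ol{B}_{\delta/2}(x_0)\times K$, Lemma~\ref{lem:cbd implies uniform continuous} gives uniform continuity, and then $\abs{\min_{a\in K}\Phi(x,a)-\min_{a\in K}\Phi(x',a)}\leq\sup_{a\in K}\abs{\Phi(x,a)-\Phi(x',a)}$ does the job directly. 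What your approach buys is an explicit penalty structure that makes the localisation of the infimum transparent; what the paper's Riesz formula buys is brevity and the automatic range control via multiplicativity.
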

\begin{proof}
First assume $B=(1,2)$. In this case one simply verifies that the definable function $F\colon R^m\to B$ with $F|A=f$ and
$$F(x) := \inf_{a\in A} f(a)\cdot\frac{d(x,a)}{d(x,A)}\qquad\text{for $x\in R^m\setminus A$}$$
is continuous. This well-known formula is due to Riesz (1923), and related to similar extension formulas by Hausdorff (1919) and Tietze (1915). For the general case, let $\tau$ be a definable homeomorphism $R\to (1,2)$, such as $$t\mapsto \frac{3}{2}+\frac{t}{2\sqrt{1+t^2}},$$ 
and note that if $F\colon R^m \to (1,2)$ extends $\tau\circ f$, then $\tau^{-1}\circ F$ extends $f$.
\end{proof}


(The proof of the definable version of Tietze Extension above is shorter and more elementary than the one in \cite[Chapter~8]{vdDries-Tame}, which is only valid for o-minimal $\mathfrak R$ and uses triangulations.)

\medskip

The classical counterpart of the following fact was proved in \cite{Gruenbaum}:

\begin{proposition}\label{prop:GZ}
Suppose $f$ is uniformly continuous.
The following are equivalent:
\begin{enumerate}
\item $f$ extends to a definable uniformly continuous function $R^m\to R$;
\item  $f$ has a definable subadditive modulus of continuity $\omega$ such that $\omega(t)\to 0$ as $t\to 0^+$;
\item $f$ has an affine modulus of continuity;
\item $f$ has a definable concave modulus of continuity $\omega$ with $\omega(t)\to 0$ as $t\to 0^+$.
\end{enumerate}
\end{proposition}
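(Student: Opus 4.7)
The plan is to establish the four-way equivalence via the cycle $(2) \Rightarrow (3) \Rightarrow (4) \Rightarrow (2)$, coupled with $(1) \Leftrightarrow (2)$. The implication $(4) \Rightarrow (3)$ is essentially free (any concave $\omega$ with $\omega(0)=0$ is bounded above by a tangent-type affine function on $[0,\infty)$, using that $\omega(t)/t$ is nonincreasing), so it adds nothing to the cycle. The implications $(2) \Rightarrow (3)$ and $(4) \Rightarrow (2)$ are classical manipulations of subadditive and concave functions; the real content is $(3) \Rightarrow (4)$, which is where uniform continuity of $f$ is used in an essential way.

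For $(1) \Rightarrow (2)$: if $F\colon R^m \to R$ is a definable uniformly continuous extension of $f$, then $\omega_F$ is a definable modulus of continuity for $f$ with $\omega_F(0^+) = 0$, and subadditivity of $\omega_F$ follows from convexity of $R^m$ (see the remark preceding Proposition~\ref{prop:McShane-Whitney}): given $x,y \in R^m$ with $\|x-y\| \leq s+t$, choose $z$ on the segment $[x,y]$ with $\|x-z\| \leq s$ and $\|z-y\| \leq t$ and apply the triangle inequality to $|F(x)-F(y)|$. Conversely, for $(2) \Rightarrow (1)$, given a definable subadditive $\omega$ with $\omega(0^+)=0$, I would replace $\omega$ by its increasing hull $\tilde\omega(t) := \sup_{s \in [0,t]} \omega(s)$: this is still definable (by definable completeness), still subadditive (split $u \leq s+t$ as $\min(u,s) + (u-\min(u,s))$), increasing, and still satisfies $\tilde\omega(0^+) = 0$. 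Then Proposition~\ref{prop:McShane-Whitney} directly yields the uniformly continuous definable extension $F(x) := \inf_{a \in A}\bigl(f(a)+\tilde\omega(\|x-a\|)\bigr)$.

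For the easy modulus implications: $(2) \Rightarrow (3)$ reuses the increasing hull $\tilde\omega$, choosing $\delta > 0$ so that $\tilde\omega(\delta) < \infty$ (possible since $\tilde\omega(0^+)=0$); for $t = n\delta + r$ with $n \in \N$ and $r \in [0,\delta)$, subadditivity and monotonicity give $\tilde\omega(t) \leq (n+1)\tilde\omega(\delta) \leq (\tilde\omega(\delta)/\delta)\, t + \tilde\omega(\delta)$, which is affine. $(4) \Rightarrow (2)$ is the standard observation that a nonnegative concave $\omega$ with $\omega(0)=0$ is subadditive: concavity applied to $a = \tfrac{a}{a+b}(a+b) + \tfrac{b}{a+b}\cdot 0$ yields $\omega(a) \geq \tfrac{a}{a+b}\omega(a+b)$, similarly for $b$, and adding gives $\omega(a+b) \leq \omega(a)+\omega(b)$.

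The main obstacle is $(3) \Rightarrow (4)$. Given an affine modulus $\omega_f \leq Lt+c$, I would define
$$\bar\omega(t) := \inf\big\{L''t + c'' : L'',\,c'' \geq 0,\ L''s + c'' \geq \omega_f(s)\text{ for all }s \geq 0\big\}.$$
The collection of admissible pairs is definable (the defining condition is a universally quantified inequality against the definable function $\omega_f$) and non-empty (it contains $(L,c)$), so by definable completeness $\bar\omega$ is a well-defined definable function with $\omega_f \leq \bar\omega \leq Lt+c$. Concavity of $\bar\omega$ follows from Lemma~\ref{lem:sup of convex functions} applied to $-\bar\omega$, which is a pointwise supremum of affine (convex) functions. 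The crucial claim is $\bar\omega(0^+) = 0$: given $\varepsilon > 0$, use uniform continuity of $f$ to select $\delta > 0$ with $\omega_f(s) \leq \varepsilon$ for all $s \in [0,\delta]$, and set $L'' := L + c/\delta$, $c'' := \varepsilon$. For $s \leq \delta$ one has $L''s + c'' \geq \varepsilon \geq \omega_f(s)$, while for $s \geq \delta$ one has $L''s + c'' = Ls + (c/\delta)s + \varepsilon \geq Ls + c + \varepsilon \geq \omega_f(s)$. Hence $(L'',c'')$ is admissible, forcing $\bar\omega(0) \leq \varepsilon$. Since $\varepsilon > 0$ was arbitrary and $\bar\omega$ is concave (hence right-continuous at $0$), this gives $\bar\omega(0^+) = 0$, completing the construction of the concave modulus required by (4).
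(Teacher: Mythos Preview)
Your overall structure matches the paper's: a cycle through the four conditions, with the concave hull of $\omega_f$ from above (infimum of affine majorants) providing the passage to (4), and concavity plus $\omega(0)=0$ yielding subadditivity. Your arguments for $(1)\Leftrightarrow(2)$, $(3)\Rightarrow(4)$, and $(4)\Rightarrow(2)$ are correct and essentially coincide with the paper's.

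There is, however, a genuine gap in your proof of $(2)\Rightarrow(3)$. You write $t = n\delta + r$ with $n\in\N$ and $r\in[0,\delta)$; this decomposition requires the archimedean property of the ordered field $R$, which is \emph{not} assumed here. Definably complete expansions of ordered fields include non-archimedean real closed fields (indeed, any o-minimal expansion of a non-archimedean real closed field is definably complete), and for such $R$ there exist $t,\delta>0$ with $t>n\delta$ for every $n\in\N$. The paper explicitly flags this issue and replaces the classical argument by a different one: choose $\delta>0$ with $\omega(t)<1$ on $[0,\delta]$, set $\omega_1(t)=2+t/\delta$, and show by contradiction that the definable set $B=\{\omega>\omega_1\}$ is empty. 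If not, put $b=\inf B$ (using definable completeness); one checks $b\geq 2\delta$, and for $s\in[b,b+\delta)$ subadditivity gives $\omega(s)\leq\omega(s-\delta)+\omega(\delta)<\omega_1(s-\delta)+1=\omega_1(s)$, so $B\cap[b,b+\delta)=\emptyset$, contradicting $b=\inf B$. This ``one-step'' argument avoids iterating $n$ times and works over any definably complete $R$.
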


The implication (1)~$\Rightarrow$~(2) is clear. The implications (2)~$\Rightarrow$~(3) and (3)~$\Rightarrow$~(4) follow from the next two lemmas, for which we fix a definable function $\omega\colon R^{\geq 0}\to R^{\geq 0}$. The classical proof of the first lemma (as given in \cite{Gruenbaum}) uses the archimedean property of $\R$. 

\begin{lemma}
Suppose $\omega$ is subadditive and $\omega(t)\to 0$ as $t\to 0^+$. Then there is an affine function $\omega_1\colon R\to R$ with $\omega_1\geq\omega$. 
\end{lemma}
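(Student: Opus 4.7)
My plan is to produce an explicit affine upper bound of the form $\omega_1(t)=1+bt$ and to verify $\omega\leq\omega_1$ by an argument that substitutes an appeal to definable completeness for the archimedean step of the classical proof. First, since $\omega(t)\to 0$ as $t\to 0^+$, I can pick $\delta>0$ with $\omega\leq 1$ on $[0,\delta]$. I then set $b:=2/\delta$ and claim $\omega(t)\leq 1+bt$ for every $t\geq 0$.

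The verification is by contradiction. Consider the definable set $S:=\{t\geq 0:\omega(t)>1+bt\}$ and suppose $S\neq\emptyset$. On $[0,\delta]$ we have $\omega(t)\leq 1\leq 1+bt$, so $S\subseteq(\delta,+\infty)$; definable completeness then gives $T:=\inf S\in R$ with $T\geq\delta$. For any $t\in S$, since $t\geq T\geq\delta$ we have $t-\delta/2>0$, and subadditivity yields
\[
\omega(t)\leq\omega(t-\delta/2)+\omega(\delta/2)\leq\omega(t-\delta/2)+1.
\]
If $t-\delta/2<T$ then $t-\delta/2\notin S$, so $\omega(t-\delta/2)\leq 1+b(t-\delta/2)=bt$, which gives $\omega(t)\leq 1+bt$ and contradicts $t\in S$. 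Hence $t-\delta/2\geq T$, i.e.\ $t\geq T+\delta/2$, for every $t\in S$. But then $T=\inf S\geq T+\delta/2>T$, a contradiction, so $S=\emptyset$.

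The conceptual obstacle I must bypass is that the classical argument (as in the cited \cite{Gruenbaum}) writes an arbitrary $t\geq 0$ as $n(\delta/2)+r$ with $n\in\N$ and $r\in[0,\delta/2)$, and estimates $\omega(t)\leq n\omega(\delta/2)+\omega(r)\leq n+1$; this division with remainder is unavailable when $R$ is non-archimedean and $t$ is infinite. The ``shift by $\delta/2$'' move above, controlled via the definably complete infimum $T=\inf S$, is what lets definable completeness play the role of the archimedean property: instead of counting integer pieces of size $\delta/2$, we just push a hypothetical bad point $t$ down by $\delta/2$ to land below $T$, where the bound is known by minimality of $T$.
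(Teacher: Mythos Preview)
Your proof is correct and follows essentially the same strategy as the paper's: both take the infimum of the definable ``bad set'' where the proposed affine bound fails, then use subadditivity to shift a bad point down by a fixed positive amount below that infimum, yielding a contradiction. The only differences are cosmetic choices of constants---you use $\omega_1(t)=1+(2/\delta)t$ with a shift of $\delta/2$, while the paper uses $\omega_1(t)=2+(1/\delta)t$ with a shift of $\delta$---and the paper phrases the contradiction as ``$B\cap[b,b+\delta)=\emptyset$'' rather than ``$\inf S\geq T+\delta/2$''.
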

\begin{proof}
There is $\delta>0$ such that
$\omega(t)<1$ for all $t\in [0,\delta]$, hence $\omega(t)\leq 2$ for $t\in[0,2\delta]$ by subadditivity.
We claim that $\omega_1\colon R\rightarrow R$ given by
$\omega_1(t):=2+\textstyle\frac{1}{\delta}t$
majorizes $\omega$.
Assume for a contradiction that the subset $B:=\{\omega>\omega_1\}$ of $R^{\geq 0}$
is non-empty, and put $b:=\inf B$. Evidently, we have $b\geq 2\delta$.
Let $s\in[b,b+\delta)$. Then
\begin{align*}
\omega(s)&=\omega(s-\delta+\delta)\leq\omega(s-\delta)+\omega(\delta)\\
&< \omega_1(s-\delta)+1=2+\textstyle\frac{1}{\delta}(s-\delta)+1=\omega_1(s).
\end{align*}
Hence $B\cap [b,b+\delta)=\emptyset$, a contradiction.
\end{proof}

\begin{lemma}[McShane]
Suppose there exists an affine function $\omega_1$ with $\omega_1\geq\omega$. Then there exists a definable  concave function $\omega_2$ with $\omega_2\geq\omega$; if $\omega(t)\to 0$ as $t\to 0^+$, then $\omega_2$ can be chosen so that moreover $\omega_2(t)\to 0$ as $t\to 0^+$.
\end{lemma}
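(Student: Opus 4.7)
The plan is to take $\omega_2$ to be the \emph{concave envelope} of $\omega$ on $R^{\geq 0}$, namely the pointwise infimum of all affine majorants of $\omega$:
$$\omega_2(t):=\inf\bigl\{a+bt:a,b\in R,\ a+bs\geq\omega(s)\text{ for all }s\in R^{\geq 0}\bigr\}.$$
The indexing set $D\subseteq R^2$ of pairs $(a,b)$ is definable (as $\omega$ is) and non-empty, because the coefficients of the given $\omega_1$ lie in it. By definable completeness, the infimum is taken over a definable set of values bounded below by $\omega(t)\geq 0$, so $\omega_2\colon R^{\geq 0}\to R$ is a definable function with $\omega\leq\omega_2\leq\omega_1$. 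Concavity of $\omega_2$ will follow from Lemma~\ref{lem:sup of convex functions} applied to $-\omega_2=\sup_{(a,b)\in D}(-a-bt)$, or equivalently from the routine fact that the pointwise infimum of a family of affine (hence concave) functions is concave.

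The substance of the lemma lies in the behaviour at $0^+$. Write $\omega_1(t)=a_0+b_0t$; since $\omega\geq 0$ on $R^{\geq 0}$ is unbounded in its domain, necessarily $a_0\geq 0$ and $b_0\geq 0$ (else $\omega_1(t)$ would go negative). Given $\varepsilon>0$, I would use the hypothesis $\omega(t)\to 0$ to choose $\delta>0$ with $\omega(t)\leq\varepsilon$ on $[0,\delta]$, and then exhibit an explicit affine majorant of $\omega$ whose $y$-intercept is $\varepsilon$: set
$$\varphi(t):=\varepsilon+Mt,\qquad M:=b_0+\max\bigl\{0,(a_0-\varepsilon)/\delta\bigr\}.$$
On $[0,\delta]$, $\varphi(t)\geq\varepsilon\geq\omega(t)$; on $[\delta,\infty)$, the choice of $M$ forces $\varphi(t)\geq a_0+b_0t=\omega_1(t)\geq\omega(t)$. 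Hence $(\varepsilon,M)\in D$, so $\omega_2(t)\leq\varepsilon+Mt$, giving $\limsup_{t\to 0^+}\omega_2(t)\leq\varepsilon$, and letting $\varepsilon\to 0^+$ yields $\omega_2(t)\to 0$.

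The main obstacle is precisely this last step: definability, concavity, and the inequality $\omega_2\geq\omega$ are automatic for the concave envelope, but ensuring that $\omega_2(t)\to 0$ as $t\to 0^+$ requires producing affine majorants of $\omega$ with arbitrarily small $y$-intercept. The explicit $\varphi$ above supplies them, leveraging the hypothesis $\omega(t)\to 0$ to dominate $\omega$ near the origin and the initial affine bound $\omega_1$ to dominate it away from the origin.
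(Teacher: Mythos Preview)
Your proof is correct and follows essentially the same approach as the paper: define $\omega_2$ as the infimum of all affine majorants of $\omega$ (the concave envelope), then for each $\varepsilon>0$ exhibit an affine majorant with $y$-intercept $\varepsilon$ by combining the bound $\omega\leq\varepsilon$ on $[0,\delta]$ with the bound $\omega\leq\omega_1$ on $[\delta,\infty)$. The only difference is cosmetic: where the paper simply says ``take some $b>0$ such that $\varepsilon+bt>a_0+b_0t$ for $t>\delta$,'' you write down an explicit slope $M=b_0+\max\{0,(a_0-\varepsilon)/\delta\}$ that does the job.
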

\begin{proof}
For $a,b\in R$  let
$\omega_{a,b}(t)=a+bt$, and let $a_0,b_0\in R$ with $\omega_1=\omega_{a_0,b_0}$. Then
\[
\omega_2(t):=\inf\big\{\omega_{a,b}(t):  a,b\in R,\ \omega\leq \omega_{a,b}\big\}
\]
is a definable concave function with $\omega_2\geq\omega$. Assume now that $\lim\limits_{t\to 0^+}\omega(t) = 0$.
To see that $\lim\limits_{t\to 0^+}\omega_2(t)=0$, let $\varepsilon>0$ be given. Take $\delta>0$ such that $\omega(t)\leq\varepsilon$ for $0\leq t\leq\delta$. Take some $b>0$ such that $\omega_{\varepsilon,b}(t)>\omega_{a_0,b_0}(t)$ for $t>\delta$. Then $\omega(t)\leq\omega_{\varepsilon,b}(t)$ for all $t\geq 0$, so $\omega_2(t)\leq\omega_{\varepsilon,b}(t)$ for all $t\geq 0$. Also, $\omega_{\varepsilon,b}(t)\to\varepsilon$ as $t\to 0^+$. This yields the claim.
\end{proof}

The implication (4)~$\Rightarrow$~(1) in Proposition~\ref{prop:GZ} is a consequence of Proposition~\ref{prop:McShane-Whitney} and the next lemma:

\begin{lemma}
Let $\omega\colon R^{\geq 0}\to R^{\geq 0}$ be a concave function. Then $\omega$ is increasing, and if in addition $\omega(0)=0$, then $\omega$ is subadditive.
\end{lemma}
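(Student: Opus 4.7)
The plan is to handle the two assertions separately, as they rely on different observations.

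For monotonicity, the strategy is to argue by contradiction: suppose $0 \leq s < t$ but $\eta := \omega(s) - \omega(t) > 0$. For any $u > t$, write $t = \frac{u-t}{u-s}\,s + \frac{t-s}{u-s}\,u$, and apply the concavity inequality to obtain
\[
(t-s)\,\omega(u) \;\leq\; (u-s)\,\omega(t) - (u-t)\,\omega(s) \;=\; -u\,\eta + t\,\omega(s) - s\,\omega(t).
\]
Since $R$ is an ordered field, I can pick $u \in R$ large enough that the right-hand side is strictly negative (concretely, any $u > \eta^{-1}\bigl(t\,\omega(s) - s\,\omega(t)\bigr)$ works, and such $u$ exists because ordered fields are unbounded). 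This forces $\omega(u) < 0$, contradicting $\omega \geq 0$. Hence $\omega(s) \leq \omega(t)$ whenever $s \leq t$.

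For subadditivity under the extra assumption $\omega(0)=0$, fix $s,t \geq 0$; the case $s+t = 0$ is trivial. Otherwise, observe that
\[
s = \tfrac{s}{s+t}(s+t) + \tfrac{t}{s+t}\cdot 0, \qquad t = \tfrac{t}{s+t}(s+t) + \tfrac{s}{s+t}\cdot 0,
\]
so concavity together with $\omega(0) = 0$ yields
\[
\omega(s) \geq \tfrac{s}{s+t}\,\omega(s+t), \qquad \omega(t) \geq \tfrac{t}{s+t}\,\omega(s+t).
\]
Adding these two inequalities gives $\omega(s) + \omega(t) \geq \omega(s+t)$, as desired.

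The only mildly subtle point is the monotonicity step, since over a general ordered field one cannot invoke a limit argument; the argument must produce a single finite $u \in R$ that already witnesses the contradiction, which is what the concrete bound above achieves. The subadditivity step is then a one-line consequence of concavity anchored at $0$.
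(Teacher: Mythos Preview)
Your proof is correct and follows essentially the same approach as the paper. For monotonicity, both you and the paper write $t$ as a convex combination of $s$ and a sufficiently large point, then use concavity together with $\omega\geq 0$ to derive a contradiction; the only difference is cosmetic (you parametrize by the large endpoint $u$, the paper by a weight $\lambda$). Your subadditivity argument is identical to the paper's.
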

\begin{proof}
Suppose that $s$, $t$ are positive elements of $R$ such that $s<t$ and $\omega(s)>\omega(t)$. Put $\Delta:=t-s$, and choose $\lambda$ with
$0<\lambda<1$ and $(1-\lambda)\omega(s) > \omega(t)$.
Then
we have, by concavity of $\omega$:
$$\omega(t)=\omega(s+\Delta) \geq \lambda\omega\left(s+\textstyle\frac{1}{\lambda}\Delta\right)+(1-\lambda)\omega(s)$$
and hence
$$\omega\left(s+\textstyle\frac{1}{\lambda}\Delta\right)\leq\textstyle
\frac{1}{\lambda}\left(\omega(t)-(1-\lambda)\omega(s)\right)<0,$$
a contradiction. Hence $\omega$ is subadditive. If $\omega(0)=0$, note that for $s,t>0$, by concavity  $\omega(s)\geq\frac{s}{s+t}\omega(s+t)$, and similarly for $t$ in place of $s$; now add.
\end{proof}

\begin{corollary}
If $f$ is bounded and uniformly continuous, then there is a definable uniformly continuous function on $R^m$ extending $f$. In particular, if $A$ is bounded and $f$ is continuous, then $f$ extends to a definable uniformly continuous function on $R^m$.
\end{corollary}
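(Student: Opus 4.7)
The plan is to obtain this as a direct application of Proposition~\ref{prop:GZ}: the key observation is that a bounded definable uniformly continuous function automatically has a constant---hence affine---modulus of continuity, so condition~(3) of that proposition is verified, and the equivalent condition~(1) then provides the desired extension.

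Concretely, for the first assertion I would pick $M\in R^{>0}$ with $|f(x)|\leq M$ for all $x\in A$. The triangle inequality then gives $|f(x)-f(y)|\leq 2M$ for all $x,y\in A$, so $\omega_f(t)\leq 2M$ for every $t\geq 0$. The constant function $\omega\colon R^{\geq 0}\to R^{\geq 0}$ given by $\omega(t)=2M$ is affine (take slope $0$ and intercept $2M$) and is a modulus of continuity of $f$, so condition~(3) of Proposition~\ref{prop:GZ} is verified; condition~(1) of that proposition then supplies a definable uniformly continuous $F\colon R^m\to R$ extending $f$.

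For the ``in particular'' clause I would reduce to the hypotheses of the first assertion. Suppose $A$ is bounded and $f$ is continuous; by the standing assumption of this subsection $A$ is also closed, hence closed and bounded. Lemma~\ref{lem:cbd implies uniform continuous} then upgrades continuity to uniform continuity, and Proposition~\ref{prop:cbd} shows that $f(A)$ is closed and bounded, so in particular $f$ is bounded. The hypotheses of the first assertion are therefore met, and it delivers the required extension. There is no substantive obstacle here: the corollary merely packages Proposition~\ref{prop:GZ} with the definable compactness results of Section~\ref{sec:Preliminaries}, the sole minor point being the observation that constants count as affine functions.
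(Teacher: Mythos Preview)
Your proposal is correct and follows essentially the same approach as the paper: bound $\omega_f$ by the constant $2M$ to verify condition~(3) of Proposition~\ref{prop:GZ}, then for the second part use Lemma~\ref{lem:cbd implies uniform continuous} to obtain uniform continuity. Your explicit invocation of Proposition~\ref{prop:cbd} to secure boundedness of $f$ in the second part is a detail the paper leaves implicit, but otherwise the arguments coincide.
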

\begin{proof}
If $M\in R$ is such that $\norm{f}\leq M$, then $\omega_f\leq 2M$. Hence the first statement follows from (3)~$\Rightarrow$~(1) in Proposition~\ref{prop:GZ}.
The second statement now follows from the first by Lemma~\ref{lem:cbd implies uniform continuous}.
\end{proof}

\begin{remarks}
Suppose $f$ is uniformly continuous.
\begin{enumerate}
\item If $B=[1,2]$, then the extension $F$ of $f$ to a function on $R^m$ defined as in the proof of Lemma~\ref{lem:Tietze} is also uniformly continuous. (This is shown for $R=\R$ in
\cite{Mandelkern}, and the proof given there goes through in general.)
\item If $A$ is convex, then $f$ extends to a definable uniformly continuous function on $R^m$ with the same modulus of continuity, cf.~Lemma~\ref{lem:uniform extension convex}. In \cite{Kleiner} it is shown that given a closed subset $S$ of $\R^m$, each uniformly continuous function on $S$ has an extension to a function on $\R^m$ with the same modulus of continuity if and only if $S$ is convex. 
\end{enumerate}
\end{remarks}

\bibliographystyle{amsplain} 

\providecommand{\bysame}{\leavevmode\hbox to3em{\hrulefill}\thinspace}

\end{document}